\newcommand{\CM}{Cohen-Macaulay}
\newcommand{\Fb}{\mathbb{F} }
\newcommand{\wrt}{with respect to}
\newcommand{\B}{\mathcal{B} }
\newcommand{\bx}{\mathbf{x} }
\newcommand{\bff}{\mathbf{f} }
\newcommand{\Ic}{\mathcal{I} }
\newcommand{\n}{\mathfrak{n} }
\newcommand{\m}{\mathfrak{m} }
\newcommand{\M}{\mathfrak{M} }
\newcommand{\q}{\mathfrak{q} }
\newcommand{\R}{\mathcal{R} }
\newcommand{\V}{\mathcal{V} }
\newcommand{\Zc}{\mathcal{Z} }
\newcommand{\F}{\mathbb{F} }
\newcommand{\Hc}{\mathcal{H} }
\newcommand{\Z}{\mathbb{Z} }
\newcommand{\Pb}{\mathbb{P} }
\newcommand{\rt}{\rightarrow}
\newcommand{\xar}{\longrightarrow}
\newcommand{\ov}{\overline}
\newcommand{\wh}{\widehat }
\newcommand{\wt}{\widetilde }
\newcommand{\image}{\operatorname{image}}
\newcommand{\Om}{\Omega }
\newcommand{\Tor}{\operatorname{Tor}}
\newcommand{\codim}{\operatorname{codim}}
\newcommand{\Ass}{\operatorname{Ass}}
\newcommand{\cone}{\operatorname{cone}}
\newcommand{\ann}{\operatorname{ann}}
\newcommand{\e}{\operatorname{end}}
\newcommand{\reg}{\operatorname{reg}}
\newcommand{\cx}{\operatorname{cx}}
\newcommand{\projdim}{\operatorname{projdim}}
\newcommand{\Sp}{\operatorname{span}}
\newcommand{\CMa}{\operatorname{CM}}
\newcommand{\CMS}{\operatorname{\underline{CM}}}
\newcommand{\thick}{\operatorname{thick}}
\newcommand{\Hom}{\operatorname{Hom}}
\newcommand{\Ext}{\operatorname{Ext}}
\theoremstyle{plain}
\newtheorem{theorem}{Theorem}[section]
\newtheorem{corollary}[theorem]{Corollary}
\newtheorem{lemma}[theorem]{Lemma}
\newtheorem{proposition}[theorem]{Proposition}
\theoremstyle{definition}
\newtheorem{remark}[theorem]{Remark}
\newtheorem{example}[theorem]{Example}
\theoremstyle{remark}
\begin{document}

\title[Ext functors]{Ext functors, support varieties and\\  Hilbert polynomials over complete intersection rings}
\author{Tony~J.~Puthenpurakal}
\date{\today}
\address{Department of Mathematics, IIT Bombay, Powai, Mumbai 400 076, India}

\email{tputhen@gmail.com}
\subjclass{Primary 13D07, 13D40; Secondary 13A30}
\keywords{Ext functors, complete intersections, Hilbert coefficients,  Blow-up modules, regularity of associated graded modules, syzygetically  Artin-Rees }

 \begin{abstract}
Let $(A,\m)$ be a  complete intersection of dimension $d \geq 1$ and codimension $c \geq 1$. Let $I$ be an $\m$-primary ideal and let $M$ be a finitely generated $A$-module.
For $i \geq 1$ let $\psi_i^I(M)$ be the degree of the polynomial type function $n \rt \ell(\Ext^i_A(M, A/I^n))$. We show that for $j = 0, 1$ and  for all $i \gg 0$ we have $\psi_{2i +j}^I(M)$ is a constant and let $r_0^I(M)$ and $r_1^I(M)$ denote these constant values. Set $r^I(M) = \max\{ r_0^I(M), r_1^I(M) \}$. We show that $r^I(M)$ is an invariant of $I, A$ and the support variety of $M$. We set the degree of the zero polynomial to be $-\infty$. If $r^I(M) \leq 0$ then we show that $\reg G_I(\Omega^i(M))$ for $i \geq 0$ is bounded.  We give an application of this result to syzgetic Artin-Rees property of $M$. We also give several examples which illustrate our results.
\end{abstract}
 \maketitle
\section{introduction}
Let $(A,\m)$ be a \CM \ local ring  of dimension $d \geq 1$ and let $I$ be an $\m$-primary ideal. If $N$ is an $A$-module of finite length then $\ell(N)$ denotes its length. Let $M$ be a finitely generated $A$-module.  It is known that for $i \geq 1$, the function $n \rt \ell(\Ext_A^i(M, A/I^{n}))$ is of polynomial type say of degree $\psi_i^I(M)$, see \cite[Corollary 4]{Theo}.
When  $A$ is Gorenstein and not regular, $I = \m$ and $M$ is a non-free maximal \CM \ (= MCM) $A$-module then $\psi_i^\m(M) = d -1$ for all $i \geq 1$, see \cite[4.2]{PZ}.  We set the degree of the zero-polynomial to be $-\infty$.
When  $A$ is Gorenstein, $I $ a parameter ideal and $M$ is a  MCM $A$-module then $\psi_i^I(M) =  -\infty$ for all $i \geq 1$, see \cite[2.6]{PZ}.

It is easy to see that there exists an infinite  subset $X \subseteq \mathbb{N}$ such that $\psi_i^I(M)$  is a constant if $i \in X$. However it is not clear which infinite set occurs. The situation improves when $A$ is a complete intersection. We prove
\begin{theorem}\label{deg-const}
Let $(A,\m)$ be a complete intersection of dimension $d$ and codimension $c$. Let $I$ be an $\m$-primary ideal and let $M$ be a finitely generated $A$-module. Then for $j = 0, 1$ and  for all $i \gg 0$ we have $\psi_{2i +j}^I(M)$ is a constant.
\end{theorem}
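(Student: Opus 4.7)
The plan is to realize the bigraded family $\Ext^i_A(M, A/I^{n+1})$ as a finitely generated bigraded module over a Noetherian bigraded ring that simultaneously encodes Gulliksen's cohomology operators (in the Ext direction) and the associated-graded structure (in the $I$-adic direction), and then to extract the asymptotics in $i$ via a Hilbert--Serre argument for bigraded modules.

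First, I would consider the total Ext into the associated graded ring,
\[
\mathcal{B}(M, I) := \bigoplus_{i, n} \Ext^i_A\bigl(M, I^n/I^{n+1}\bigr) = \bigoplus_{i} \Ext^i_A\bigl(M, G_I(A)\bigr),
\]
which is bigraded by $i$ (Ext) and $n$ (the $G_I(A)$-grading). The Gulliksen operators $\chi_1, \ldots, \chi_c$ of Ext-degree $2$ act by naturality in the second variable, and the $G_I(A)$-structure gives an action in the $n$-direction, so $\mathcal{B}(M, I)$ is a bigraded module over the Noetherian bigraded ring $G_I(A)[\chi_1, \ldots, \chi_c]$. Finite generation should follow from the fact that each $\Ext^i_A(M, G_I(A))$ is a finitely generated $G_I(A)$-module (being a subquotient of a finite free $G_I(A)$-module obtained by applying $\Hom_A(-, G_I(A))$ to a finite $A$-free resolution of $M$), combined with Gulliksen's argument, which only requires finite projective dimension of $M$ over the ambient regular local ring $Q$ in a presentation $A = Q/(f_1, \ldots, f_c)$ and hence gives a bound on the Ext-direction generators that is uniform in $n$.

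Next, I would pass from $\mathcal{B}(M, I)$ to $\mathcal{T}(M, I) := \bigoplus_{i, n} \Ext^i_A(M, A/I^{n+1})$ using the short exact sequences $0 \to I^{n+1}/I^{n+2} \to A/I^{n+2} \to A/I^{n+1} \to 0$ and their long exact Ext sequences; these assemble into an exact sequence of bigraded modules from which finite generation of $\mathcal{T}(M, I)$, or equivalently the appropriate rational form of its two-variable generating series, follows. By the Hilbert--Serre theorem for finitely generated bigraded modules over such a Noetherian bigraded algebra, the length function $H(i, n) := \ell(\Ext^i_A(M, A/I^{n+1}))$ agrees, for $i, n \gg 0$, with a quasi-polynomial in $(i, n)$ of period two in $i$ (from $\deg \chi_j = 2$) and period one in $n$. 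Separating by parity of $i$ gives polynomials $P_0(i, n), P_1(i, n) \in \mathbb{Q}[i, n]$ with $H(2i+j, n) = P_j(i,n)$ for $i, n$ large, and hence $\psi_{2i+j}^I(M) = \deg_n P_j$ is a constant for $i \gg 0$, which is the claim.

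The main obstacle is establishing the finite generation of $\mathcal{B}(M, I)$ as a bigraded module over $G_I(A)[\chi_1, \ldots, \chi_c]$: this step genuinely uses the complete intersection assumption and requires checking that Gulliksen's cohomology-operator argument produces a bound uniformly in the $n$-grading, not just for one fixed second argument. The remaining long-exact-sequence transfer and the Hilbert--Serre step are routine; the period two in $i$ coming from $\deg \chi_j = 2$ is precisely what produces the two separate constants $r_0^I(M)$ and $r_1^I(M)$ appearing in the abstract.
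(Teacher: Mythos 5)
Your overall strategy---package the family $\Ext^i_A(M,-)$ as a finitely generated bigraded module over a Noetherian ring carrying the degree-two cohomology operators, then read off stabilization of the $n$-degree along even and odd $i$---is the same as the paper's, and the final degree-extraction step is essentially sound (the paper does it more softly: since $\psi_i^I(M)=\dim_{\R}E_{i+1}-1$ for $E_i=\bigoplus_n\Ext^i_A(M,I^n)$ and $\R=A[It]$, it only needs a lemma saying that $\dim_{S_0}E_n$ is eventually constant for a finitely generated graded module over a standard graded $S_0$-algebra, applied to the even and odd Veronese parts, rather than a full bigraded Hilbert--Serre theorem over the non-Artinian base $\R$). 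Your module $\mathcal{B}(M,I)=\bigoplus_{i,n}\Ext^i_A(M,I^n/I^{n+1})$ is indeed finitely generated over $G_I(A)[\chi_1,\ldots,\chi_c]$ by the bigraded form of Gulliksen's theorem.

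The gap is the transfer step. The long exact sequences attached to $0 \rt I^{n+1}/I^{n+2} \rt A/I^{n+2} \rt A/I^{n+1} \rt 0$ place $\mathcal{T}(M,I)$ in an exact sequence in which $\mathcal{T}(M,I)$ occurs \emph{twice} (once shifted in $n$), and finite generation of the remaining term of such a sequence never forces finite generation of the other two; the prototype is $0 \rt G_I(A) \rt L^I(A) \rt L^I(A)(-1) \rt 0$, where $G_I(A)$ is a finitely generated $\R$-module but $L^I(A)=\bigoplus_n A/I^{n+1}$ is not. In fact your $\mathcal{T}(M,I)$ genuinely fails to be finitely generated in general: its row $i=0$ is $\bigoplus_n\Hom_A(M,A/I^{n+1})$, which for $M=A$ is exactly $L^I(A)$. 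So ``finite generation of $\mathcal{T}(M,I)$ follows'' is false as stated. The paper's route around this is to first discard small $i$ by replacing $M$ with a high syzygy, so that $M$ is MCM (harmless, since only $i\gg0$ matters), and then to use $\Ext^{\geq 1}_A(M,A)=0$ together with $0\rt I^n\rt A\rt A/I^n\rt 0$ to get $\Ext^i_A(M,A/I^n)\cong\Ext^{i+1}_A(M,I^n)$; the bigraded module $\bigoplus_{i,n\geq 1}\Ext^i_A(M,I^n)$ \emph{is} finitely generated over $\R[t_1,\ldots,t_c]$ because the Rees module $\bigoplus_n I^n$ is a finitely generated $\R$-module (this is \cite[1.1]{P}), and no further transfer is needed. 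If you insist on working with $G_I(A)$ you would instead have to control first differences in $n$ via the (finitely generated) images of the connecting maps, which is considerably more delicate than your sketch indicates.
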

\s\label{set}(with hypotheses as in \ref{deg-const})
For $j = 0, 1$ let $r_j^I(M) = \psi_{2i +j}^I(M)$ for $i \gg  0$. Set  $r^I_A(M) = \max\{ r_0^I(M), r_1^I(M) \}$.
If the ring $A$ is clear from the context then we drop the subscript $A$.
 When $A$ is a complete intersection there is a notion of support variety $\V_A(M)$ of a module $M$, see \cite{AB}. We show
\begin{theorem}\label{supp}
(with hypotheses as in \ref{set}) Let $N$ be another finitely generated $A$-module. If $\V_A(N) \subseteq \V_A(M)$ then $r^I(N) \leq r^I(M)$. In particular if $\V_A(N) = \V_A(M)$ then
$r^I(N) = r^I(M)$.
\end{theorem}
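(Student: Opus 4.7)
Our plan is to show that $r^I(-)$ is constant on any thick subcategory of the stable module category $\CMS(A)$, and then invoke the Avramov--Buchweitz classification of such thick subcategories by support varieties.

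\emph{Step 1 (formal properties of $r^I$).} (a) Since $\Ext^i_A(\Om M, L) \cong \Ext^{i+1}_A(M, L)$ for every $i \geq 1$ and every $A$-module $L$, we have $\psi_i^I(\Om M) = \psi_{i+1}^I(M)$ for $i \geq 1$; hence $r_0^I(\Om M) = r_1^I(M)$ and $r_1^I(\Om M) = r_0^I(M)$, so $r^I(\Om M) = r^I(M)$. (b) By additivity of $\Ext^i$ and of length, $r^I(M \oplus M') = \max\{r^I(M), r^I(M')\}$; in particular $r^I$ of a summand is at most $r^I$ of the whole. (c) Given a short exact sequence $0 \to L_1 \to L_2 \to L_3 \to 0$, the long exact sequence of $\Ext^*_A(-, A/I^n)$ together with additivity of length yields
$\ell(\Ext^i_A(L_k, A/I^n)) \leq \ell(\Ext^i_A(L_\alpha, A/I^n)) + \ell(\Ext^i_A(L_\beta, A/I^n))$ for every relabeling $\{\alpha, k, \beta\} = \{1,2,3\}$. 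Evaluating at $i = 2i'+j$ with $i' \gg 0$ gives $r_j^I(L_k) \leq \max\{r_j^I(L_\alpha), r_j^I(L_\beta)\}$ for $j = 0,1$, and hence the analogous inequality for $r^I$.

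\emph{Step 2 (thick subcategory and support variety).} Set $\mathcal{C}_s := \{ N : r^I(N) \leq s\}$, $N$ ranging over finitely generated $A$-modules. By Step 1, $\mathcal{C}_s$ is closed under direct sums, direct summands, extensions, and syzygies, and contains all free modules (on which $r^I = -\infty$); thus it corresponds to a thick subcategory of $\CMS(A)$. For $A$ a complete intersection, the Avramov--Buchweitz theory of support varieties \cite{AB} (together with its subsequent refinements) yields that $\V_A(N) \subseteq \V_A(M)$ forces $N$ to lie in the thick subcategory of $\CMS(A)$ generated by $M$. Taking $s = r^I(M)$, the subcategory $\mathcal{C}_s$ is thick and contains $M$, so it contains $N$, giving $r^I(N) \leq r^I(M)$. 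The ``in particular'' clause follows by applying this with $M$ and $N$ interchanged.

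\emph{Main obstacle.} The crux is the second assertion of Step 2: that $\V_A(N) \subseteq \V_A(M)$ actually forces $N$ into the thick hull of $M$ in $\CMS(A)$, in the generality of an arbitrary local complete intersection. If a direct reference in the desired form is unavailable, the fallback is to argue via the Eisenbud cohomology operators: the containment of annihilators in $S = A[t_1,\ldots,t_c]$ acting on $\bigoplus_i \Ext^i_A(-, k)$ implied by $\V_A(N) \subseteq \V_A(M)$ allows one to construct $N$ from $M$ by a bounded sequence of the operations in Step 1, which is exactly what is needed. A subsidiary point to verify is that there is no essential gap between ``thickness in $\CMS(A)$'' and the class of all finitely generated modules with $r^I \leq s$: this is handled by property (a), which says that replacing any module by a high enough syzygy neither changes $r^I$ nor changes the support variety.
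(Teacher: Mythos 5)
Your Step 1 is sound and coincides with the paper's own lemma on thick subcategories (syzygy-invariance of $r^I$, summands, and the two-out-of-three inequality along exact sequences); the minor imprecision that the long exact sequence shifts degree by one on one of the three terms only permutes $r_0^I$ and $r_1^I$ and does not affect $r^I=\max\{r_0^I,r_1^I\}$. The genuine gap is exactly the point you flag as the crux, and it cannot be repaired as stated: over a complete intersection of positive dimension, $\V_A(N)\subseteq\V_A(M)$ does \emph{not} force $N\in\thick(M)$ in $\CMS(A)$. The classification of thick subcategories of $\CMS(A)$ by the cones $\V_A(-)\subseteq\wt{k}^c$ is an Artinian phenomenon; when $\dim A>0$ the thick subcategories are governed by support data spread over the whole singular locus of $\operatorname{Spec}(A)$, of which $\V_A(-)$ records only the fiber over the closed point. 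Concretely, for $A=k[[x,y,z]]/(xy)$ take $M=\Omega^2(k)$ and $N=A/(x)$: both are non-free MCM modules over a hypersurface, so $\V_A(N)=\V_A(M)=\wt{k}$; yet localizing at the generic point $\eta=(x,y)$ of the singular locus kills every object of $\thick(\Omega^2(k))$ in the singularity category of $A_\eta$, while $(A/(x))_\eta$ is not perfect over $A_\eta$, so $N\notin\thick(M)$. Your fallback via annihilators of $\Ext^*_A(-,k)$ over $k[t_1,\ldots,t_c]$ hits the same wall, since that annihilator is precisely the data of $\V_A(-)$.

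The theorem is nevertheless true, and the paper's proof shows what is missing: one never places $N$ itself in $\thick(M)$. Instead one reduces modulo a maximal regular sequence $\bx$ to the Artinian ring $B=A/(\bx)$, where $\V_B(N/\bx N)\subseteq\V_B(M/\bx M)$ does give $N/\bx N\in\thick_B(M/\bx M)$ (Carlson--Iyengar), and then transports back with the triangulated MCM-approximation functor $\CMS(B)\to\CMS(A)$: the approximation $X_N$ of $N/\bx N$ lands in $\thick_A(X_M)\subseteq\thick_A(M)$, because $X_M$ is an iterated cone of multiplications by the $x_i$ on $M$. Step 1 then yields $r^I(X_N)\le r^I(M)$, but one still needs the non-formal fact that $r^I(X_N)=r^I(N)$; the paper proves this via the long exact sequence attached to $0\to N\xrightarrow{\;x\;}N\to N/xN\to 0$ and a dimension count for multiplication by a nilpotent element on the finitely generated Rees-module $\bigoplus_n\Ext^i_A(N,A/I^n)$. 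These three ingredients --- Artinian reduction, the triangulated MCM-approximation functor, and the invariance of $r^I$ under MCM approximation of $N/\bx N$ --- are absent from your argument, and none of them is routine.
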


Next we consider the case when $r^I(M) = -\infty$. Let $G_I(A) = \bigoplus_{n \geq }I^n/I^{n+1}$ be the associated graded ring of $A$ \wrt \ $I$. Let  $G_I(M) = \bigoplus_{n \geq }I^nM/I^{n+1}M$ be the associated graded module of $M$ \wrt \ $I$ (considered as a $G_I(A)$-module). Let $\Omega^i(M)$ denote the $i^{th}$-syzygy of $M$ (for $i \geq 0$).
We show
\begin{theorem}\label{special}
(with hypotheses as above.) Let $r_I(M) = -\infty$. Also assume $M$ is a MCM $A$-module. Then there exists $\beta$ (depending only on $I$ and $M$) such that $\reg G_I(\Omega^i(M)) \leq \beta$ for all $i \geq 0$.
\end{theorem}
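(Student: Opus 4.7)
The strategy is to upgrade the asymptotic vanishing encoded in the hypothesis $r^I(M) = -\infty$ to a uniform vanishing of $\Ext^i_A(M, A/I^n)$ in both indices simultaneously, and then to convert this into a uniform regularity bound on $G_I(\Omega^i(M))$ via the blow-up module machinery.

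First, I would assemble the total Ext module
\[
\mathcal{E}^I(M) \;=\; \bigoplus_{i \geq 0}\bigoplus_{n \geq 0} \Ext^i_A\bigl(M,\, A/I^{n+1}\bigr),
\]
equipped with its natural bigraded action: the Eisenbud operators $\chi_1, \ldots, \chi_c$ (of cohomological degree $2$) coming from a complete intersection presentation $A = Q/(\bff)$, and a Rees-type action in the $n$-direction obtained by identifying the $n$-th piece with $\Ext^i_A\bigl(M, L^I(A)\bigr)_n$, where $L^I(A) = \bigoplus_n A/I^{n+1}$ is regarded as a module over $\R(I)$. Finite generation of $\mathcal{E}^I(M)$ in this bigraded sense, combined with Theorem \ref{deg-const} and the assumption $r^I(M) = -\infty$, should yield integers $i_0,\, n_0$ with
\[
\Ext^i_A(M, A/I^n) = 0 \qquad \text{for all } i \geq i_0 \text{ and all } n \geq n_0.
\]
By the shift isomorphism $\Ext^{i+k}_A(M, -) \cong \Ext^i_A(\Omega^k(M), -)$, this rephrases as: for every $k \geq i_0$, the MCM module $N_k := \Omega^k(M)$ satisfies $\Ext^j_A(N_k, A/I^n) = 0$ for all $j \geq 1$ and all $n \geq n_0$.

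Next, I would translate this uniform Ext-vanishing into a uniform regularity bound via the blow-up module $L^I(N_k) = \bigoplus_{n \geq 0} N_k/I^{n+1}N_k$ over $\R(I)$, together with the standard Sancho de Salas type exact sequences linking $L^I(N_k)(-1)$, $L^I(N_k)$, $N_k$ and $G_I(N_k)$. Since $N_k$ is MCM, the top graded local cohomology $H^d_{\R(I)_+}(L^I(N_k))$ is well-controlled, and the vanishing of $\Ext^j_A(N_k, A/I^n)$ for $n \geq n_0$ (which computes the relevant pieces of these local cohomologies) forces the graded components of $H^*_{G_I(A)_+}(G_I(N_k))$ to vanish above a degree depending only on $n_0$ and on invariants of $A$ and $I$. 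This yields a single $\beta'$ with $\reg G_I(\Omega^k(M)) \leq \beta'$ for all $k \geq i_0$. For the finitely many $k < i_0$, each $\Omega^k(M)$ is a fixed MCM module with finite regularity, so
\[
\beta := \max\bigl\{\beta',\; \reg G_I(\Omega^0(M)),\, \ldots,\, \reg G_I(\Omega^{i_0-1}(M))\bigr\}
\]
gives the required bound.

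The main obstacle is producing the regularity bound uniformly in $k$. A priori the graded pieces of $G_I(\Omega^k(M))$ can vary wildly with $k$, and the crux is that the single constant $n_0$ coming from finite generation of $\mathcal{E}^I(M)$ sees the entire syzygy chain at once, rather than degrading as $k$ grows. Making this compatibility precise, ensuring that no hidden $k$-dependence creeps in through auxiliary data such as Hilbert coefficients of $\Omega^k(M)$, and identifying the exact duality that converts Ext-vanishing into vanishing of local cohomology of $L^I(N_k)$, is the technical heart of the proof.
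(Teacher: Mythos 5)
Your first step is sound and matches the paper: the hypothesis $r^I(M)=-\infty$ does upgrade to uniform vanishing $\Ext^i_A(M,A/I^n)=0$ for all $i\geq i_0$ and $n\geq n_0$ (this is (i)$\implies$(iii) of Theorem \ref{equi}, proved via finite generation of the bigraded Ext module over $\R[t_1,\ldots,t_c]$ and asymptotic stability of associated primes). The gap is in your second step. There is no duality or identification under which $\Ext^j_A(N_k, A/I^n)$ "computes the relevant pieces" of $H^*_{\M}(L^I(N_k))$ or of $H^*(G_I(N_k))$; the local cohomology of $L^I(N_k)$ involves Ratliff--Rush closures and $H^i_\m(N_k/I^{n+1}N_k)$, not Ext modules into $A/I^n$, and the vanishing of those Ext modules for $n\geq n_0$ says nothing about the degrees in which $H^i(L^I(N_k))$ is nonzero. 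You have correctly located the technical heart of the argument, but the mechanism you propose for it does not exist, so the uniform bound $\beta'$ is asserted rather than proved. A priori $\reg G_I(\Omega^k(M))$ could still grow with $k$ through the Hilbert data of the syzygies, exactly the danger you flag.

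What the Ext/Tor vanishing actually buys in the paper is much more modest: by \ref{equi}(v) it gives $\Tor^A_1(\Omega^k(M),A/I^n)=0$, so that the short exact sequences $0\to K_i\to M_{i+2}\to M_i\to 0$ produced by the complexity-reduction device (Theorem \ref{red-cx-thm}, using a filter-regular Eisenbud operator to get surjections $\F_{i+2}\to\F_i$) stay exact after applying $L^J(-)$. The uniform bound then comes from a double induction: on $\cx M$ (the base cases $\cx M\leq 1$ being trivial by eventual periodicity, and the inductive step iterating $\reg L^J(M_{j+2})\leq\max\{\alpha,\reg L^J(M_j)\}$), and on $\dim A$ via a superficial element $x$ good for all syzygies simultaneously, which controls $\reg^{\geq 1}L^I(\Omega^i(M))$ by \ref{reg1}; the remaining $H^0$-term is handled by passing to a high Veronese $I^m$ and using the Ratliff--Rush surjections of \ref{h0}. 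None of these ingredients can be bypassed by the single finiteness constant $n_0$, so as written your argument does not close.
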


In the  section  fifteen we give bountiful examples of modules and ideals satisfying $r_I(M) = -\infty$.

It can be shown that $\V_A(I^n)$ is constant for $n \gg 0$, see \ref{as-ideal}. Set this constant value as $\V_A^\infty(I)$. In \ref{equi} we prove that $r_I(M) = - \infty$ if and only if
$\V_A(M)\cap\V_A^\infty(I) = \{ 0 \}$. The next case to consider is when $\dim \V_A(M) \cap \V_A^\infty(I) = 1$.
We show
\begin{theorem}\label{special-cx1}
(with hypotheses as above.)  Also assume $M$ is a MCM $A$-module. Assume $\dim \V_A(M) \cap \V_A^\infty(I) = 1$. Then there exists $\beta$ (depending only on $I$ and $M$) such that $\reg G_I(\Omega^i(M)) \leq \beta$ for all $i \geq 0$.
\end{theorem}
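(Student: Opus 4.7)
My plan is to extend the proof of Theorem \ref{special} by handling the extra dimension in the intersection $\V_A(M) \cap \V_A^\infty(I)$.

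First I would establish the quantitative link between this intersection and the Ext growth: under the dimension-$1$ hypothesis, one should have $r^I(M) \leq 0$, extending the equivalence \ref{equi} (in which dimension $0$ corresponds to $r^I(M) = -\infty$). The proof would pass through the bigraded Ext module $E = \bigoplus_{i,n} \Ext_A^i(M, A/I^n)$, viewed as a finitely generated module over $S \otimes_k G_I(A)$, where $S = k[\chi_1,\dots,\chi_c]$ is the polynomial ring acting via the Eisenbud cohomology operators. The support of $E$ on the $S$-side lies in $\V_A(M)$ and on the $G_I(A)$-side reflects $\V_A^\infty(I)$, so a bigraded Hilbert-series computation would bound the eventual degree $r^I(M)$ by $\dim(\V_A(M) \cap \V_A^\infty(I)) - 1$.

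Next I would reduce to Theorem \ref{special} by a cut-by-hyperplane argument. Choose a generic linear form $\chi \in S_2$ whose vanishing locus meets $\V_A(M) \cap \V_A^\infty(I)$ only at the origin. After a suitable change of generators of $A$ as a complete intersection $A = Q/(f_1,\dots,f_c)$, realize $\chi$ as the cohomology operator associated with $f_c$ and pass to the auxiliary complete intersection $\tilde A = Q/(f_1,\dots,f_{c-1})$ of codimension $c-1$. By construction, $\V_{\tilde A}(M) \cap \V_{\tilde A}^\infty(I \tilde A)$ is zero-dimensional, so Theorem \ref{special} applied over $\tilde A$ yields a uniform bound on $\reg G_{I \tilde A}(\Omega_{\tilde A}^i M)$.

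The main obstacle --- and where the real work lies --- is descending this bound from $\tilde A$ to $A$. Syzygies over $A$ and $\tilde A$ are connected by Gulliksen's long exact sequence, so each $\Omega_A^i M$ fits into short exact sequences of $\tilde A$-modules whose outer terms are known syzygies over $\tilde A$. Combining these with the eventual $2$-periodicity of syzygies over complete intersections (modulo free summands) and with Artin-Rees-type comparisons of $G_I(-)$ versus $G_{I \tilde A}(-)$ would yield a regularity bound on $G_I(\Omega_A^i M)$. The delicate point is making this bound uniform in $i$: this relies on $r^I(M) \leq 0$ (so that the Ext contribution at each syzygy level is eventually of bounded length) together with the finiteness properties established in Theorems~\ref{deg-const} and~\ref{supp}.
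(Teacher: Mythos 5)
Your guiding idea --- cut $\V_A(M)\cap\V_A^\infty(I)$ with a generic cohomology operator so that the reduced situation falls under Theorem \ref{special} --- is indeed the key idea, but your implementation via the intermediate complete intersection $\tilde A=Q/(f_1,\dots,f_{c-1})$ leaves the actual theorem unproved. The statement is about $\reg G_I(\Omega_A^i(M))$, i.e.\ about syzygies over $A$, and your descent step from $\tilde A$ to $A$ is exactly where the content lies; the tools you name there do not work. "Eventual $2$-periodicity of syzygies modulo free summands" holds only when $\cx_A M\leq 1$, whereas here $\cx_A M$ can be as large as $c$; and Gulliksen's long exact sequence compares Ext over $A$ and $\tilde A$, not the associated graded modules $G_I(\Omega_A^i(M))$, so no Artin--Rees-type comparison of $G_I(-)$ along a change of rings is available. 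Your opening step is also shaky: $\psi_i^I(M)$ measures growth in $n$ (the Rees-algebra direction), while $\dim\V_A(M)\cap\V_A^\infty(I)$ measures growth in $i$ (the operator direction), so the asserted bound $r^I(M)\leq\dim(\V_A(M)\cap\V_A^\infty(I))-1$ does not follow from a routine bigraded Hilbert-series count; in any case it is not what is needed.

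The paper avoids the change of rings entirely by using the generic operator $u$ as a \emph{chain map on the minimal $A$-free resolution of $M$} (Theorem \ref{red-cx-thm}): for $i\gg 0$ the induced maps $\F_{i+2}\to\F_i$ are surjective, giving short exact sequences of $A$-modules $0\rt D_i\rt \Omega^{i+2}(M)\rt\Omega^i(M)\rt 0$ with $\cx_A D_i=\cx_A M-1$, $D_{i+1}=\Omega^1(D_i)$, and --- by genericity of $u$ --- $\V_A(D_i)\cap\V_A^\infty(I)=\{0\}$. Then Theorem \ref{first} (the $r^I=-\infty$ case) bounds $\reg L^{I^m}(D_i)$ uniformly; the vanishing of the relevant Tor's makes $0\rt L^{I^m}(D_i)\rt L^{I^m}(\Omega^{i+2}M)\rt L^{I^m}(\Omega^i M)\rt 0$ exact, and iterating bounds $\reg L^{I^m}(\Omega^i M)$ for a fixed large power $I^m$. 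Finally, a separate induction on $\dim A$ with a superficial element (Lemma \ref{mod-cx1}, which uses the eventual $2$-periodicity of $\Ext_A^*(M,A/I^n)$ in the cohomological degree to preserve the hypothesis $\dim\V\cap\V^\infty\leq 1$ modulo $x$) transfers the bound from $I^m$ back to $I$ --- a step your proposal omits. To repair your argument you would either have to supply the missing $G_I$-comparison across the ring change, or switch to the paper's in-ring construction of the complexity-reducing exact sequences.
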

We need Theorem \ref{special} to prove Theorem \ref{special-cx1}.
An easy corollary to Theorems \ref{special} and \ref{special-cx1} is
\begin{corollary}\label{cx1}
(with hypotheses as above.)  Assume $\dim \V_A^\infty(I) \leq 1$. Also assume $M$ is a MCM $A$-module.  Then there exists $\beta$ (depending only on $I$ and $M$) such that $\reg G_I(\Omega^i(M)) \leq \beta$ for all $i \geq 0$.
\end{corollary}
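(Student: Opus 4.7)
The plan is to reduce Corollary \ref{cx1} to a case analysis using Theorems \ref{special} and \ref{special-cx1}, leveraging the fact that $\V_A(M) \cap \V_A^\infty(I) \subseteq \V_A^\infty(I)$. Since support varieties in the sense of \cite{AB} are cones through the origin, the hypothesis $\dim \V_A^\infty(I) \leq 1$ forces $\dim(\V_A(M) \cap \V_A^\infty(I)) \in \{0, 1\}$ (or the intersection is empty, which cannot happen as $0$ lies in every support variety). Thus exactly two cases arise.

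In the first case, $\dim(\V_A(M) \cap \V_A^\infty(I)) = 0$. Because the intersection is a cone containing the origin, this is equivalent to $\V_A(M) \cap \V_A^\infty(I) = \{0\}$. By the characterization \ref{equi} quoted in the introduction, this is exactly the condition $r^I(M) = -\infty$. Hence Theorem \ref{special} applies directly and provides a $\beta$ with $\reg G_I(\Omega^i(M)) \leq \beta$ for all $i \geq 0$.

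In the second case, $\dim(\V_A(M) \cap \V_A^\infty(I)) = 1$, and Theorem \ref{special-cx1} applies verbatim to furnish the required constant $\beta$. Taking the maximum of the two bounds if one wants to state the corollary uniformly handles both possibilities, but logically only one case actually occurs for a given $M$.

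The only step that requires any thought is justifying why the dimension must drop to a value already covered by one of the two earlier theorems: this is simply the observation that $\V_A(M) \cap \V_A^\infty(I)$ is a closed cone, combined with monotonicity of dimension under the inclusion $\V_A(M) \cap \V_A^\infty(I) \subseteq \V_A^\infty(I)$. All the substantive work is absorbed into Theorems \ref{special} and \ref{special-cx1} and the equivalence \ref{equi}; no additional machinery is needed, so the proof will be essentially a two-line deduction.
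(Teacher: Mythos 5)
Your proof is correct and is exactly the argument the paper intends: since $\V_A(M)\cap\V_A^\infty(I)$ is a cone contained in $\V_A^\infty(I)$, it is either $\{0\}$ (handled by Theorem \ref{equi} plus Theorem \ref{special}) or one-dimensional (handled by Theorem \ref{special-cx1}). The paper states the corollary as an immediate consequence of those two theorems without further proof, so there is nothing to add.
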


Set $\V^{tot}_A(I) = \bigcup_{n \geq 1}\V_A(I^n)$. As $\V_A(I^n)$ is constant for $n \gg 0$, it follows that $\V^{tot}_A(I)$ is an algebraic variety. We show
\begin{theorem}\label{cx-mcm}
(with hypotheses as above.) There exists a number $\theta$ (depending only on $I$) such that if $M$ is any  MCM $A$-module with $\V_A(M) \cap \V_A^{tot}(I) = \{ 0 \}$  then
$\reg G_I(M) \leq \theta$.
\end{theorem}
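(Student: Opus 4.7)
The hypothesis $\V_A(M) \cap \V_A^{tot}(I) = \{0\}$ forces $\V_A(M) \cap \V_A^\infty(I) = \{0\}$, hence $r^I(M) = -\infty$ by Proposition \ref{equi}, and Theorem \ref{special} produces a bound $\beta = \beta(I,M)$ with $\reg G_I(M) \leq \beta$. The content of the present theorem is to absorb the $M$-dependence of $\beta$ into $I$ alone.

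Since $\V_A(I^n)$ stabilizes at some $N_0$ depending only on $I$, one has $\V_A^{tot}(I) = \bigcup_{n=1}^{N_0} \V_A(I^n)$, and the hypothesis is equivalent to the finite system $\V_A(M) \cap \V_A(I^n) = \{0\}$ for $n = 1, \ldots, N_0$. Because $A$ is Gorenstein and $M$ is MCM, $\Ext^i_A(M, A) = 0$ for $i \geq 1$; the sequence $0 \to I^n \to A \to A/I^n \to 0$ then gives $\Ext^i_A(M, A/I^n) \cong \Ext^{i+1}_A(M, I^n)$ for $i \geq 1$. The crux is to exhibit an integer $s(I)$, depending only on $I$, such that $\Ext^i_A(M, I^n) = 0$ for every $i > s(I)$ and every $1 \leq n \leq N_0$, whenever $M$ is MCM with $\V_A(M) \cap \V_A^{tot}(I) = \{0\}$.

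For this step I would study $\bigoplus_i \Ext^i_A(M, I^n)$ as a finitely generated module over the cohomology-operator ring $A[\chi_1, \ldots, \chi_c]$: its support has dimension $\dim \V_A(M, I^n) = 0$, so it is annihilated in high cohomological degree, and the vanishing threshold is controlled by its graded regularity. That regularity is in turn governed by the top Betti numbers of a minimal $A$-free resolution of $I^n$ in a bounded range; since resolutions over a \CM \ CI become eventually periodic with invariants depending only on $A$, and only the finitely many values $n = 1, \ldots, N_0$ intervene, these data can be bounded by a single $s(I)$. Feeding $s(I)$ back into the mechanism of Theorem \ref{special} (whose conclusion applied to all syzygies of $M$ was indexed by the $M$-dependent eventual vanishing of $\Ext^i_A(\Omega^j(M), A/I^n)$) upgrades $\beta(I, M)$ to a number $\theta(I)$.

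The principal obstacle is precisely this uniformity step: generic support-variety vanishing asserts $\Ext^i_A(M, N) = 0$ for $i \gg 0$ when $\V_A(M, N) = \{0\}$, but with a threshold that depends on both $M$ and $N$. Circumventing this requires leveraging the fact that the ``$N$-side'' of the pair ranges only over the $I$-controlled finite family $\{I, I^2, \ldots, I^{N_0}\}$ to separate the $M$- and $I$-contributions to the Ext-regularity; any argument that fails to use the finiteness of $\V_A^{tot}(I)$ (rather than merely the eventual stability $\V_A^\infty(I)$) cannot deliver a bound depending on $I$ alone.
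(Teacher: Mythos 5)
Your plan diverges from the paper's proof and, as written, has a genuine gap at its central step. The paper does not try to make the bound of Theorem \ref{special} uniform at all. Instead it runs a short cosyzygy argument: for $i\le 0$ one has exact sequences $0\rt \Om^{i}(M)\rt H_i\rt \Om^{i-1}(M)\rt 0$ with $H_i$ free, and each cosyzygy still satisfies the hypothesis; since $\V_A(\Om^{i-1}(M))\cap\V_A(A/I^n)=\{0\}$ for \emph{every} $n\ge 1$ (this is exactly where $\V^{tot}_A(I)$ rather than $\V^\infty_A(I)$ is used, via \ref{equi}(v)), one gets $\Tor^A_1(\Om^{i-1}(M),A/I^n)=0$ for all $n$, hence short exact sequences $0\rt L^I(\Om^i(M))\rt L^I(H_i)\rt L^I(\Om^{i-1}(M))\rt 0$. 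Because the middle terms are free, iterating the local cohomology sequence $d-1$ steps down the cosyzygy chain gives $b_j(\Om^i(M))\le\max\{b_j(A),\dots,b_0(A)\}$, so $\reg L^I(M)\le d-1+\max_j b^I_j(A)$, a quantity depending only on $I$. The uniformity in $M$ is automatic; no regularity of Ext-modules over the cohomology operators is needed.

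The gap in your proposal is the asserted integer $s(I)$, depending only on $I$, with $\Ext^i_A(M,I^n)=0$ for all $i>s(I)$ and all admissible $M$. No such threshold can exist in the generality you state it: replacing $M$ by a cosyzygy $\Om^{-j}(M)$ preserves both MCM-ness and the support variety, yet $\Ext^{i}_A(\Om^{-j}(M),I^n)\cong\Ext^{i-j}_A(M,I^n)$ for $i>j$, so any nonvanishing Ext in low degree can be translated past any proposed $s(I)$. (If in fact all these Ext modules vanish for $i\ge 1$ under the hypothesis, that would itself require a rigidity argument you do not supply.) The justification you offer --- that the vanishing threshold is ``governed by the top Betti numbers of a minimal resolution of $I^n$'' and that resolutions over a complete intersection ``become eventually periodic'' --- is incorrect: periodicity fails for complexity $\ge 2$, and the degree at which $\Ext^*_A(M,I^n)$ vanishes depends on $M$, not only on $I^n$. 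Finally, even granting $s(I)$, the step ``feed $s(I)$ back into the mechanism of Theorem \ref{special}'' is not carried out: the bound produced there is $\max\{\alpha,\reg L^{J}(M_{j_0}),\reg L^{J}(M_{j_0+1})\}$ for $M$-dependent syzygies $M_{j_0}$, with further $M$-dependent choices in the inductions on complexity and dimension, and none of these are controlled by an Ext-vanishing threshold alone. Your closing observation that the full strength of $\V^{tot}_A(I)$ must be used is correct, but the place it is actually needed is the $\Tor_1$-vanishing that makes the $L^I$-sequences of cosyzygies exact, not a uniform Ext-vanishing degree.
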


Application:
The motivation for our application is a “uniform Artin–Rees” problem raised by Eisenbud and Huneke in \cite{EHu}, concerning uniform bounds of Artin–Rees type on free resolutions.
Let $M$ be a finitely generated $A$-module and let $(\Fb, \partial_{\bullet})$ be a  free
resolution of $M$ (not necessarily minimal).  We say $M$ is said to be \emph{ syzygetically Artin–Rees} with respect to
a family of ideals $\Ic$ if there exists a uniform integer $h$ such that for every $n \geq h$, for every $i \geq 0$ and for every  $I \in \Ic$,
\begin{equation}\label{sy-1}
  I^n\Fb_i \cap \image \partial_{i+1} \subseteq I^{n-h}\image \partial_{i+1}.
\end{equation}
This definition does not depend on the choice of resolution, see \cite[2.1]{S}. In \cite{EHu}, the authors show that given a Noetherian local ring $(A,\m)$, a module $M$ is syzygetically Artin–Rees with respect to an ideal $I$ under the hypotheses that $M$ has finite projective dimension and constant rank on the punctured spectrum. In a remarkable paper
\cite{AHS} it is shown that one can find a uniform $h$ for \emph{any} $d$-syzygy $A$-module $M$ and \emph{any} ideal $I$ of $A$ (here $d = \dim A$).

In this paper we study the following notion: Let $M$ be a finitely generated $A$-module and let $(\Fb, \partial_{\bullet})$ be a \emph{minimal} free
resolution of $M$.  We say $M$ is said to be \emph{ strongly syzygetically Artin–Rees} with respect to
an ideal $I$ if there exists an integer $h$ such that for every $n \geq h$ and for every $i \geq 0$,
\begin{equation}\label{sy-2}
  I^n\Fb_i \cap \image \partial_{i+1} =  I^{n-h}(I^h\Fb^i \cap \image \partial_{i+1}).
\end{equation}
It is not difficult to show that this definition does not depend on the choice of minimal resolution.

 Our question is trivial if $A$ is regular local.
 We prove the following general result on strong uniform Artin-Rees for syzygies.
\begin{theorem}\label{sunif}
Let $(A,\m)$ be a \CM \ local ring and let $I$ be an $\m$-primary ideal. Let $M$ be a finitely generated $A$-module and let $(\Fb_M, \partial_{\bullet})$ be a minimal free
resolution of $M$.  Assume $\{ \reg G_I(\Om^i(M)) \}_{ i \geq 0}$ is bounded. Then there is a positive number $h$ depending only on $I$ and $M$ such that
$I^n\Fb^i_M \cap \image \partial_{i+1} = I^{n-h}(I^h\Fb^i_M \cap \image \partial_{i+1})$ for all integers $n \geq h$ and for all $i \geq 0$.
\end{theorem}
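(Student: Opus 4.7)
The plan is to identify $\image \partial_{i+1}$ with $\Omega^{i+1}(M)$ sitting inside $\Fb^i_M$, to set $N_{i,n} := I^n\Fb^i_M \cap \Omega^{i+1}(M)$, and to reformulate the desired equation as the statement that the graded $\R_I(A)$-module $\R_i^* := \bigoplus_{n \geq 0} N_{i,n}$ (finitely generated by Artin--Rees) is generated in degrees $\leq h$ over the Rees algebra $\R_I(A) := \bigoplus_{n \geq 0}I^n$. This reduces the theorem to producing a uniform bound, independent of $i$, on the generating degree of $\R_i^*$.

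The first main step is to bound the regularity of the associated graded module $G^*(\Omega^{i+1}(M)) := \bigoplus_n N_{i,n}/N_{i,n+1}$ coming from the induced filtration. Starting from the short exact sequence $0 \to \Omega^{i+1}(M) \to \Fb^i_M \to \Omega^i(M) \to 0$ (with $\Omega^0(M) := M$), the induced filtration on the submodule together with the $I$-adic filtration on $\Fb^i_M$ yields the $I$-adic filtration as the quotient filtration on $\Omega^i(M)$, so passing to associated graded modules produces an exact sequence of graded $G_I(A)$-modules
\[
0 \to G^*(\Omega^{i+1}(M)) \to G_I(\Fb^i_M) \to G_I(\Omega^i(M)) \to 0.
\]
Because $G_I(\Fb^i_M)$ is free of regularity zero and $\reg G_I(\Omega^i(M)) \leq \beta$ uniformly in $i$ by hypothesis, the standard regularity bound for a short exact sequence gives $\reg G^*(\Omega^{i+1}(M)) \leq \beta + 1$.

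The second main step passes from regularity of the associated graded to generation of the Rees module. Consider the inclusion-shift short exact sequence of graded $\R_I(A)$-modules
\[
0 \to \R_i^*(+1) \to \R_i^* \to G^*(\Omega^{i+1}(M)) \to 0,
\]
in which the left map is, in degree $n$, the inclusion $N_{i,n+1} \hookrightarrow N_{i,n}$. Since $\reg \R_i^*(+1) = \reg \R_i^* - 1$ and $\reg \R_i^*$ is finite, the inequality $\reg \R_i^* \leq \max(\reg \R_i^* - 1,\ \reg G^*(\Omega^{i+1}(M)))$ forces $\reg_{\R_I(A)} \R_i^* \leq \reg_{G_I(A)} G^*(\Omega^{i+1}(M)) \leq \beta + 1$. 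Hence $\R_i^*$ is generated as an $\R_I(A)$-module in degrees $\leq \beta + 1$, which reads exactly as $N_{i,n} = IN_{i,n-1}$ for all $n > \beta+1$. Iterating, $N_{i,n} = I^{n-h}N_{i,h}$ for $n \geq h := \beta + 1$, as required.

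The main obstacle is the regularity comparison in the second step: one must identify the local cohomology of $G^*(\Omega^{i+1}(M))$ computed over $\R_I(A)$ with that computed over $G_I(A)$. This identification holds because the $\R_I(A)$-action on $G^*(\Omega^{i+1}(M))$ factors through the surjection $\R_I(A) \twoheadrightarrow G_I(A) = \R_I(A)/I\R_I(A)$, so the irrelevant ideals $\R_I(A)_+$ and $G_I(A)_+$ induce the same local cohomology functors on $G^*(\Omega^{i+1}(M))$. The remaining reductions --- from the strong Artin--Rees equation to generation of $\R_i^*$, and from generation back to the iterated equation via graded Nakayama over the graded-local ring $\R_I(A)$ --- are routine.
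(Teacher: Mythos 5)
Your proof is correct, and its first (and essential) step coincides with the paper's: identify $\image\partial_{i+1}$ with $\Omega^{i+1}(M)$, put the induced filtration $N_{i,n}=I^n\Fb_i\cap\Omega^{i+1}(M)$ on it, and use the exact sequence $0\rt G^*(\Omega^{i+1}(M))\rt G_I(\Fb_i)\rt G_I(\Omega^i(M))\rt 0$ together with the hypothesis to get a bound on $\reg G^*(\Omega^{i+1}(M))$ that is uniform in $i$. Where you diverge is in converting that regularity bound into the generation statement $N_{i,n}=IN_{i,n-1}$ for $n>h$: the paper quotients $G^*$ by a filter-regular sequence, invokes \cite[18.3.11]{BS} to get $N_n=IN_{n-1}+N_{n+1}$ for $n>h$, and then iterates, killing the tail term $N_r$ via the (a priori $i$-dependent) Artin--Rees stabilization $N_r=IN_{r-1}$ for $r\gg0$; you instead use the inclusion-shift sequence $0\rt \R_i^*(+1)\rt\R_i^*\rt G^*\rt 0$ over the Rees algebra, so that finiteness of $\reg\R_i^*$ forces $\reg\R_i^*\leq\reg G^*$ and hence bounds the generation degree directly. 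Your route is arguably cleaner, avoiding the iteration and the appeal to the non-uniform Artin--Rees constant (which the paper only uses to discard the tail, so both arguments are sound); it does rely on the standard facts that regularity over a standard graded ring with local base is finite, bounds generation degree, and is insensitive to computing local cohomology over $\R_I(A)$ versus $G_I(A)$, all of which you correctly flag. One small slip: $G_I(\Fb_i)$ is free over $G_I(A)$ generated in degree $0$, but its regularity is $\reg G_I(A)$, not necessarily $0$; since this is still independent of $i$, your uniform bound survives with $h=\max\{\reg G_I(A),\beta+1\}$ in place of $\beta+1$.
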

Our results Theorems \ref{special}, \ref{special-cx1}, \ref{cx1} and \ref{cx-mcm} give bountiful examples of modules $M$ satisfying $\{ \reg G_I(\Om^i(M)) \}_{ i \geq 0}$ is bounded. This gives cases where Theorem \ref{sunif} applies.

 Here is an overview of the contents of this paper. In section two we discuss many preliminary facts that we need.
In section three we prove Theorem \ref{deg-const}. In section four we recall the notion of MCM approximations and prove some results that we need. In section five we discuss a Lemma that we need.
In section six we recall some facts about support varieties over a complete intersection. In section seven we prove Theorem \ref{supp}. In section eight we discuss a few preliminaries which hold when $r^I(M) = - \infty$. In section nine we discuss a base change result that we need. In section ten we discuss a technique of reducing complexity of a module that we need.
{In section eleven we discuss a few preliminary facts of the $A[It$-module $L^I(M) = \bigoplus_{n \geq 0}M/I^{n+1}M$. In section twelve we prove Theorem \ref{special}. In the next section we prove
Theorem \ref{special-cx1}.  In section fourteen we prove Theorem \ref{cx-mcm}. In section fifteen we construct bountiful examples which illustrate our results. In section sixteen we prove Theorem
\ref{sunif}. Finally in the appendix we discuss some results in elementary algebraic geometry that we need.
\section{Preliminaries}
In this section we assume  $(A,\m)$ is a Noetherian  local ring  of dimension $d \geq 1$. All modules considered (unless otherwise stated) will be finitely generated. Let $I$ be an $\m$-primary ideal. Let $M$ be an $A$-module. Fix $i \geq 1$ and consider the function $n \rt \ell(\Ext_A^i(M, A/I^{n}))$. This function is of polynomial type say of degree $\psi_i^I(M)$.

\s\label{red-mcm} Assume $A$ is \CM. Let $N  = \Om_A^1(M)$ be the first syzygy of $M$. Then for $i \geq 2$ we have $$\Ext_A^i(M, A/I^{n}) \cong  \Ext_A^{i -1}(N, A/I^{n}).$$ As we are interested in $\psi_i^I(M)$ for $i \gg 0$ it follows that we may assume that $M$ is a MCM $A$-module.

\s\label{red-rees} Assume $A$ is Gorenstein. Let $M$ be a MCM $A$-module. Considering the sequence $0 \rt I^n \rt A \rt A/I^n \rt 0$  and as $\Ext^i_A(M, A) = 0$ for $i \geq 1$, we obtain isomorphisms
$$\Ext_A^i(M, A/I^{n}) \cong  \Ext_A^{i +1}(M, I^{n}).$$
Let $\R = A[It]$ be the Rees algebra of $I$. We note that $\bigoplus_{n \geq 1}\Ext_A^{i +1}(M, I^{n})$ is a finitely generated $\R$-module. As its components have finite length, its dimension as a $\R$-module is $\leq d$.

We will need the following result:
\begin{lemma}\label{a-dim}
Let $S = \bigoplus_{n \geq 0}S_n$ be a standard graded ring over $S_0$. Assume Krull dimension of $ S_0$ is finite. Let $E = \bigoplus_{n \geq 0}E_n$ be a finitely generated graded $S$-module. Then $\dim_{S_0} E_n$ is constant for $n \gg 0$.
\end{lemma}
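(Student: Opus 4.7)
The plan is to track the $S_0$-annihilator ideals $I_n := \operatorname{Ann}_{S_0}(E_n) \subseteq S_0$ and show that they form an eventually ascending chain. Once this is established, the finite-dimensionality hypothesis on $S_0$ forces the integer sequence $\dim(S_0/I_n)$ to stabilize, and this gives the desired conclusion because each $E_n$ turns out to be a finitely generated $S_0$-module, so $\dim_{S_0} E_n = \dim(S_0/I_n)$.

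First I would check that each $E_n$ is finitely generated as an $S_0$-module. Since $S$ is standard graded, $S_1$ is a finite $S_0$-module, and hence so is every $S_k = S_1^k$. Choosing homogeneous $S$-module generators $x_1, \ldots, x_r$ of $E$ of degrees $d_1, \ldots, d_r$ and setting $N := \max_i d_i$, one has $E_n = \sum_{i=1}^r S_{n-d_i}\, x_i$ for all $n \geq N$, visibly a finitely generated $S_0$-module. Consequently $\dim_{S_0} E_n = \dim(S_0/I_n)$, a value in $\{-\infty\} \cup \{0,1,\ldots,\dim S_0\}$.

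The key step is the inclusion $I_n \subseteq I_{n+1}$ for all $n \geq N$. Standard-gradedness supplies $S_{n+1-d_i} = S_1 \cdot S_{n-d_i}$ for $n \geq N$, whence $E_{n+1} = S_1 \cdot E_n$. If $a \in I_n$, then for every $s \in S_1$ and $e \in E_n$ the centrality of $S_0$ in $S$ gives $a(se) = s(ae) = 0$, so $a$ annihilates all of $E_{n+1}$ and hence lies in $I_{n+1}$. Taking dimensions, $\dim(S_0/I_n) \geq \dim(S_0/I_{n+1})$ for all $n \geq N$, a weakly decreasing integer sequence bounded below, which must stabilize.

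The main (and really only) obstacle is the propagation-of-annihilators step. It uses standard-gradedness in an essential way: we need $E_{n+1}$ to equal $S_1 \cdot E_n$ (not merely contain it) once $n$ passes the top degree of a generating set. Everything else is formal — finite generation of $E_n$ over $S_0$ is built into standard-gradedness, and the termination of the descending chain of dimensions uses only $\dim S_0 < \infty$. In particular no Noetherian hypothesis on $S_0$ is required.
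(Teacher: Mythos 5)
Your proof is correct, and it runs in the opposite direction from the paper's. The paper fixes algebra generators $x_1,\ldots,x_d \in S_1$ and uses the vanishing of $H^0_{S_+}(E)_n$ for $n \gg 0$ to show that the map $E_n \rt (E_{n+1})^d$, $m \mapsto (x_1m,\ldots,x_dm)$, is eventually injective; this yields $\dim_{S_0}E_n \leq \dim_{S_0}E_{n+1}$ for $n \gg 0$, and the finite Krull dimension of $S_0$ caps this weakly \emph{increasing} sequence from above. You instead use $E_{n+1}=S_1E_n$ (valid once $n$ exceeds the top degree of a homogeneous generating set) to get the ascending chain $\ann_{S_0}(E_n) \sub \ann_{S_0}(E_{n+1})$, hence a weakly \emph{decreasing} sequence $\dim_{S_0}E_n = \dim\bigl(S_0/\ann_{S_0}(E_n)\bigr)$, which stabilizes because it is bounded below. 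The two monotonicity statements are complementary (together they would even give eventual constancy with no dimension hypothesis on either side). Your route buys a little extra: it avoids local cohomology entirely, and, as you observe, it does not really use $\dim S_0 < \infty$, since a weakly decreasing sequence in $\{-\infty\}\cup\mathbb{N}$ must terminate, whereas the paper's increasing-direction argument genuinely needs the upper bound. It also sidesteps the justification of the step ``$H^0_{S_+}(E)_n=0$ for $n\gg 0$,'' which is cleanest when $S_0$ is Noetherian (as it is in the paper's application, where $S_0$ is a Rees algebra).
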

\begin{proof}
Let $x_1, \ldots, x_d \in S_1$ be such that $S = S_0[x_1, \ldots, x_d]$. We note that \\  $H^0_{S_+}(E)_n = 0$ for $n \gg 0$. Consider the map
\begin{align*}
  E_n &\rt (E_{n+1})^d \\
  m &\rt (x_1m, \cdots, x_dm)
\end{align*}
The kernel of this map is contained in  $H^0_{S_+}(E)_n$ which is zero for $n \gg 0$. It follows that $\dim_{S_0} E_n \leq \dim_{S_0} E_{n+1} $ for $n \gg 0$. As $\dim S_0$ is finite the result follows.
\end{proof}

An easy consequence of Theorem \ref{a-dim} is the following
\begin{corollary}\label{rachel}
Let $S_0$ be a Noetherian ring of finite Krull dimension. Let \\ $S = S_0[X_1, \ldots, X_c]$ be a polynomial ring with $\deg X_i = 2$. Let $E = \bigoplus_{n \geq 0}E_n$ be a finitely generated $S$-module.
Then for $j = 0, 1$,  we have  $\dim_{S_0} E_{2n + j}$ is constant for $n \gg 0$.
\end{corollary}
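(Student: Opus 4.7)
The plan is to reduce Corollary \ref{rachel} to Lemma \ref{a-dim} by a parity decomposition followed by a regrading that restores standard generation in degree one.

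First, I would observe that since every $X_i$ has degree $2$, the ring $S$ is concentrated in even degrees, i.e.\ $S_{2n+1} = 0$ for all $n \geq 0$. Consequently multiplication by any homogeneous element of $S$ preserves the parity of the degree. The subsets
$$E^{(j)} := \bigoplus_{n \geq 0} E_{2n+j} \qquad (j = 0, 1)$$
are therefore graded $S$-submodules of $E$, and $E = E^{(0)} \oplus E^{(1)}$ as graded $S$-modules. Since a direct summand of a finitely generated module is finitely generated, each $E^{(j)}$ is a finitely generated graded $S$-module.

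Next, I would regrade so that the ambient ring becomes standard graded. Let $T = S_0[Y_1, \ldots, Y_c]$ be the polynomial ring equipped with the standard grading $\deg Y_i = 1$; as a ring this coincides with $S$ under $Y_i \mapsto X_i$, only with the grading halved. Place the elements of $E_{2n+j}$ in degree $n$; this turns $E^{(j)}$ into a finitely generated graded $T$-module. Since $\dim S_0$ is finite by hypothesis, Lemma \ref{a-dim} applies to $T$ and $E^{(j)}$ and yields that $\dim_{S_0}(E^{(j)})_n = \dim_{S_0} E_{2n+j}$ is constant for $n \gg 0$, which is exactly the assertion of the corollary.

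The only point requiring genuine attention is that the parity decomposition produces honest $S$-submodules; this is precisely where the hypothesis $\deg X_i = 2$ is used and would fail if $S$ carried a standard grading. Once this is in place, the regrading is purely bookkeeping and plugging into Lemma \ref{a-dim} is immediate, so I do not expect any serious obstacle.
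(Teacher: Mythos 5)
Your proof is correct and follows essentially the same route as the paper: split $E$ into its even and odd parity parts, regrade so the polynomial ring is standard graded with generators in degree one, and apply Lemma \ref{a-dim}. Your version merely makes explicit the (easy) point that the parity pieces are finitely generated $S$-submodules, which the paper leaves implicit.
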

\begin{proof}
Let $E^{ev} = \bigoplus_{n \geq 0}E_{2n}$ with $E_{2n}$ sitting in degree $n$. Then $E^{ev}$ is a finitely generated $S_0[X_1, \ldots, X_c]$-module with $\deg X_i = 1$. By \ref{a-dim} it follows that $\dim_{S_0}E_{2n}$ is constant for $n \gg 0$.

For odd components the result follows by similarly considering $E^{o} = \bigoplus_{n \geq 0}E_{2n + 1}$ with $E_{2n + 1}$ sitting in degree $n$.
\end{proof}
\section{Proof of Theorem \ref{deg-const}}
In this section we give
\begin{proof}[Proof of Theorem \ref{deg-const}]
Without loss of any generality  we may assume that $A$ is complete.  Let $\R = A[Iu]$ be the Rees-algebra of $I$. By \ref{red-mcm} we may assume that $M$ is a MCM $A$-module. By \ref{red-rees} we have for $i \geq 1$
$$\Ext_A^i(M, A/I^{n}) \cong  \Ext_A^{i +1}(M, I^{n}).$$
By \cite[1.1]{P} we have
$\bigoplus_{i, n \geq 1}\Ext^i_A(M, I^n)$ is a finitely generated bigraded  \\ $\R[t_1, \ldots, t_c]$ module (here $\deg t_j = (0, 2)$ and $\deg yu^n = (n, 0)$ where $y \in I^n$).
Fix $i \geq 1$. Set $E_i = \bigoplus_{n \geq 1}\Ext^{i}_A(M, I^n)$. We note that $E_i $ is a finitely generated $\R$-module. Furthermore $\psi_i^I(M) = \dim_\R E_{i+1} - 1$.
Then $E = \bigoplus_{i \geq 1}E_i$ is a finitely generated $\R[t_1, \ldots, t_c]$-module with $\R$ sitting in degree zero and $\deg t_j = 2$ for all $j$. The result now follows from \ref{rachel}.
\end{proof}
\section{MCM Approximations}
In this section $(A,\m)$ is a Gorenstein local ring. 
We give a construction of MCM approximations of $M/(x_1, \ldots, x_r)M$ where $M$ is a MCM $A$-module and $x_1, \ldots, x_r$ is an $A$-regular sequence.

\s By $\CMS(A)$ we denote the stable category of MCM $A$-modules. Note by \cite[4.4.1]{Buchw} $\CMS(A)$ is a triangulated category with shift functor $\Omega^{-1}$ (the co-syzygy functor).
If $f \colon M \rt N$ is a morphism then by  $\cone(f)$ we denote the cone of $f$ in $\CMS(A)$; i.e., we have a triangle $M \xrightarrow{f} N \rt \cone(f) \rt \Omega^{-1}M$ in $\CMS(A)$.

\s We recall the notion of MCM approximations. Let $E$ be an $A$-module. By a MCM approximation of $E$ we mean an exact sequence
$$0 \rt Y \rt M \rt E \rt 0$$
 where $M$ is a MCM $A$-module and $\projdim Y$ is finite. Such sequences are not unique. However $M$ is a well-defined object in $\CMS(A)$. We usually say $M$ is a MCM approximation of $E$.

 \s \label{mcm-ex} We will need the following result, see \cite[1.4]{K}:
 let $0 \rt E_1 \rt E_2 \rt E_3 \rt 0$ be an exact sequence of $A$-modules. Then there exists an exact sequence
 $0 \rt M_1 \rt M_2 \rt M_3\rt 0$ where $M_i$ is a MCM-approximation of $E_i$.

 \s\label{construct} \emph{Construction:} Let $M$ be a MCM $A$-module. Let $x_1, \ldots, x_r$ be an $A$-regular sequence.
We construct MCM $A$-modules $V_i$ with $1 \leq i \leq r$ as follows:
\begin{enumerate}
  \item Let $V_1 = \cone(M \xrightarrow{x_1} M)$.
  \item For $i = 2, \ldots, r$ define $V_i = \cone(V_{i-1} \xrightarrow{x_i} V_{i-1})$.
\end{enumerate}
The main result of this section is
\begin{theorem}\label{mcm-V}
(with hypotheses as in \ref{construct}). For $i = 1, \ldots, r$;  $V_i$  is a MCM approximation of  $M/(x_1, \ldots, x_i)M$  in  $\CMS(A)$.
\end{theorem}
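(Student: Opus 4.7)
The plan is to proceed by induction on $i$, producing at each stage a short exact sequence exhibiting $V_i$ as an MCM approximation of $M/(x_1,\ldots,x_i)M$. The engine is a concrete model for $\cone(X \xrightarrow{x} X)$ in $\CMS(A)$ when $X$ is MCM and $x$ is $X$-regular. Since $A$ is Gorenstein, fix an exact sequence $0 \to X \xrightarrow{\iota} F \to \Omega^{-1}X \to 0$ with $F$ free and $\Omega^{-1}X$ MCM, and form the pushout of $\iota$ along multiplication by $x$ on $X$. The resulting module $V = (X \oplus F)/\{(xm, -\iota(m)) : m \in X\}$ fits in two short exact sequences
\[
0 \to X \to V \to \Omega^{-1}X \to 0, \qquad 0 \to F \to V \to X/xX \to 0,
\]
the second using $x$-regularity of $X$. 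The Frobenius structure on MCM modules identifies $V$ with $\cone(X \xrightarrow{x} X)$ in $\CMS(A)$, and the depth lemma applied to the first sequence shows $V$ is MCM.

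I will apply this with $X = V_{i-1}$ (setting $V_0 = M$) and $x = x_i$. That $x_i$ acts regularly on $V_{i-1}$ follows because MCM modules over the Gorenstein ring $A$ have associated primes of maximal dimension, hence minimal in $A$, while $x_i$ avoids all minimal primes of $A$ since $x_1,\ldots,x_r$ is $A$-regular. The pushout therefore produces $V_i$ MCM together with
\[
0 \to F_i \to V_i \to V_{i-1}/x_iV_{i-1} \to 0
\]
and $F_i$ free. For $i = 1$ this already exhibits $V_1$ as an MCM approximation of $M/x_1M$.

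For the inductive step, assume an MCM approximation $0 \to Y_{i-1} \to V_{i-1} \to M/(x_1,\ldots,x_{i-1})M \to 0$ with $\projdim Y_{i-1} < \infty$ has been constructed. Since $x_1,\ldots,x_r$ is an $M$-regular sequence ($M$ being MCM), $x_i$ is regular on $M/(x_1,\ldots,x_{i-1})M$, so tensoring the approximation sequence with $A/(x_i)$ kills the relevant $\Tor_1$ and yields
\[
0 \to Y_{i-1}/x_iY_{i-1} \to V_{i-1}/x_iV_{i-1} \to M/(x_1,\ldots,x_i)M \to 0.
\]
Splicing this with the pushout sequence and taking $Y_i$ to be the pullback of $Y_{i-1}/x_iY_{i-1}$ along $V_i \twoheadrightarrow V_{i-1}/x_iV_{i-1}$ gives $0 \to Y_i \to V_i \to M/(x_1,\ldots,x_i)M \to 0$ with $Y_i$ an extension of $Y_{i-1}/x_iY_{i-1}$ by $F_i$. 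Finite projective dimension of $Y_{i-1}$ propagates to $Y_{i-1}/x_iY_{i-1}$, and therefore to $Y_i$, closing the induction. The main bookkeeping hurdle is tracking associated primes to guarantee regularity of $x_i$ on $V_{i-1}$ at every stage and verifying that the pushout really represents the triangulated cone; both are standard but essential to make the scaffolding go through.
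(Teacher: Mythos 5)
Your proof is correct and follows essentially the same route as the paper: induction on $i$, with the pushout model of the cone supplying the sequence $0 \to F_i \to V_i \to V_{i-1}/x_iV_{i-1} \to 0$, and the inductive step obtained by reducing the approximation sequence modulo $x_i$. The only (minor) difference is that you close the induction by explicitly splicing the two short exact sequences via a pullback, whereas the paper invokes Kato's result on MCM approximations of short exact sequences (\ref{mcm-ex}); your version is, if anything, slightly more self-contained, and your explicit check that $x_i$ is regular on $V_{i-1}$ via associated primes of MCM modules is a detail the paper leaves implicit.
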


\s\label{tri}\emph{Triangulated category structure on $\CMS(A)$}. \\
The reference for this topic is \cite[4.7]{Buchw}. We first describe a
basic exact triangle. Let $f \colon M \rt N$ be a morphism in $\CMa(A)$. Note we have an exact sequence $0 \rt M \xrightarrow{i} Q \rt \Om^{-1}(M) \rt 0$, with $Q$-free. Let $C(f)$ be the pushout of $f$ and $i$. Thus we have a commutative diagram with exact rows
\[
  \xymatrix
{
 0
 \ar@{->}[r]
  & M
\ar@{->}[r]^{i}
\ar@{->}[d]^{f}
 & Q
\ar@{->}[r]^{p}
\ar@{->}[d]
& \Om^{-1}(M)
\ar@{->}[r]
\ar@{->}[d]^{j}
&0
\\
 0
 \ar@{->}[r]
  &N
\ar@{->}[r]^{i^\prime}
 & C(f)
\ar@{->}[r]^{p^\prime}
& \Om^{-1}(M)
    \ar@{->}[r]
    &0
\
 }
\]
Here $j$ is the identity map on $\Om^{-1}(M)$.
As $N, \Om^{-1}(M) \in \CMa(A)$  it follows that $C(f) \in \CMa(A)$.
Then the projection of the sequence
$$ M\xrightarrow{f} N \xrightarrow{i^\prime} C(f) \xrightarrow{-p^\prime} \Omega^{-1}(M)$$
in $\CMS(A)$  is a basic exact triangle. Exact triangles in $\CMS(A)$ are triangles isomorphic to a basic exact triangle.

Next we give a proof of
\begin{proof}[Proof of Theorem \ref{mcm-V}]
We prove the result by induction on $i$.

When $i = 1$ we note that by \ref{tri} there exists an exact sequence $0 \rt Q \rt V_1 \rt M/x_1M \rt 0$ with $Q$-free. It follows that $V_1$ is a MCM approximation of $M/x_1M$.

We assume the result when $i = a$ and prove for $i = a + 1$. By induction hypotheses we have an exact sequence
$$ 0 \rt Y \rt V_a\oplus F \rt M/(x_1, \ldots, x_a)M \rt 0, $$
where $\projdim Y$ is finite and $F$ is free. So we have an exact sequence
$$ 0 \rt Y/x_{a+1}Y \rt V_a/x_{a+1}V_a \oplus F/x_{a+1}F \rt M/(x_1, \ldots, x_{a+1})M \rt 0.$$
As $\projdim Y/x_{a+1}Y $ is finite it follows from \ref{mcm-ex} that a MCM approximation of $V_a/x_{a+1}V_a$ is a MCM approximation of  $M/(x_1, \ldots, x_{a+1})M $.
We note that by \ref{tri} there exists an exact sequence $0 \rt L \rt V_{a+1} \rt V_a/x_{a+1}V_a \rt 0$ with $L$-free. So $V_{a+1}$ is a MCM approximation of $V_a/x_{a+1}V_a$. The result follows.
\end{proof}

\s \label{mcm-fun} Let $\bx = x_1, \ldots, x_r$ be an $A$-regular sequence. Then the MCM approximation functor from $\CMS(A/(\bx)) \rt \CMS(A)$ is a triangulated functor, see \cite[p.\ 740]{BJM}.

\section{A Lemma}
In this section we prove the following result:
\begin{lemma}\label{mod-x}
Let $(A,\m)$ be a complete intersection ring and let $I$ be an $\m$-primary ideal. Let $M$ be a MCM $A$-module and let $\bx = x_1, \ldots, x_r$ be an $A$-regular sequence.
Let $D$ be a MCM-approximation of $M/\bx M$. Then $$r^I(D) = r^I(M).$$
\end{lemma}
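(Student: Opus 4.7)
Plan:

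My plan is to reduce the problem to the case of a single regular element by induction, using the construction of MCM approximations in Section 4, and then to analyze the Ext long exact sequence arising from the pushout in \ref{tri}. Since MCM approximations of $M/\bx M$ are unique up to free summands and $r^I$ is insensitive to free summands (because $\Ext^i(-, A/I^n)$ vanishes for $i \geq 1$ on free modules), I may replace $D$ by the iterated cone $V_r$ from Construction \ref{construct}. By Theorem \ref{mcm-V}, each $V_i$ is MCM and is a MCM approximation of $V_{i-1}/x_i V_{i-1}$, so induction on $r$ reduces the lemma to the case $r = 1$: for a MCM $A$-module $N$ and an $A$-regular element $x$, if $V$ is the pushout module obtained from \ref{tri} applied to $x \colon N \to N$, then $r^I(V) = r^I(N)$.

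For this case, \ref{tri} gives the short exact sequence $0 \to N \to V \to \Om^{-1}N \to 0$. Applying $\Hom(-, A/I^n)$, using the isomorphism $\Ext^j(\Om^{-1}N, A/I^n) \cong \Ext^{j-1}(N, A/I^n)$ for $j \geq 2$ (which follows from $0 \to N \to Q \to \Om^{-1}N \to 0$ with $Q$ free and vanishing of Ext on free modules), and identifying the connecting homomorphism as multiplication by $\pm x$ (since the pushout is along $x$), the long exact sequence collapses for $j \geq 2$ into
\[
0 \to \Ext^{j-1}(N, A/I^n)/x \to \Ext^j(V, A/I^n) \to \ann_x \Ext^j(N, A/I^n) \to 0.
\]
Using $\ell(M/xM) = \ell(\ann_x M)$ for finite length $M$, one obtains $\ell(\Ext^j(V, A/I^n)) = \ell(\ann_x \Ext^{j-1}(N, A/I^n)) + \ell(\ann_x \Ext^j(N, A/I^n))$. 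The upper bound $r^I(V) \leq r^I(N)$ is immediate from $\ell(\ann_x E) \leq \ell(E)$, which gives $\psi_j^I(V) \leq \max\{\psi_{j-1}^I(N), \psi_j^I(N)\} \leq r^I(N)$ for $j \gg 0$.

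The reverse inequality is the main obstacle, and my plan is to show that $x$ acts \emph{uniformly nilpotently} on the finitely generated bigraded $\R[t_1, \ldots, t_c]$-module $E = \bigoplus_{i, n \geq 1} \Ext^i(N, I^n)$ of \cite[1.1]{P}. Applying $\Hom(N, -)$ to $0 \to I^n \to A \to A/I^n \to 0$ and using $\Ext^{\geq 1}(N, A) = 0$ (because $N$ is MCM over Gorenstein $A$), one sees that $\Ext^i(N, I^n)$ is annihilated by $I^n$ for every $i \geq 1$. Fixing a finite set of $\R[t_1, \ldots, t_c]$-generators $\{e_\ell\}$ of $E$ with $e_\ell$ in Rees degree $\leq N_0$, the ideal $I^{N_0}$ therefore annihilates every generator, and since $A$ lies in the centre of $\R[t_1, \ldots, t_c]$, $I^{N_0} E = 0$. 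Because $I$ is $\m$-primary and $x \in \m$, some power $x^K$ lies in $I^{N_0}$, so $x^K E = 0$. The filtration $0 \subset \ann_x E_{i, n} \subset \ann_{x^2} E_{i, n} \subset \cdots \subset \ann_{x^K} E_{i, n} = E_{i, n}$ then yields $\ell(\ann_x E_{i, n}) \geq \ell(E_{i, n})/K$ with $K$ independent of $i$ and $n$, hence $\dim_\R \ann_x E_i = \dim_\R E_i$ for every $i$. Applying Corollary \ref{rachel} to both $E$ and the graded submodule $\ann_x E$, their stable even/odd Rees-dimensions therefore coincide, which combined with the length identity above gives $r^I(V) = r^I(N)$.
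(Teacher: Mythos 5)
Your proposal is correct and follows essentially the same route as the paper: reduce to $r=1$ via the cone construction \ref{construct}/\ref{mcm-V}, extract from a long exact sequence that $\psi^I_j$ of the approximation is controlled by the kernel and cokernel of $x$ acting on $\Ext^*_A(M,A/I^n)$, and exploit that $x$ acts nilpotently on the relevant finitely generated Rees modules to see that kernel and cokernel have the same growth, finishing with Theorem \ref{deg-const}. The only real difference is cosmetic: you work with $0\rt N\rt V\rt \Om^{-1}N\rt 0$ (identifying the connecting map as $\pm x$) and prove the equality of growth rates by an explicit filtration estimate $\ell(\ann_x E_{i,n})\geq \ell(E_{i,n})/K$, where the paper uses $0\rt M\xrightarrow{x} M\rt M/xM\rt 0$ and its Proposition \ref{nil} (localization at a minimal prime plus Nakayama); both are valid.
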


We will need the following:
\begin{proposition}
\label{nil}
Let $S$ be a Noetherian ring of finite Krull dimension and let $E$ be an $A$-module. Let $x \in S$ such that $x^n E = 0$ for some $n \geq 1$.
Consider the exact sequence
$$ 0 \rt K \rt E \xrightarrow{x} E \rt C \rt 0. $$
Then $\dim K = \dim C = \dim E$.
\end{proposition}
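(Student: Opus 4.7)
The plan is to establish the stronger statement that $\operatorname{Supp}_S(K)=\operatorname{Supp}_S(C)=\operatorname{Supp}_S(E)$ in $\operatorname{Spec} S$; the equality of dimensions will then follow at once from the standard formula $\dim M=\sup\{\dim S/\mathfrak{p}:\mathfrak{p}\in\operatorname{Supp}_S(M)\}$ together with the finiteness of $\dim S$.

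First I would note that the containments $\operatorname{Supp}_S(K)\subseteq\operatorname{Supp}_S(E)$ and $\operatorname{Supp}_S(C)\subseteq\operatorname{Supp}_S(E)$ are immediate, since $K$ is a submodule of $E$ and $C$ is a quotient of $E$. For the reverse inclusions, I would fix $\mathfrak{p}\in\operatorname{Supp}_S(E)$ and observe that $x^n\in\ann_S(E)\subseteq\mathfrak{p}$ forces $x\in\mathfrak{p}$, so multiplication by $x$ acts as a nilpotent endomorphism on the nonzero module $E_\mathfrak{p}$. I would then invoke the elementary fact that a nilpotent endomorphism $\varphi$ of a nonzero module $N$ has both $\ker\varphi\neq 0$ (take the smallest $k\geq 1$ with $\varphi^{k}N=0$; then $\varphi^{k-1}N$ is a nonzero submodule contained in $\ker\varphi$) and $N/\varphi N\neq 0$ (if $\varphi$ were surjective then $N=\varphi^{n}N=0$, contradicting $N\neq 0$). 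Since localization is exact, $K_\mathfrak{p}$ and $C_\mathfrak{p}$ are precisely the kernel and cokernel of $x$ acting on $E_\mathfrak{p}$, so both are nonzero and hence $\mathfrak{p}\in\operatorname{Supp}_S(K)\cap\operatorname{Supp}_S(C)$.

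The argument is essentially formal, and I do not anticipate any real difficulty. The one point worth highlighting is the use of exactness of localization to identify $K_\mathfrak{p}$ and $C_\mathfrak{p}$ with the kernel and cokernel of multiplication by $x$ on $E_\mathfrak{p}$, which is what permits the nilpotent-endomorphism lemma to be applied in both directions; everything else reduces to chasing containments of supports.
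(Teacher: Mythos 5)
Your argument is correct, and it actually proves something slightly stronger than the paper does. The paper localizes at a single carefully chosen prime: a minimal prime $P$ of $E$ with $\dim S/\ann(E)/P = \dim E$, notes $x \in P$, and shows $K_P \neq 0$ and $C_P \neq 0$ by exploiting that $E_P$ is Artinian at a minimal prime (injectivity or surjectivity of $x$ on a finite-length module forces $E_P = xE_P$, whence $E_P = 0$ by Nakayama). You instead establish $\operatorname{Supp}_S(K) = \operatorname{Supp}_S(C) = \operatorname{Supp}_S(E)$ at \emph{every} prime of the support, replacing the Artinian-plus-Nakayama step with the elementary observation that a nilpotent endomorphism of a nonzero module has nonzero kernel and nonzero cokernel. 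Your local step is more flexible (it does not require the prime to be minimal, nor any finiteness of $E_{\mathfrak{p}}$) and the conclusion on supports is stronger than the equality of dimensions; the paper's route buys nothing extra here beyond familiarity of the Nakayama argument. Both proofs then conclude identically by comparing dimensions via supports.
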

\begin{proof}
We have nothing to prove if $E = 0$. So assume $E \neq 0$.
We note that $\dim K \leq \dim E$ and $\dim C \leq \dim E$. Set $T = S/\ann(E)$. Then $x$ is nilpotent in $T$. Let $P$ be a minimal prime of $E$ such that $\dim T/P = \dim E$. We note that
$x \in P$.

Claim-1: $K_P \neq 0$. (This will prove $\dim K = \dim E$).

If $K_P = 0$ then we have an injection $E_P \xrightarrow{x} E_P$. As $P$ is a minimal prime of $E$ the $A_P$-module $E_P$ is Artinian. So $E_P$ has finite length. Thus $E_P = xE_P$. Also $x \in PA_P$. By Nakayama lemma we get $E_P = 0$ which is a contradiction.

Claim-2: $C_P \neq 0$. (This will prove $\dim C = \dim E$).

If $C_P = 0$ then we have a surjection $E_P \xrightarrow{x} E_P$.  Thus $E_P = xE_P$. Also $x \in PA_P$. By Nakayama lemma  we get that $E_P = 0$ which is a contradiction.
\end{proof}

We now give
\begin{proof}[Proof of Lemma \ref{mod-x}]
By our construction \ref{mcm-V} it suffices to prove the result when $r = 1$. Set $x = x_1$ and let $0 \rt Q \rt V \rt M/xM \rt 0$ be a MCM approximation of $M/xM$ with $Q$-free.
Then note that $r^I(M/xM) = r^I(V)$.
The short exact sequence $0 \rt M \xrightarrow{x} M \rt M/xM \rt 0$ induces an exact sequence
\begin{align*}
  \Ext^{i-1}_A(M, A/I^n) &\xrightarrow{x} \Ext^{i-1}_A(M, A/I^n)  \rt \Ext^{i}_A(M/xM, A/I^n)  \\
  \rt \Ext^{i}_A(M, A/I^n) &\xrightarrow{x} \Ext^{i}_A(M, A/I^n)
\end{align*}
Let $\R = A[Iu]$ be the Rees algebra of $I$.
Consider the $\R$-modules \\ $E_i = \bigoplus_{n \geq 1} \Ext^{i}_A(M, A/I^n) $ and $C_i = \bigoplus_{n \geq 1} \Ext^{i}_A(M/xM, A/I^n)$. So for $i \geq 3$  we have an exact sequence of $\R$-modules
\[
E_{i-1} \xrightarrow{xu^0} E_{i-1} \rt C_i \rt E_i \xrightarrow{xu^0} E_i.
\]
We note that as $I$ is $\m$-primary some power of $x$ is in $I$. Also $E_i$ is a finitely generated $\R$-module. So some power of $x$ will annihilate $E_i$.
As $\psi^I_i(M) = \dim E_i -1$ it follows from \ref{nil} that for $i \geq 3$ we have
\[
\psi^I_i(M/xM) = \max \{ \psi^I_{i-1}(M), \psi^I_i(M) \}.
\]
The result follows from Theorem \ref{deg-const}.
\end{proof}
\section{Some Preliminaries on support varieties}
In this section we discuss a few preliminaries on support varieties that we need.

\s Let $(Q,\n)$ be a  local ring and let $\bff = f_1,\ldots, f_c \in \n^2$ be a regular sequence. Set $A = Q/(\bff)$ and denote its maximal ideal by $\m$.
The \emph{Eisenbud operators}, \cite{Eis}  are constructed as follows: \\
Let $\mathbb{F} \colon \cdots \rightarrow F_{i+2} \xrightarrow{\partial} F_{i+1} \xrightarrow{\partial} F_i \rightarrow \cdots$ be a complex of free
$A$-modules.

\emph{Step 1:} Choose a sequence of free $Q$-modules $\wt{F}_i$ and maps $\wt{\partial}$ between them:
\[
\wt{\mathbb{F}} \colon \cdots \rightarrow \wt{F}_{i+2} \xrightarrow{\wt{\partial}} \wt{F}_{i+1} \xrightarrow{\wt{\partial}} \wt{F}_i \rightarrow \cdots
\]
so that $\mathbb{F} = A\otimes\wt{\mathbb{F}}$.

\emph{Step 2:} Since $\wt{\partial}^2 \equiv 0 \ \text{modulo} \ (\mathbf{f})$, we may write  $\wt{\partial}^2  = \sum_{j= 1}^{c} f_j\wt{t}_j$ where
$\wt{t_j} \colon \wt{F}_i \rightarrow \wt{F}_{i-2}$ are linear maps for every $i$.

 \emph{Step 3:}
Define, for $j = 1,\ldots,c$ the map $t_j = t_j(Q, \mathbf{f},\mathbb{F}) \colon \mathbb{F} \rightarrow \mathbb{F}(-2)$ by $t_j = A\otimes\wt{t}_j$.
\s
The operators $t_1,\ldots,t_c$ are called Eisenbud's operator's (associated to $\mathbf{f}$) .  It can be shown that
\begin{enumerate}
\item
$t_i$ are uniquely determined up to homotopy.
\item
$t_i, t_j$ commute up to homotopy.
\end{enumerate}

\s Let $R = A[t_1,\ldots,t_c]$ be a polynomial ring over $A$ with variables $t_1,\ldots,t_c$ of degree $2$. Let $M, N$ be  finitely generated $A$-modules. By considering a free resolution $\mathbb{F}$ of $M$ we get well defined maps
\[
t_j \colon \Ext^{n}_{A}(M,N) \rightarrow \Ext^{n+2}_{R}(M,N) \quad \ \text{for} \ 1 \leq j \leq c  \ \text{and all} \  n,
\]
which turn $\Ext_A^*(M,N) = \bigoplus_{i \geq 0} \Ext^i_A(M,N)$ into a module over $R$. Furthermore these structure depend  on $\bff$, are natural in both module arguments and commute with the connecting maps induced by short exact sequences.

\s  Gulliksen, \cite[3.1]{Gull},  proved that if $\projdim_Q M$ is finite then
$\Ext_A^*(M,N) $ is a finitely generated $R$-module. 

\s We need to recall the notion of complexity of a module.
 Let $\beta_i^A(M) = \ell( \Tor^A_i(M,k) )$ be the $i^{th}$ Betti number of $M$ over $A$. The complexity of $M$ over $A$ is defined by
\[
\cx_A M = \inf\left \lbrace b \in \mathbb{N}  \left\vert \right.   \varlimsup_{n \to \infty} \frac{\beta^A_n(M)}{n^{b-1}}  < \infty \right \rbrace.
\]
If $A$ is a local complete intersection of $\codim c$ then $\cx_A M \leq c$. Furthermore all values between $0$ and $c$ occur.
Note that $\cx_A M \leq 1$ if and only if $M$ has bounded Betti numbers.

\s Assume $\projdim_Q M$ is finite. Note $\Ext^*_A(M,N) \otimes k$ is a finitely generated module over $S = R/\m R = k[t_1,\ldots,t_c]$.
Let $J = \ann_S \Ext^*_A(M, N) \otimes k$ be its annihilator. Let $\wt{k}$ be an algebraic closure of $k$.
Then  the variety $Var(J)$ in $\wt{k}^c$ is called the \emph{variety} of $M$ and $N$ and is denoted by $\V_A(M, N)$.

\s If $(A,\m)$ is a complete intersection ring of codimension $c$ then let $\wh{A}$ be the $\m$-adic completion of $A$. Then $\wh{A} = Q/\mathbf{q}$ for some regular local ring $(Q, \n)$ and ideal $\mathbf{q}$ generated by a regular sequence $\bff = f_1, \ldots, f_c \in \n^2$. If $M, N$ are $A$-module then let $\wh{M}, \wh{N}$ denote their completions. Set $\V_A(M,N) =  \V_{\wh{A}}(\wh{M}, \wh{N})$.\
It can be shown that this definition is independent of the Cohen-presentation of $\wh{A}$, see \cite[5.3]{AB}.

\s\label{prop-v}   We will need a few preliminary facts on support varieties see \cite[5.6]{AB}.

Define $\V_A(M) = \V_A(M, k)$. We have
\begin{enumerate}
 \item $\V_A(M, N)$ is a cone in $\wt{k}^c$.
  \item  $\V_A(M) = \V_A(M, M) = \V_A(k, M)$.
  \item $\V_A(M, N) = \V_A(N, M) = \V_A(M)\cap \V_A(N)$.
  \item  $\V_A(M, N) = \V_A(\Omega^i_A(M), \Omega^j(N))$ for all $i,j \geq 0$.
  \item $\V_A(M, N) = \{ 0 \}$ if and only if $\Ext^n_A(M, N) = 0$ for $n \gg 0$.
  \item $\dim \V_A(M, N) \leq 1$ if and only if $\Ext^i_A(M, N) \cong \Ext^{i+2}_A(M, N)$ for $i \gg 0$.
  \item If $0 \rt M_1 \rt M_2 \rt M_3 \rt 0$ is a short exact sequence of $A$-modules. For $\{i, j, h \} = \{1, 2, 3 \}$ we have
  $$ \V_A(M_h, N) \subseteq \V_A(M_i, N)\cup \V_A(M_j, N).$$
\end{enumerate}

\s \label{mod-x-new} Let $x \in \m \setminus \m^2$ be $A$-regular. If $A$ is a complete intersection of codimension $c$ then $A/(x)$ is also a complete intersection of codimension $c$. We show
\begin{proposition}\label{var-mod-x}
(with hypotheses as in \ref{mod-x-new}). Assume further $x$ is $M$-regular. Then
$$V_A(M) = V_{A/(x)}(M/xM). $$
\end{proposition}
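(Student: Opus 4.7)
Since both sides of the claimed equality are defined after $\mathfrak{m}$-adic completion, the plan is first to pass to $\widehat{A}$ and then to build compatible Cohen presentations. Write $\widehat{A} = Q/(\mathbf{f})$ with $(Q,\mathfrak{n})$ regular local and $\mathbf{f} = f_1,\dots,f_c \in \mathfrak{n}^2$ a regular sequence. Because $\mathbf{f} \subseteq \mathfrak{n}^2$, the quotient map $Q \twoheadrightarrow \widehat{A}$ induces an isomorphism $\mathfrak{n}/\mathfrak{n}^2 \cong \mathfrak{m}/\mathfrak{m}^2$, so the hypothesis $x \in \mathfrak{m} \setminus \mathfrak{m}^2$ lets us lift $x$ to $\widetilde{x} \in \mathfrak{n} \setminus \mathfrak{n}^2$. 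Thus $Q' := Q/(\widetilde{x})$ is again regular local. Since $x$ is $A$-regular, $\widetilde{x}$ is regular on $Q/(\mathbf{f})$, so by permutability of regular sequences in the Cohen--Macaulay ring $Q$ the sequence $\widetilde{x},\mathbf{f}$ is $Q$-regular; hence $\mathbf{f}' := \mathbf{f} \bmod \widetilde{x}$ is a $Q'$-regular sequence lying in $(\mathfrak{n}')^2$, and $\widehat{A/(x)} = Q'/(\mathbf{f}')$ is a Cohen presentation of codimension $c$.

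Next, let $\mathbb{F} \colon \cdots \to F_{i+1} \to F_i \to \cdots$ be a minimal free resolution of $\widehat{M}$ over $\widehat{A}$, and lift it to a sequence $\widetilde{\mathbb{F}}$ of free $Q$-modules with differential $\widetilde{\partial}$ satisfying $\widetilde{\partial}^2 = \sum_{j=1}^c f_j \widetilde{t}_j$. Because $x$ is $\widehat{M}$-regular, the complex $\mathbb{F} \otimes_{\widehat{A}} \widehat{A/(x)}$ is a free resolution of $\widehat{M}/x\widehat{M}$ over $\widehat{A/(x)}$, and its canonical lift to $Q'$ is $\widetilde{\mathbb{F}} \otimes_Q Q'$ with differential $\widetilde{\partial} \otimes Q'$ satisfying
\[
(\widetilde{\partial} \otimes Q')^2 = \sum_{j=1}^c f'_j (\widetilde{t}_j \otimes Q').
\]
Therefore the Eisenbud operators $t'_1,\dots,t'_c$ on $\mathbb{F}/x\mathbb{F}$ attached to the presentation $Q'/(\mathbf{f}')$ are obtained by reducing the Eisenbud operators $t_1,\dots,t_c$ on $\mathbb{F}$ modulo $x$.

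The plan is now to compare the two $k[t_1,\dots,t_c]$-module structures on a common underlying vector space. Using the standard adjunction and the fact that $xk = 0$, one has
\[
\Hom_{\widehat{A/(x)}}(\mathbb{F}/x\mathbb{F}, k) = \Hom_{\widehat{A}}(\mathbb{F}, k),
\]
and passing to cohomology yields a natural identification
\[
\Ext^*_{A/(x)}(M/xM, k) \cong \Ext^*_{A}(M, k).
\]
Under this identification, the action of $t'_j$ (given by precomposition with the chain map induced by $\widetilde{t}_j \otimes Q'$) agrees with the action of $t_j$ (given by precomposition with the chain map induced by $\widetilde{t}_j$), because the two chain maps differ only by a coefficient lying in $(x) \subseteq \ann_{\widehat{A}} k$. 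Hence the two $k[t_1,\dots,t_c]$-modules are isomorphic, so their annihilators, and therefore the zero loci, coincide. This gives $\V_A(M) = \V_{A/(x)}(M/xM)$.

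The only genuinely delicate point is the compatibility of the two Cohen presentations through the lift $\widetilde{x}$; once that is set up, the identification of Eisenbud operators and their actions on Ext is essentially formal. If a subtlety arises, it will be in verifying that changing the ambient presentation does not affect the variety, but this is exactly the content of \cite[5.3]{AB}, so the argument above is independent of all choices.
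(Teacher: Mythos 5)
Your proposal is correct and follows essentially the same route as the paper: complete, form the compatible Cohen presentation $Q/(\widetilde{x})/(\overline{\mathbf{f}})$ of $A/(x)$, observe that $\mathbb{F}/x\mathbb{F}$ resolves $M/xM$ with the reduced Eisenbud operators, and identify $\Hom_{A/(x)}(\mathbb{F}/x\mathbb{F},k)$ with $\Hom_A(\mathbb{F},k)$ compatibly with the operators. Your treatment of the lift $\widetilde{x}\in\mathfrak{n}\setminus\mathfrak{n}^2$ and the permuted regular sequence is in fact more careful than the paper's terse phrasing, but it is the same argument.
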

\begin{proof}
Let $(\wh{A}, \wh{\m})$ be the completion of $A$. Then $(\wh{A}/x \wh{A}, \wh{\m}/(x))$
is the completion of $A/(x)$. Also if $\wh{A} = Q/(\bff)$ where $(Q,\n)$ is regular local and
$\bff = f_1, \ldots, f_c$ then note that $Q/(x)$ is also regular local and so
 $\wh{A/(x)} = Q/(x)/(\ov{\bff})$ is also a Cohen-presentation of $A/(x)$. We note that $x$
 is also $\wh{M}$-regular. Thus we can assume that $A$ is complete. We note that $\codim A/(x) = \codim A$.

 Let $\F$ be a resolution of $M$. Then  $\F/x\F$ is a resolution of $M/xM$ as an $A/(x)$-module. It readily follows that the Eisenbud operators on $\F$ induce the Eisenbud-operators on $\F/x\F$. We note that there is an isomorphism of complexes
 \[
 \Hom_{A/(x)}(\F \otimes_A A/(x), k) \cong \Hom_A(\F, \Hom_{A/(x)}(A/(x), k) = \Hom_A(\F, k).
 \]
 The Eisenbud operators on both the complexes are the same. It follows that we have an isomorphism of $k[t_1, \ldots, t_c]$-modules $\Ext^*_A(M,k)$ and $\Ext^*_{A/(x)}(M/xM, k)$. The result follows.
\end{proof}

\begin{remark}\label{proj}
In our examples we will consider for convenience  $A$ to be complete with algebraically
closed residue field $k$. We will also need to do some elementary algebraic geometry in our examples. It is easier to work with varieties in projective spaces than cones in affine space. As
$\V_A(M, N)$ is a cone in $k^{c}$, it will correspond to a variety $\Zc_A(M, N)$ in $\Pb^{c-1}_k$.
\end{remark}

\section{Proof of Theorem \ref{supp}}
In this section we give a proof of Theorem \ref{supp}. We need a few preliminaries. In this section $(A,\m)$ is a complete intersection of codimension $c$ and $I$ is an $\m$-primary ideal.
\begin{lemma}
\label{thick} Let $M$ be a MCM $A$-module. Consider $\thick(M)$ the thick subcategory in $\CMS(A)$ generated by $M$. If $N \in \thick(M)$ then $r_I(N) \leq r_I(M)$.
\end{lemma}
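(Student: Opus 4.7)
The plan is to consider the full subcategory
$$\mathcal{C} = \{\, N \in \CMS(A) \mid r_I(N) \leq r_I(M) \,\}$$
of $\CMS(A)$ and to show that $\mathcal{C}$ is thick. Since $M \in \mathcal{C}$ trivially, this will give $\thick(M) \subseteq \mathcal{C}$, which is the claim. What remains is to verify three closure properties: under (i) the shifts $\Om^{\pm 1}$, (ii) direct summands, and (iii) completion of triangles.

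I would handle (i) and (ii) first, as they are routine. The syzygy sequence $0 \rt \Om(X) \rt F \rt X \rt 0$ with $F$ free yields $\Ext^i_A(\Om X, -) \cong \Ext^{i+1}_A(X, -)$ for $i \geq 1$, so $\psi_i^I(\Om X) = \psi^I_{i+1}(X)$ for $i \gg 0$, which simply swaps the eventual even- and odd-indexed constants. Hence $r_I(\Om X) = r_I(X)$; the case of $\Om^{-1}$ is symmetric. For summands: if $N$ is a direct summand of $L$, then $\Ext^i_A(N, A/I^n)$ is a summand of $\Ext^i_A(L, A/I^n)$, so $\psi_i^I(N) \leq \psi_i^I(L)$ and hence $r_I(N) \leq r_I(L)$.

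The heart of the argument is step (iii). Given a triangle $X \xrightarrow{f} Y \rt Z \rt \Om^{-1}X$ with $X, Y \in \mathcal{C}$, the construction in \ref{tri} lets me represent $Z$ by $C(f)$, which sits in an honest short exact sequence $0 \rt Y \rt C(f) \rt \Om^{-1}X \rt 0$ of MCM modules. Applying $\Hom_A(-, A/I^n)$ and using the isomorphism $\Ext^i_A(\Om^{-1}X, -) \cong \Ext^{i-1}_A(X, -)$ for $i \geq 2$ (obtained from $0 \rt X \rt Q \rt \Om^{-1}X \rt 0$ with $Q$ free), the resulting long exact sequence produces
$$\ell\bigl(\Ext^i_A(C(f), A/I^n)\bigr) \leq \ell\bigl(\Ext^{i-1}_A(X, A/I^n)\bigr) + \ell\bigl(\Ext^i_A(Y, A/I^n)\bigr).$$
For $i = 2k + j$ with $j \in \{0, 1\}$ and $k \gg 0$, the right-hand side is eventually polynomial in $n$ of degree $\max\{r_{1-j}^I(X), r_j^I(Y)\}$. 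Thus $r_j^I(Z) \leq \max\{r_I(X), r_I(Y)\}$ for each parity, whence $r_I(Z) \leq \max\{r_I(X), r_I(Y)\}$. Combined with step (i), the usual rotation argument promotes this to the full two-out-of-three property for $\mathcal{C}$.

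The main obstacle is the parity bookkeeping in step (iii): the $\Ext$-index shifts by one in passing through $\Om^{-1}X$, so one must control the even and odd subsequences of $\{\psi_i^I\}$ simultaneously. This is exactly what Theorem \ref{deg-const} provides. Once that is in place, $\mathcal{C}$ is a thick subcategory containing $M$, and the lemma follows.
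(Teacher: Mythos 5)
Your proposal is correct and follows essentially the same route as the paper: shift-invariance of $r_I$ via the syzygy isomorphisms, the cone estimate from the short exact sequence $0 \rt Y \rt C(f) \rt \Om^{-1}X \rt 0$ of \ref{tri} (the paper writes the middle term as $E \oplus F$ with $F$ free, which your summand step absorbs), monotonicity under direct summands, and Theorem \ref{deg-const} to handle the parity shift. Packaging this as "the subcategory $\mathcal{C}$ is thick and contains $M$" is just a cleaner phrasing of the paper's concluding appeal to the construction of $\thick(M)$.
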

\begin{proof}
We prove the result in a few steps.

Claim-1 $r^I(E) = r^I(\Omega(E)) = r^I(\Omega^{-1}(E))$.
The first assertion follows from \ref{red-mcm}. The second follows from the first as $\Omega(\Omega^{-1}(M)) \cong M$ in $\CMS(A)$.

Claim-2: If $D \rt N \rt E \rt \Om^{-1}(D)$ is a triangle in $\CMS(A)$ then
$$ r^I(E) \leq \max \{ r^I(N), r^I(D) \}.$$
By the construction of triangles in $\CMS(A)$, there exists an exact sequence of $A$-modules
$$0\rt N \rt E\oplus F \rt \Om^{-1}(D) \rt 0 \quad \text{where} \ F \ \text{is a free $A$-module.} $$
It follows that for all $i \geq 1$ there exists an exact sequence
\[
\rt \Ext^i_A(\Om^{-1}(D), A/I^n) \rt \Ext^i_A(E, A/I^n) \rt \Ext^i_A(N, A/I^n) \rt
\]
It follows that $$\psi^I_i(E) \leq \max \{ \psi^I_i(N), \psi^I_i(\Om^{-1}(D)) \}.$$
So we have
$$r_I(E) \leq \max \{ r^I(N), r^I(\Om^{-1}(D)\}.$$
The result follows from Claim-1.

Claim-3: If $U$ is a direct summand of $V$ then $r^I(U) \leq r^I(V)$.

It is elementary to note that $\psi^I_i(U) \leq \psi^I_i(V)$ for all $i \geq 1$. The result follows.

The result now follows from the construction of thick subcategory generated by a module $M$.
\end{proof}

We now give
\begin{proof}[Proof of Theorem \ref{supp}]
By \ref{red-mcm} and \ref{prop-v} it follows that we can assume $M, N$ to be MCM $A$-modules.
  Let $\bx = x_1, \ldots, x_d \in \m\setminus \m^2$ be a maximal regular sequence in $A$. As $M, N$ are MCM $A$-modules we get that  $\bx$ is $M$ and  $N$-regular sequence. Set $B = A/(\bx)$. By \ref{var-mod-x} it follows that
  $\V_A(M) = \V_B(M/\bx M)$ and $\V_A(N) = \V_B(N/\bx N)$. We have $\V_B(N/\bx N) \subseteq \V_B(M/\bx M)$. So by \cite[5.6, 5.12]{CI}, it follows that $N/\bx N \in \thick_B(M/\bx M)$.
  Consider the MCM approximation functor $\CMS(B) \rt \CMS(A)$. Let $X_M( \text{resp.} \ X_N)$ be MCM-approximations of $M/\bx M( \text{resp.} \ N/\bx N)$. Then $X_N \in \thick_A(X_M)$.
  By \ref{construct} and \ref{mcm-V}, it follows that $X_M \in \thick(M)$. So $X_N \in \thick(M)$. So by Lemma \ref{thick}, we get $r^I(X_N) \leq r^I(M)$. By \ref{mod-x}, we get $r^I(X_N) = r^I(N)$. The result follows.
\end{proof}

\section{some preliminaries when $r^I(M) = - \infty$}
In this section we give some consequences of the assumption that $r^I(M) = -\infty$. Throughout this section $A$ is a local complete intersection of codimension $c$. We will assume that $M$ is  a MCM $A$-module.

\s \label{as-ideal} Let $J$ be any ideal in $A$. In \cite[7.8]{P} we showed that $\V_A(k, J^n)$ is constant for $n \gg 0$.
By \ref{prop-v} we have $\V_A(k, J^n) = \V_A(J^n, k) = \V_A(J^n)$ for all $n$. We set $\V^\infty_A(J) = $ the constant value of $\V_A(J^n) $ for all $n \gg 0$.

Next we give equivalent conditions for $r^I(M) = -\infty$.
\begin{theorem}\label{equi}
Let $(A,\m)$ be a complete intersection of dimension $d \geq 1$ and  codimension $c \geq 1$. Let $M$ be a MCM $A$-module. The following assertions are equivalent:
\begin{enumerate}[\rm (i)]
  \item $r^I(M) = -\infty$.
  \item There exists $i_0$ such that for $i \geq i_0$ there exists $n(i)$ (depending on $i$) such that $\Ext^i_A(M, A/I^n) = 0$ for $n \geq n(i)$.
  \item There exists $i_0$ and $n_0$ such that $\Ext^i_A(M, A/I^n) = 0$  for $i \geq i_0$ and $n \geq n_0$.
  \item $\V^\infty_A(I) \cap \V_A(M) = \{ 0 \}$.
  \item Assume $\V_A(I^n) = \V^\infty_A(I)$ for $n \geq t$. Then $\Tor^A_i(M, A/I^n) = 0$ for all $i > 0$ and $n \geq t$.
\end{enumerate}
\end{theorem}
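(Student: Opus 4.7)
The plan is to prove the cycle (i) $\Leftrightarrow$ (ii) $\Leftrightarrow$ (iii) $\Leftrightarrow$ (iv) $\Leftrightarrow$ (v). The equivalence (i) $\Leftrightarrow$ (ii) is immediate from the definition: by Theorem \ref{deg-const} and the convention $\deg 0 = -\infty$, $r^I(M) = -\infty$ iff both $r_0^I(M) = r_1^I(M) = -\infty$ iff, for all sufficiently large $i$, the polynomial $n \mapsto \ell(\Ext_A^i(M, A/I^n))$ is identically zero, which translates to $\Ext_A^i(M, A/I^n) = 0$ for $n \geq n(i)$. The direction (iii) $\Rightarrow$ (ii) is trivial; (iii) $\Rightarrow$ (iv) is also immediate from \ref{prop-v}(v) combined with the identity $\V_A(A/I^n) = \V_A(I^n)$, which follows from $0 \to I^n \to A \to A/I^n \to 0$, \ref{prop-v}(iii),(vii), and $\V_A(A) = \{0\}$; and (v) $\Rightarrow$ (iv) is immediate from Avramov-Buchweitz (Tor-vanishing version), applied to the column at any $n \geq t$.

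The two nontrivial implications (ii) $\Rightarrow$ (iii) and (iv) $\Rightarrow$ (iii) are driven by the finite generation established in the proof of Theorem \ref{deg-const}: $E = \bigoplus_{i, n \geq 1} \Ext_A^i(M, A/I^n)$ is finitely generated as a bigraded module over $\R[t_1, \ldots, t_c]$, where $\R = A[Iu]$ acts in the $n$-direction and the Eisenbud operators $t_j$ in the $i$-direction. For (ii) $\Rightarrow$ (iii): choose homogeneous generators $h_\beta$ of bidegrees $(\tilde i_\beta, \tilde n_\beta)$ for the truncation $E_{\geq i_0}$. Under (ii), each row $E_{i, *}$ with $i \geq i_0$ is $\R_+$-torsion, so each $h_\beta$ is annihilated by some $\R_+^{N_\beta}$; taking $N = \max_\beta N_\beta$ and using the commutativity of $\R$ with the $t_j$'s, we obtain $\R_+^N \cdot E_{\geq i_0} = 0$. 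Since $\R$ is generated in degree $1$ over $A$, $\R_+^N = \R_{\geq N}$, and hence $E_{i, n} = 0$ for $i \geq i_0$ and $n \geq (\max_\beta \tilde n_\beta) + N$. The symmetric argument on columns handles (iv) $\Rightarrow$ (iii): for $n \geq t$, (iv) gives $\V_A(M, A/I^n) = \V_A(M) \cap \V_A^\infty(I) = \{0\}$, so each column $E^{n, *}$ is $(t_1, \ldots, t_c)$-torsion; finite generation of the truncation $E^{\geq t, *}$ over $\R[t]$ then provides a uniform annihilating power of $(t_1, \ldots, t_c)$ and a uniform bound on the $i$-degrees of its generators, forcing $E_{i, n} = 0$ for $i$ past a fixed threshold and $n \geq t$.

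The main obstacle is (iv) $\Rightarrow$ (v). Given $\V_A(M) \cap \V_A^\infty(I) = \{0\}$ and $n \geq t$, Avramov-Buchweitz immediately provides $\Tor_i^A(M, A/I^n) = 0$ for $i \gg 0$, but the target is vanishing for \emph{every} $i \geq 1$. My plan is to invoke the depth formula of Iyengar / Araya-Yoshino for derived tensor products: whenever $\Tor_i^A(M, N) = 0$ for $i \gg 0$ over a complete intersection, $\depth_A(M \otimes_A^L N) = \depth M + \depth N - \depth A$. With $M$ MCM ($\depth M = d$) and $N = A/I^n$ ($\depth N = 0$), the right-hand side equals $0$. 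Since each $\Tor_i^A(M, A/I^n)$ has finite length (hence trivial positive local cohomology), the local-cohomology spectral sequence $E_2^{p, q} = H^p_\m H^q(X) \Rightarrow H^{p+q}_\m(X)$ applied to $X = M \otimes_A^L A/I^n$ collapses at $E_2$ and identifies $H^{-i}_\m(X) = \Tor_i^A(M, A/I^n)$. Therefore $\depth_A X = -\sup\{i : \Tor_i^A(M, A/I^n) \neq 0\}$; equating with $0$ and using $\Tor_0^A(M, A/I^n) = M/I^n M \neq 0$ forces $\Tor_i^A(M, A/I^n) = 0$ for all $i \geq 1$, which is (v).
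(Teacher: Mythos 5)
Your proposal is correct, and its skeleton --- (i)$\Leftrightarrow$(ii) from the definition and Theorem \ref{deg-const}, the trivial directions, the use of the finite generation from \cite[1.1]{P}, of \ref{prop-v}(v), and of Avramov--Buchweitz for the Tor statements --- matches the paper's. Two implications are handled by genuinely different means. For (ii)$\Rightarrow$(iii) the paper invokes West's theorem \cite[3.4]{W} on the asymptotic stability of $\Ass$ of the bigraded modules $E^{ev}$ and $E^{odd}$; you instead extract a uniform annihilating power of $\R_+$ from finitely many bihomogeneous generators of the truncation $E_{\geq i_0}$ and use $\R_+^N=\R_{\geq N}$. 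Your argument is more elementary and self-contained, and it has the bonus of giving (iv)$\Rightarrow$(iii) directly by the symmetric column-wise argument (uniform power of $(t_1,\dots,t_c)$), whereas the paper routes (iv)$\Rightarrow$(ii) through \ref{prop-v}(v) and then reuses (ii)$\Rightarrow$(iii); both are fine. For (iv)$\Rightarrow$(v) the paper simply cites \cite[III, IV, 4.9]{AB}; you re-derive the needed rigidity from the derived depth formula over complete intersections together with the observation that every $\Tor^A_i(M,A/I^n)$ has finite length, so the local cohomology spectral sequence collapses and $\depth(M\otimes^{L}_A A/I^n)=-\sup\{i : \Tor_i\neq 0\}$. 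This is essentially a proof of the cited black box (equivalently one may apply Araya--Yoshino's depth formula for modules of finite CI-dimension, whose hypothesis $\depth \Tor_q\leq 1$ is automatic here), so you are reproving rather than avoiding it; the only point to be careful about is that the depth formula you quote must be the version valid under merely eventual Tor vanishing, which is exactly the complete-intersection-specific input.
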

\begin{proof}
We will use the fact that for any ideal $J$ we have $\V_A(J) = \V_A(A/J)$.
  (i) $\Leftrightarrow$ (ii).  This follows from definition and Theorem \ref{deg-const}.

  (iii) $\implies$ (ii). This is clear.

  (ii) $\implies$ (iii). We may assume that $A$ is complete. Let $\R = A[Iu]$ denote the Rees algebra of $I$.  We may also assume that $M$ is MCM $A$-module. Then
  $E = \bigoplus_{i, n \geq 1} \Ext^i_A(M, A/I^n) = \bigoplus_{i, n \geq 1} \Ext^{i +1}_A(M, I^n)$ is a  finitely generated bigraded $\R[t_1, \ldots, t_c]$-module with $\deg t_i = (0, 2)$ and $\deg yu^n = (n,0)$ where $y \in I^n$, see \cite[1.1]{P}.
  Set $E^{ev} = \bigoplus_{i, n \geq 1} \Ext^{2i}_A(M, A/I^{n})$ is a  finitely generated bigraded $\R[t_1, \ldots, t_c]$-module with $\deg t_i = (0, 1)$ and $\deg yu^n = (n,0)$ where $y \in I^n$.
  By \cite[3.4]{W} there exists $(i_0, n_0)$ such that $\Ass_A E^{ev}_{i,n} = \Ass_A E^{ev}_{i_0, n_0}$ for all $(i,n) \geq (i_0, n_0)$. It follows that for all $(i,n) \geq (i_0, n_0)$ we have
  $\Ext^{2i}_A(M, A/I^n) = 0$. For odd values of $i$ the result follows by considering $E^{odd} = \bigoplus_{i, n \geq 1} \Ext^{2i + 1}_A(M, A/I^{n})$.

  (iii) $\implies$ (iv). It follows from \ref{prop-v}  that for $n \geq n_0$ we have $\V_A(I^n) \cap \V_A(M) = \{ 0 \}$. The result follows from \ref{as-ideal}.

  (iv) $\implies$ (ii) Say for $n \geq n_0$ we have $\V_A(I^n) = \V_A^\infty(I)$. Then we have for $n \geq n_0$ we have $\V_A(I^n)\cap \V_A(M) = \{ 0 \}$. The result follows from \ref{prop-v}.

  (v) $\implies$ (iv) It follows from \cite[III, IV]{AB} that $\V_A(I^n) \cap \V_A(M) = \{ 0 \}$. 

  (iv) $\implies$ (v)  The result follows from \cite[III, IV]{AB} and \cite[4.9]{AB}.
\end{proof}

As an application of the above result we show:
\begin{proposition}
\label{mod-sup}
Let $(A,\m)$ be a complete intersection of dimension  and let $I$ be an $\m$-primary ideal. Let $M$ be a MCM $A$-module and let $x \in I$ be $A\oplus \Omega^i(M)$-superficial for all $i \geq 0$.
Set $B = A/(x)$, $J = I/(x)$ and $N = M/xM$. Assume $r^I_A(M) = -\infty$.
Then
$r^J_B(N) = -\infty$.
\end{proposition}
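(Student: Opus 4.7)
The plan is to use the Ext-vanishing characterization of Theorem~\ref{equi}. By (i)~$\Leftrightarrow$~(iii) of that theorem applied to $(A,I,M)$, the hypothesis $r^I_A(M) = -\infty$ furnishes integers $i_0, n_0$ with $\Ext^i_A(M, A/I^n) = 0$ for all $i \geq i_0$ and $n \geq n_0$. Applying (i)~$\Leftrightarrow$~(iii) again, but to $(B,J,N)$, the desired conclusion $r^J_B(N) = -\infty$ is equivalent to producing the analogous vanishing $\Ext^i_B(N, B/J^n) = 0$ for $i, n$ sufficiently large. The whole problem therefore reduces to transferring Ext-vanishing from $(A, A/I^n)$ to $(B, B/J^n) = (B, A/(I^n + (x)))$.

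The first step is a change-of-rings identification. Since $M$ is MCM of dimension $d \geq 1$ and $x$ is $A$-superficial, $x$ is both $A$- and $M$-regular, so $\Tor^A_j(M, B) = 0$ for $j \geq 1$. Hence a free $A$-resolution $P_\bullet \to M$ remains exact after tensoring with $B$, giving a free $B$-resolution $P_\bullet \otimes_A B \to N$. For every $B$-module $E$ the adjunction
\[
\Hom_B(P_i \otimes_A B, E) \cong \Hom_A(P_i, E)
\]
then yields $\Ext^i_B(N, E) \cong \Ext^i_A(M, E)$. Specializing to $E = B/J^n = A/(I^n + (x))$, it suffices to show $\Ext^i_A(M, A/(I^n + (x))) = 0$ for $i, n$ large.

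The second step uses superficiality: because $x \in I$ is $A$-superficial and $A$-regular, there exists $n_1$ with $(I^n : x) = I^{n-1}$ for all $n \geq n_1$. This turns multiplication by $x$ from $A/I^{n-1}$ to $A/I^n$ into an injection and produces, for each $n \geq n_1$, the short exact sequence
\[
0 \to A/I^{n-1} \xrightarrow{x} A/I^n \to A/(I^n + (x)) \to 0.
\]
The associated long exact $\Ext_A(M,-)$ sequence contains the piece
\[
\Ext^i_A(M, A/I^n) \to \Ext^i_A(M, A/(I^n+(x))) \to \Ext^{i+1}_A(M, A/I^{n-1}),
\]
and for $i \geq i_0$ and $n \geq \max(n_0 + 1, n_1)$ both flanking terms vanish by hypothesis, forcing the middle term to vanish. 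Combined with the change-of-rings isomorphism, this delivers the required $\Ext^i_B(N, B/J^n) = 0$ and a final appeal to Theorem~\ref{equi} closes the argument. No step looks serious; the only ingredient worth pausing on is the identity $(I^n : x) = I^{n-1}$ for $n \gg 0$, which is the standard consequence (via Artin--Rees) of $A$-superficiality together with $A$-regularity of $x$. The stronger assumption that $x$ is $\Omega^i(M)$-superficial for all $i \geq 0$ is not used in this particular proposition and is presumably reserved for later results.
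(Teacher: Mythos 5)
Your proposal is correct and follows essentially the same route as the paper: the short exact sequence $0 \to A/I^{n} \xrightarrow{x} A/I^{n+1} \to B/J^{n+1} \to 0$ coming from $(I^{n+1}:x)=I^n$ for $n\gg 0$, the resulting long exact sequence in $\Ext_A(M,-)$, the vanishing criterion of Theorem~\ref{equi}, and the identification $\Ext^i_A(M,B/J^{n+1})\cong\Ext^i_B(N,B/J^{n+1})$ (which the paper asserts and you justify via $\Tor^A_{\geq 1}(M,B)=0$ and adjunction). Your closing observations --- that only the $A\oplus M$-superficiality of $x$ is actually needed here, and that $x$ is $M$-regular because $M$ is MCM of positive dimension --- are accurate.
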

\begin{proof}
As $x$ is $I$-superficial we have $(I^{n+1} \colon x) = I^n$ for $n \gg 0$ (say $n \geq n_0$). So we have an exact sequence
$$ 0 \rt A/I^n \xrightarrow{\alpha_n} A/I^{n +1} \rt B/J^{n+1} \rt 0,$$
where $\alpha_n(a + I^n) = xa + I^{n+1}$. So we get a exact sequences for all $i \geq 1$,
$$\cdots \rt \Ext^i_A(M, A/I^{n+1}) \rt \Ext^i_A(M, B/J^{n+1}) \rt \Ext^{i+1}_A(M, A/I^n) \rt \cdots.$$
As $r^I_A(M) = -\infty$ we obtain by \ref{equi}, $\Ext^i_A(M, A/I^{n}) = 0$ for $i,n \gg 0$.
So we have  $ \Ext^i_A(M, B/J^{n+1})  = 0$  for $i,n \gg 0$. It remains to note that
  $ \Ext^i_A(M, B/J^{n+1}) =  \Ext^i_B(N, B/J^{n+1})$. The result follows from \ref{equi}.
\end{proof}
\begin{remark}
If the residue field of $A$ is uncountable then we can choose $x \in I$ which  is  $A\oplus \Omega^i(M)$-superficial for all $i \geq 0$, see \cite[2.2]{Pci}.
\end{remark}
\section{Base-change}
To prove  some results later we need to do base-change to a ring $C$ with uncountable residue field which is also algebraically closed. Furthermore we also assume that $C$ is complete.

\s \label{gono} Let $(A,\m)$ be a complete intersection of co-dimension $c$ with residue field $k$.
Let $K = \ov{k((X))}$. Then by \cite[Appendice, sect 2]{BourIX} there exists a flat extension $(B, \n)$ with $\m B = \n$ and the residue field of $B$ is $K$. We note that $B$ is also a complete intersection of co-dimension $c$. Take $(C, \q)$ to be the completion of $B$.

\begin{proposition}\label{bc}
(with hypotheses as in \ref{gono}). Let $M$ be a finitely generated $A$-module and let $I$
be an $\m$-primary ideal. Then
\begin{enumerate}[\rm (1)]
\item
If $r^I_A(M) = -\infty$ then $r^{IC}_C(M\otimes_A C) = -\infty$.
\item
If  $\dim \V_A(M) \cap \V_A^\infty(I) \leq 1$ then $\dim \V_C(M\otimes_A C) \cap \V_C^\infty(IC) \leq 1$
\end{enumerate}
\end{proposition}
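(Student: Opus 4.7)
The plan is to handle the two parts of Proposition~\ref{bc} separately, both by reducing to Ext computations that transfer cleanly under the faithfully flat base change $A \to C$.

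For part (1) I would invoke the equivalence (i) $\Leftrightarrow$ (iii) of Theorem~\ref{equi}: the assumption $r^I_A(M) = -\infty$ yields integers $i_0, n_0$ with $\Ext^i_A(M, A/I^n) = 0$ for all $i \geq i_0$ and $n \geq n_0$. Since $A \to C$ is flat, a free $A$-resolution of $M$ base-changes to a free $C$-resolution of $M \otimes_A C$, giving
\[
\Ext^i_C(M \otimes_A C,\, C/(IC)^n) \;\cong\; \Ext^i_A(M, A/I^n) \otimes_A C \;=\; 0
\]
for the same range of $(i,n)$. Because $C$ is again a complete intersection of codimension $c$ and $IC$ is $\q$-primary, applying Theorem~\ref{equi} in the reverse direction gives $r^{IC}_C(M \otimes_A C) = -\infty$.

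For part (2) the core issue is to verify that support varieties commute with the base change $A \to C$. Since support varieties are defined via completion, I may assume $A$ and $C$ are complete; write $A = Q/(\bff)$ with $Q$ regular local. The flatness of $A \to C$ combined with $\m C = \q$ lifts to a flat local extension $Q \to Q'$ of regular local rings for which $\bff$ remains a regular sequence and $C = Q'/(\bff Q')$. A chosen $Q$-lift of an $A$-free resolution of $M$ base-changes to a $Q'$-lift of a $C$-free resolution of $M \otimes_A C$; tensoring the identity $\wt{\partial}^2 = \sum_j f_j \wt{t}_j$ with $Q'$ produces the Eisenbud operators for $M \otimes_A C$. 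Consequently one obtains an isomorphism
\[
\Ext^*_C(M \otimes_A C,\, K) \;\cong\; \Ext^*_A(M, k) \otimes_k K
\]
of graded modules over $K[t_1,\ldots,t_c]$, so that $\V_C(M \otimes_A C)$ is the scalar extension of $\V_A(M)$ from $\wt{k}$ to $K$ (which is already algebraically closed). The same argument applied to $A/I^n$ shows $\V_C(C/(IC)^n) = \V_A(I^n) \otimes_{\wt{k}} K$ for every $n$; stabilising in $n$ by \ref{as-ideal} yields $\V_C^\infty(IC) = \V_A^\infty(I) \otimes_{\wt{k}} K$.

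Finally, intersection and Krull dimension of affine algebraic sets defined over $k$ are preserved when the scalar field is enlarged to an algebraically closed overfield, so
\[
\dim \bigl( \V_C(M \otimes_A C) \cap \V_C^\infty(IC) \bigr) \;=\; \dim \bigl( \V_A(M) \cap \V_A^\infty(I) \bigr) \;\leq\; 1,
\]
as required. The main technical point is the compatibility of the Eisenbud operators with $Q \to Q'$, but this is essentially formal: lifts $\wt{t}_j$ over $Q$ pull back to lifts over $Q'$ decomposing the square of the base-changed differential along the same sequence $\bff$, and different choices differ by a homotopy that also base-changes. All remaining steps---the flat base change for Ext, and the geometric invariance of intersection dimensions under algebraically closed scalar extensions---are standard.
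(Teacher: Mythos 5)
Your part (1) is essentially the paper's own argument: the equivalence (i)$\Leftrightarrow$(iii) of Theorem \ref{equi} together with flat base change for Ext. For part (2), however, you take a genuinely different and heavier route. The paper never compares $\V_A(M)$ with $\V_C(M\otimes_A C)$ at all: it fixes one sufficiently large $n$, uses \ref{prop-v}(3) and \ref{prop-v}(6) to translate $\dim \V_A(M)\cap\V_A(I^n)\leq 1$ into the purely module-theoretic statement $\Ext^i_A(M,A/I^n)\cong\Ext^{i+2}_A(M,A/I^n)$ for $i\gg 0$, transports that isomorphism along the flat map $A\to C$, and applies \ref{prop-v}(6) again over $C$. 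You instead prove the stronger statement that $\V_C(M\otimes_A C)$ and $\V_C^\infty(IC)$ are the scalar extensions of $\V_A(M)$ and $\V_A^\infty(I)$ to $K$. That conclusion is correct and is a useful reusable fact (it would also reprove part (1) via \ref{prop-v}(5) and \ref{equi}(iv)), but it is not free: you must lift $A\to C$ to a flat local map of Cohen presentations $Q\to Q'$ with $Q'$ regular and $\bff Q'$ the defining ideal of $C$, which requires either performing the Bourbaki construction of \ref{gono} at the level of $Q$ and identifying the resulting completed quotient with the specific $C$ of \ref{gono} (uniqueness of the complete flat residue-field extension), or some equivalent device, together with independence of the Cohen presentation \cite[5.3]{AB}; the homotopy argument for the Eisenbud operators is then routine, as you say. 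In short: your proof is correct modulo writing out that lifting carefully, and it buys a full base-change theorem for support varieties, whereas the paper's numerical criterion via \ref{prop-v}(6) avoids all contact with presentations and disposes of part (2) in two lines.
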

\begin{proof}
(1)By \ref{equi} there exists $i_0, n_0$ such that $\Ext^{i}_A(M, A/I^n) = 0$ for $i \geq i_0$ and $n \geq n_0$. It follows that $\Ext^{i}_C(M\otimes_A C, C/I^nB) = 0$ for $i \geq i_0$ and $n \geq n_0$. So by \ref{equi} we get $r^{IC}_C(M\otimes_A C) = -\infty$.

(2) Sat $\V_A(I^n)$ is constant for $n \geq n_0$. Fix $n \geq n_0$. Then by \ref{prop-v} we have
$\Ext^{i+2}_A(M, A/I^n) \cong \Ext^{i}_A(M, A/I^n)$ for all $i \gg 0$. It follows that
$\Ext^{i+2}_C(M \otimes_A C, C/I^nC) \cong \Ext^{i}_C(M \otimes_A C, C/I^nC)$ for all $i \gg 0$. The result follows from \ref{prop-v}.
\end{proof}
\section{Reducing complexity of a module}
\s \label{red-cx} In this section $(A, \m)$ is a complete, complete intersection of codimension $c$ with algebraically closed residue field. Let $M$ be a finitely generated $A$-module.
Let $\F$ be a minimal resolution of $M$. Let $t_1, \ldots, t_c$ be the Eisenbud operators acting on $\F$.
Then $\Ext^*_A(M, k)$ is a finitely generated $S = k[t_1, \ldots, t_c]$-module.
The following result is certainly known. We sketch a proof for the convenience of the reader.
\begin{theorem}\label{red-cx-thm}(with hypotheses as in \ref{red-cx}). Set $M_i =\Omega^i_A(M)$. Assume $r = \cx_A M \geq 2$. Suppose $L_1, \ldots, L_s$ be proper linear-subvarieties of $k^c = S_2$. Then
\begin{enumerate}[\rm (1)]
  \item  There exists exact sequence $0 \rt K_i \rt M_{i+2} \rt M_i \rt 0$ for $i \gg 0$. Here $\cx K_i = r -1$ and $\Omega^1_A(K_i) = K_{i+1}$.
  \item $V(K_i) \subseteq V(M)$ and $V(K_i)\cap L_j$ is a proper linear subvariety of $L_j$ for all $j$.
\end{enumerate}
\end{theorem}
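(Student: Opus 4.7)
The plan is to construct $K_i$ as the Carlson-type module cut out by a generic cohomology operator $\xi \in S_2$, using Eisenbud's chain-level operator $t_\xi$ on the minimal resolution $\F$.

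First, I will pick $\xi \in S_2 = k^c$ generically so that (a) the hyperplane $V(\xi) \subset k^c$ contains no $r$-dimensional irreducible component of $V(M)$, and (b) $L_j \not\subseteq V(\xi)$ for every $j = 1, \ldots, s$. Each of these avoidances excludes a proper linear subspace of $S_2$ (the space of linear forms vanishing on a given component $C$ of $V(M)$, respectively on $L_j$); since $k$ is algebraically closed and hence infinite, the intersection of these open dense subsets of $S_2$ is nonempty, and a generic $\xi$ satisfies (a) and (b) simultaneously.

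Next, I realize $t_\xi = \sum_j a_j t_j$ (with $\xi = (a_1, \ldots, a_c)$) as a stable morphism $\Omega^2 M \to M$, complete it to a triangle $K \to \Omega^2 M \xrightarrow{t_\xi} M \to \Omega^{-1}K$ in $\CMS(A)$, and represent this triangle by an honest short exact sequence $0 \to K \to \Omega^2 M \oplus P \to M \to 0$ with $P$ free. For $i \gg 0$ (so that all relevant modules are MCM), applying $\Omega^i$ and absorbing the projective summands from $\Omega^i P$ into the kernel yields the desired exact sequence $0 \to K_i \to M_{i+2} \to M_i \to 0$ with $K_i = \Omega^i K$; the relation $\Omega K_i = K_{i+1}$ is immediate from the definition.

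To identify $V(K_i) = V(K)$ with $V(M) \cap V(\xi)$, I apply $\Ext^*_A(-,k)$ to the short exact sequence. The resulting long exact sequence contains the map $\Ext^j(M,k) \to \Ext^j(\Omega^2 M, k) \cong \Ext^{j+2}(M,k)$ induced by $t_\xi$, which by the defining property of the Eisenbud operator is exactly multiplication by $\xi$. Hence $\Ext^*(K,k)$ sits in an exact sequence between $\Ext^*(M,k)/\xi \Ext^*(M,k)$ and the $\xi$-torsion submodule of $\Ext^*(M,k)$ (with appropriate degree shifts). Both pieces are annihilated by $\xi$ and supported in $V(M)$, so $V(K) \subseteq V(M) \cap V(\xi)$; conversely, the first piece has support exactly $V(M) \cap V(\xi)$ by the formula $\operatorname{Supp}(N/\xi N) = \operatorname{Supp}(N) \cap V(\xi)$, so $V(K) = V(M) \cap V(\xi)$. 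Condition (a) then yields $\dim V(K_i) = r-1$, i.e.\ $\cx K_i = r-1$; and $V(K_i) \subseteq V(M)$, together with condition (b), gives $V(K_i) \cap L_j = V(M) \cap V(\xi) \cap L_j \subsetneq L_j$, a proper linear subvariety of $L_j$ cut out by the linear form $\xi|_{L_j}$.

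The main technical obstacle is the identification of the connecting map with multiplication by $\xi$ and the clean realization of the stable triangle as a short exact sequence of the prescribed shape (with $M_{i+2}$ in the middle and $M_i$ on the right), which requires careful tracking of free summands after shifting by $\Omega^i$. Both points are standard in the Avramov-Buchweitz-Eisenbud theory of cohomology operators.
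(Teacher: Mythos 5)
Your overall strategy (cut down by a generic linear cohomology operator $\xi\in S_2$, identify the connecting map with multiplication by $\xi$, and read off $\V(K)=\V(M)\cap V(\xi)$) is the same as the paper's, but there is a genuine gap in how you produce the exact sequences in the precise form the theorem asserts. The statement requires $0 \rt K_i \rt M_{i+2} \rt M_i \rt 0$ with the \emph{minimal} syzygies $M_{i+2},M_i$ and with $\Omega^1_A(K_i)=K_{i+1}$. Your route through the stable category gives $0\rt K \rt \Omega^2M\oplus P \rt M \rt 0$ with $P$ free, and applying $\Omega^i$ (horseshoe) introduces further free summands in the middle term. These cannot be ``absorbed into the kernel'': a free direct summand $F$ of the middle term of a short exact sequence onto $M_i$ can be split off only when the restriction $M_{i+2}\rt M_i$ is already surjective. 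That surjectivity is exactly the point that needs proving, and it is equivalent (via $\Tor$--$\Ext$ duality over $k$ and Nakayama) to the map $\xi\colon \Ext^j_A(M,k)\rt \Ext^{j+2}_A(M,k)$ being \emph{injective} for $j\gg 0$, i.e.\ to $(0:_{E}\xi)$ having finite length where $E=\Ext^*_A(M,k)$.

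This is where your genericity condition is too weak. You only ask that $V(\xi)$ contain no $r$-dimensional component of $\V(M)$ (plus $L_j\nsubseteq V(\xi)$). If $E$ has a lower-dimensional or embedded associated prime $P\neq \M$ with $\xi\in P$, then $(0:_E\xi)$ has positive dimension, multiplication by $\xi$ fails to be injective in infinitely many degrees, and the chain map $\F_{i+2}\rt\F_i$ fails to be surjective for infinitely many $i$; your construction then cannot yield sequences of the stated shape for all $i\gg 0$, nor the relation $\Omega^1_A(K_i)=K_{i+1}$ (which you need because a minimal free resolution of $K_{i_0}$ must be read off from the kernels of the surjections $\F_{i+2}\rt\F_i$). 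The fix is what the paper does: choose $u=\sum\alpha_jt_j$ outside $P\cap S_2$ for \emph{every} $P\in\Ass_S(E)\setminus\{\M\}$ (finitely many more proper linear subspaces to avoid, so still possible), lift to a chain map $v\colon\F(+2)\rt\F$, and use Nakayama to get honest surjections $\F_{i+2}\rt\F_i$ with free kernels $G_i$ forming a minimal resolution of $K_{i_0}$. With that correction your support-variety computation, the count $\cx K_i=r-1$, and the conclusion about $\V(K_i)\cap L_j$ all go through as you describe.
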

\begin{proof}
Set $E = \Ext^*_A(M, k)$. Set $\M$ to be maximal homogeneous ideal of $S$. Let
\[
\Ass^*(E) = \{ P \in \Ass_S E \mid P \neq \M \}.
\]
For $P \in \Ass^*(E)$ we note that $P \cap S_2$ consists of homogeneous linear polynomials in $t$. Furthemore $V(P\cap S_2) \neq S_2$.
Choose $u =\sum_{i = 1}^{c} \alpha_i t_i$ such that $u \notin P \cap S_2$ for all $P \in \Ass^*(E)$ and $L_j \nsubseteq H$ for all $j$ where $H = \{ (a_1, \ldots, a_c) \in S_2 \mid
\sum_{i=1}^{c}a_i\alpha_i = 0 \}$.

We note that $u$ is $E$-filter regular. So $(0 \colon_E u)$ has finite length. Then we have injections $\Ext^i_A(M, k) \xrightarrow{u_i} \Ext^{i+2}_A(M, k)$ for all $i \gg 0$.
Dualizing we get surjective maps $\Tor^A_{i+2}(M, k) \rt \Tor^A_i(M, k)$ for all $i \gg 0$. Take a lift $\beta_i$ in $A$ of each $\alpha_i$. Set $v = \sum_{i=1}^{c} \beta_i t_i$. Then $v  \colon \F(+2) \rt \F$ is a chain map. Furthermore by Nakayama lemma we get that $\F_{i+2} \xrightarrow{v} \F_i$ is surjective for all $i \gg 0$. So we get exact sequences for all $i \gg 0$,
\begin{enumerate}[\rm (a)]
  \item  $0 \rt G_i \rt \F_{i+2} \rt \F_{i} \rt 0$ for all $i \gg 0$, say from $i \geq i_0$.
  \item $0 \rt K_i \rt M_{i+2} \rt M_i \rt 0$ for $i \geq i_0$.
  \item  An exact sequence $0 \rt \Tor^A_n(K_i, k) \rt \Tor^A_{n+2}(M_{i+2}, k) \rt \Tor^A_n(M_i, k)$ for $n \geq 1$
\end{enumerate}
We note that $\mathbb{G}[i_0] \rt K_{i_0}$ is a projective resolution.  It is also minimal as $\F$ is minimal. So $\Omega^1_A(K_i) = K_{i+1}$ for $ i \geq i_0$. By (c) it follows that $\cx_A K_{i_0} = r -1$.

We have an exact sequence $$0 \rt \Ext^n_A(M,k) \xrightarrow{u_n} \Ext^{n+2}_A(M,k) \rt \Ext^{n-i_0}(K_{i_0}, k) \rt 0,$$ for $n \geq i_0 + 1$. We not that $\Ext^{\geq 1}_A(K_{i_0}, k) \subseteq E/uE[i_0]$.  It follows that $\V(K_{i_0}) \subseteq \V(M) \cap V(u)$ We note that $V(u)\cap L_j$ is a proper linear sub-variety of $L_j$. The result follows.
\end{proof}
\section{Some Properties of $L^{I}(M)$}\label{Lprop}

In this section we collect some of  the properties of $L^{I}(M) = \bigoplus_{n\geq 0}M/I^{n+1}M $ which we proved in \cite{Pu5}. Throughout this section
$(A,\m)$ is a \CM \ local ring with infinite residue field, $M$ is a \emph{\CM }\ module of dimension $r \geq 1$ and $I$ an $\m$-primary ideal.

\s \label{mod-struc} Set $\R = A[It]$;  the Rees Algebra of $I$.
The $A$-module $L^I(M)$ can be given an $R(I)$-module structure as follows.
The Rees ring $\R$ is a subring of $A[t]$ and so $A[t]$ is an $\R$-module.
Therefore $M[t] = M\otimes_A A[t]$ is an $\R$-module.
Let $R(I,M) = \bigoplus_{n \geq 0}I^nM$ be the Rees module of $M$ \wrt \ $I$. It is a finitely generated $\R$-module.
 The exact sequence
\[
0 \xar R(I,M) \xar M[t] \xar L^I(M)(-1) \xar 0
\]
defines an $\R$-module structure on $ L^I(M)(-1)$ and so on $ L^I(M)$.

 Note that $L^I(M)$ is \emph{not a finitely generated} $\R(I)$-module.

\s Let $\M$ denote the maximal homogeneous ideal of $\R$. Let $H^{i}(-) = H^{i}_{\M}(-)$ denote the $i^{th}$-local cohomology functor \wrt \ $\M$.
 Recall a graded $\R $-module $V$ is said to be
\textit{*-Artinian} if
every descending chain of graded submodules of $V$ terminates. For example if $E$ is a finitely generated $\R$-module then $H^{i}(E)$ is *-Artinian for all
$i \geq 0$.

\s \label{zero-lc} In \cite[4.7]{Pu5} we proved that
\[
H^{0}(L^I(M)) = \bigoplus_{n\geq 0} \frac{\wt{I^{n+1}M}}{I^{n+1}M}.
\]
Here $\widetilde{KM}$ denotes the Ratliff-Rush closure of $K$ \wrt \  $M$.  Recall
\[
\widetilde{KM} = \bigcup_{i \geq 1}K^{i+1}M \colon K^i.
\]
\s \label{Artin}
For $L^I(M)$ we proved that for $0 \leq i \leq  r - 1$
\begin{enumerate}[\rm (a)]
\item
$H^{i}(L^I(M))$ are  *-Artinian; see \cite[4.4]{Pu5}.
\item
$H^{i}(L^I(M))_n = 0$ for all $n \gg 0$; see \cite[1.10 ]{Pu5}.
\item
 $H^{i}(L^I(M))_n$  has finite length
for all $n \in \mathbb{Z}$; see \cite[6.4]{Pu5}.
\item
For $0 \leq i \leq r-1$
there exists polynomial $q_i(z) \in \mathbb{Q}[z]$ such that $q_i(n) = \ell(H^i(L^I(M))_n)$ for all $n \ll 0$.

\end{enumerate}
\s \label{I-FES} The natural maps $0\rt I^n/I^{n+1} \rt M/ I^{n+1}\rt M/I^n \rt 0 $ induce an exact
sequence of $\R$-modules
\begin{equation}
\label{dag}
0 \xar G_{I}(M) \xar L^I(M) \xrightarrow{\Pi} L^I(M)(-1) \xar 0.
\end{equation}
We call (\ref{dag}) \emph{the first fundamental exact sequence}.  We use (\ref{dag}) also to relate the local cohomology of $G_I(M)$ and $L^I(M)$.

\s If $E = \bigoplus_{n \in \Z}E_n$ is a $\R$-module is non-zero, set $\e E = \max \{ n \mid E_n \neq 0 \}$.  Set $\e 0 = -\infty$.
Set $b_i^I(M) = \e H^i(L^I(M))$ for $i = 0,\ldots, r-1$. Set
$$\reg L^I(M) = \max\{ b_i^I(M) + i \mid 0\leq i \leq r -1 \}.$$
Also set
$$\reg^{\geq 1} L^I(M) = \max\{ b_i^I(M) + i \mid 1\leq i \leq r -1 \}.$$

\s\label{compare} Set $a_i(G_I(M)) = \e H^i(G_I(M))$. Now assume that $M$ is a MCM $A$-module. We have a surjection $A^r \rt M \rt 0$, yields a surjection $G_I(A^r) \rt G_I(M) \rt 0$. So we have $a_d(G_I(M)) \leq a_d(G_I(A))$. It follows from this and \ref{I-FES} that
\[
\reg G_I(M) \leq \max \{ \reg L^I(M) + 2, a_d(G_I(A)) + d \}.
\]

\s \label{dim1} Assume  $\dim A =  1$ and $M$ is a MCM $A$-module. Then note $b_0^I(M) \leq a_1(G_I(M)) - 1 \leq a_1(G_I(A)) -1$. By \ref{compare} it follows that $\reg G_I(M)$ is bounded by
$a_1(G_I(A)) + 1$. This bound only depends on $I$ and is independent of $M$.

\s \label{II-FES} Let $x$ be  $M$-superficial \wrt \ $I$, i.e., $(I^{n+1}M \colon x) = I^nM$ for all $n \gg 0$. Set  $N = M/xM$ and $u =xt \in \R_1$. Notice $L^I(M)/u L^I(M) = L^I(N)$.
For each $n \geq 1$ we have the following exact sequence of $A$-modules:
\begin{align*}
0 \xar \frac{I^{n+1}M\colon x}{I^nM} \xar \frac{M}{I^nM} &\xrightarrow{\psi_n} \frac{M}{I^{n+1}M} \xar \frac{N}{I^{n+1}N} \xar 0, \\
\text{where} \quad \psi_n(m + I^nM) &= xm + I^{n+1}M.
\end{align*}
This sequence induces the following  exact sequence of $\R$-modules:
\begin{equation}
\label{dagg}
0 \xar \B^{I}(x,M) \xar L^{I}(M)(-1)\xrightarrow{\Psi_u} L^{I}(M) \xrightarrow{\rho^x}  L^{I}(N)\xar 0,
\end{equation}
where $\Psi_u$ is left multiplication by $u$ and
\[
\B^{I}(x,M) = \bigoplus_{n \geq 0}\frac{(I^{n+1}M\colon_M x)}{I^nM}.
\]
We call (\ref{dagg}) the \emph{second fundamental exact sequence. }

\s \label{long-mod} Notice  $\lambda\left(\B^{I}(x,M) \right) < \infty$. A standard trick yields the following long exact sequence connecting
the local cohomology of $L^I(M)$ and
$L^I(N)$:
\begin{equation}
\label{longH}
\begin{split}
0 \xar \B^{I}(x,M) &\xar H^{0}(L^{I}(M))(-1) \xar H^{0}(L^{I}(M)) \xar H^{0}(L^{I}(N)) \\
                  &\xar H^{1}(L^{I}(M))(-1) \xar H^{1}(L^{I}(M)) \xar H^{1}(L^{I}(N)) \\
                 & \cdots \cdots \\
               \end{split}
\end{equation}

We will need the following consequence of (\ref{longH}).
\begin{lemma}\label{reg1}
Let $M$ be \CM \ $A$-module of dimension $r \geq 2$. Let $x \in I$ be $M$-superficial. Let $N = M/xM$. Then $\reg^{\geq 1}L^I(M) \leq \reg L^I(N)$.
\end{lemma}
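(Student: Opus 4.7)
The plan is to push the second fundamental exact sequence (\ref{dagg}) through local cohomology and use the resulting long exact sequence (\ref{longH}) together with the *-Artinian / eventual vanishing properties recorded in \ref{Artin}. Since $M$ is CM and $x \in I$ is $M$-superficial, $x$ is $M$-regular, so $N = M/xM$ is CM of dimension $r-1$. Let $u = xt \in \R_1$. For each $i \geq 1$ and each $n$, (\ref{longH}) contains the four-term piece
\[
H^{i-1}(L^I(N))_n \longrightarrow H^i(L^I(M))_{n-1} \xrightarrow{\ u\ } H^i(L^I(M))_n \longrightarrow H^i(L^I(N))_n,
\]
so the kernel of $u \colon H^i(L^I(M))_{n-1} \to H^i(L^I(M))_n$ is a quotient of $H^{i-1}(L^I(N))_n$. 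In particular, whenever $n > b_{i-1}^I(N)$, this kernel vanishes, and $u$ is injective on $H^i(L^I(M))_{n-1}$.

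Now fix $i$ with $1 \leq i \leq r-1$. Then $i-1$ runs through $0,\ldots,r-2 = \dim N - 1$, so $b_{i-1}^I(N)$ is well-defined (and, by \ref{Artin}(b), finite or $-\infty$). Setting $m_0 = b_{i-1}^I(N)$, the previous paragraph gives injections
\[
H^i(L^I(M))_{m_0} \hookrightarrow H^i(L^I(M))_{m_0 + 1} \hookrightarrow H^i(L^I(M))_{m_0 + 2} \hookrightarrow \cdots .
\]
Since $i \leq r-1 = \dim M - 1$, \ref{Artin}(b) applies to $M$ and gives $H^i(L^I(M))_n = 0$ for $n \gg 0$. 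Composing the chain of injections into a vanishing term forces $H^i(L^I(M))_m = 0$ for every $m \geq m_0$, so $b_i^I(M) \leq b_{i-1}^I(N) - 1$ and therefore
\[
b_i^I(M) + i \ \leq\ b_{i-1}^I(N) + (i-1) \ \leq\ \reg L^I(N).
\]
Taking the maximum over $1 \leq i \leq r-1$ yields $\reg^{\geq 1} L^I(M) \leq \reg L^I(N)$.

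The subtle point, and the reason for phrasing the argument via injectivity rather than surjectivity of $u$, is the top index $i = r-1$. There one has no a priori control on the high-degree behaviour of $H^{r-1}(L^I(N))$, because $\dim N = r-1$ puts $H^{r-1}(L^I(N))$ outside the range covered by \ref{Artin}. However, the kernel-of-$u$ argument only involves $H^{i-1}(L^I(N)) = H^{r-2}(L^I(N))$, which is safely inside that range, while the eventual vanishing used to collapse the injective chain is drawn from $M$, where \ref{Artin}(b) does apply at level $r-1$. This asymmetric use of the long exact sequence is what makes the bound go through uniformly in $i$.
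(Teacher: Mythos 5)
Your argument is correct and is essentially the paper's own proof, just indexed by $i$ instead of $i+1$: in both cases one reads off from the long exact sequence (\ref{longH}) that multiplication by $u$ on $H^i(L^I(M))$ is injective in degrees beyond $b_{i-1}^I(N)$, and then the eventual vanishing $H^i(L^I(M))_n = 0$ for $n \gg 0$ collapses the chain of injections to give $b_i^I(M) \leq b_{i-1}^I(N) - 1$. Your closing remark about why the argument must lean on the vanishing of $H^{i-1}(L^I(N))$ rather than anything at level $r-1$ of $N$ is a fair gloss on the same mechanism, not a different route.
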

\begin{proof}
Let $0 \leq i \leq r-2$. If $H^i(L^I(N))_n = 0$ for $n \geq c$ then by \ref{longH} we have injections $H^{i+1}(L^I(M))_{n -1} \rt H^{i+1}(L^I(M))_n$ for $n \geq c$. As $H^{i+1}(L^I(M))_n = 0$
for $n \gg 0$ it follows that $b_{i+1}^I(M) \leq b^I_{i}(N) - 1$. The result follows.
\end{proof}

\s\label{Veronese} One huge advantage of considering $L^I(M)$ is that it behaves well \wrt \ the Veronese functor. Notice
\[
\left(L^I(M)(-1)\right)^{<t>} = L^{I^t}(M)(-1) \quad \text{for all} \ t \geq 1.
\]
Also note that $\R(I)^{<t>} = \R(I^t)$ and that $(\M_{\R(I)})^{<t>} = \M_{\R(I^t)}$. It follows that for all $i \geq 0$
\[
\left(H^i_{\M_{\R(I)}}(L^I(M)(-1)\right)^{<t>} \cong H^i_{\M_{\R(I^t)}}(L^{I^t}(M)(-1).
\]

We will need the following result.
\begin{lemma}
\label{h0}
Let $M$ be \CM \ $A$-module of dimension $r \geq 2$. Let $x \in I$ be $M$-superficial. Let $N = M/xM$. Set $b = \e H^0(L^I(N))$. Let $t = \e H^0(L^{I^m}(M))$ where $m > b$.
Then $$\e H^0(L^I(M)) \leq m(t+2) - 1.$$
\end{lemma}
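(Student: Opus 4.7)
The plan is to combine two ingredients already in hand: the Veronese identification of local cohomology from \ref{Veronese}, which converts information about $H^0(L^{I^m}(M))$ into vanishing of $H^0(L^I(M))$ at the arithmetic progression of degrees $mk-1$, and the long exact sequence \eqref{longH} coming from the second fundamental exact sequence, which lets us propagate vanishing of $H^0(L^I(M))$ from one degree to the next as long as we are past the edge $b$ of $H^0(L^I(N))$.

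First I would apply the Veronese isomorphism $\bigl(H^0(L^I(M))(-1)\bigr)^{<m>} \cong H^0(L^{I^m}(M))(-1)$ recorded in \ref{Veronese}. Taking degree-$n$ components identifies $H^0(L^I(M))_{mn-1}$ with $H^0(L^{I^m}(M))_{n-1}$. Since $\e H^0(L^{I^m}(M)) = t$, the latter vanishes whenever $n - 1 > t$, that is, for $n \ge t+2$. In particular, choosing $n = t+2$ gives $H^0(L^I(M))_{m(t+2)-1} = 0$.

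Next I would read off from \eqref{longH}, in each degree $n$, the four-term piece
\[
H^0(L^I(M))_{n-1} \xrightarrow{\Psi_u} H^0(L^I(M))_{n} \longrightarrow H^0(L^I(N))_{n}.
\]
Since $\e H^0(L^I(N)) = b$, the rightmost term vanishes for $n > b$, so $\Psi_u$ is surjective in that range. Hence $H^0(L^I(M))_{n-1} = 0$ together with $n > b$ forces $H^0(L^I(M))_n = 0$.

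Finally, I would iterate this propagation step starting from the vanishing at degree $m(t+2)-1$: for $n = m(t+2), m(t+2)+1, \ldots$, each such $n$ satisfies $n > b$ because $n \ge m(t+2) \ge 2m > b$ (using $m > b$ and $t+2 \ge 2$; the degenerate case $t = -\infty$ is trivial). Induction on $n$ gives $H^0(L^I(M))_n = 0$ for all $n \ge m(t+2)-1$, so $\e H^0(L^I(M)) \le m(t+2)-2 \le m(t+2)-1$. I do not foresee any serious obstacle; the only point requiring care is tracking the degree shift by $(-1)$ on both sides of the Veronese identification and the fact that $\Psi_u$ raises degree by one.
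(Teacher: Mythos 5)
Your argument is correct and is essentially the paper's own proof: both rest on the surjections $H^0(L^I(M))_{n-1}\rightarrow H^0(L^I(M))_n$ for $n>b$ coming from \eqref{longH}, together with the vanishing of $H^0(L^I(M))$ in degree $m(t+2)-1$ extracted from the $I^m$-data. The only (cosmetic) difference is that the paper phrases these two steps through the Ratliff--Rush description of $H^0$ in \ref{zero-lc}, whereas you invoke the Veronese identification \ref{Veronese} directly, which if anything makes the comparison between the $I$- and $I^m$-filtrations more explicit.
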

\begin{proof}
By (\ref{longH}) and \ref{zero-lc} we get surjections for $n > b$.
\[
\frac{\wt{I^n M}}{I^nM} \rt \frac{\wt{I^{n+1}M}}{I^{n+1}M}.
\]
We have by \ref{zero-lc}
\[
\wt{I^{m(t+2)}M} = I^{m(t+2)}M.
\]
It follows that $\wt{I^jM} = I^jM$ for all $j \geq m(t+2)$. The result follows.
\end{proof}

\section{proof of Theorem  \ref{special}}
In this section we prove Theorem \ref{special}. The proof consists of first proving the case for large powers of $I$ and then later concluding it for $I$.
We first prove the following:
\begin{theorem}
\label{first}Let $(A,\m)$ be a complete intersection of dimension $d \geq 1$ and codimension $c \geq 1$. Let $I$ be an $\m$-primary ideal and let $M$ be an MCM $A$-module with $r^I(M) = -\infty$.
Let $\V(I^m) = \V(I^{s})$ for $m \geq s$. Fix $m \geq s$. Then
$\{ \reg L^{I^m}(\Omega^r(M) \}_{r \geq 0}$ is bounded.
\end{theorem}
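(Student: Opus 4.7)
The plan is to induct on $d = \dim A$, treating the higher local cohomology groups $H^i(L^{I^m}(\Omega^r(M)))$ for $i \geq 1$ via the second fundamental exact sequence (Lemma \ref{reg1}) after passing to a quotient $B = A/(x)$, and bounding $\e H^0(L^{I^m}(\Omega^r(M)))$ separately through the uniform Artin--Rees identities that flow from the hypothesis $r^I(M) = -\infty$. The base case $d = 1$ is immediate: any MCM $A$-module $N$ satisfies $\reg L^{I^m}(N) = \e H^0(L^{I^m}(N)) \leq a_1(G_{I^m}(A)) - 1$ by \ref{dim1} together with \ref{compare}, a bound depending only on $A$ and $I^m$, hence uniform in $r$.

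For the inductive step $d \geq 2$, first apply Proposition \ref{bc} to reduce to the case where $A$ has an uncountable algebraically closed residue field; this preserves $r^I(M) = -\infty$. Uncountability of the residue field lets one choose $x \in I^m$ that is simultaneously $A$-regular and $\Omega^r_A(M)$-superficial with respect to $I^m$ for every $r \geq 0$. Set $B = A/(x)$, a complete intersection of dimension $d - 1$. Since each $\Omega^r_A(M)$ is MCM and $x$ is $A$-regular, tensoring a minimal $A$-resolution of $M$ with $B$ yields a (generally non-minimal) $B$-resolution of $M/xM$, so $\Omega^r_A(M)/x\Omega^r_A(M) \cong \Omega^r_B(M/xM) \oplus B^{b_r}$ for some $b_r \geq 0$, and $r^{IB}_B(M/xM) = -\infty$ by Proposition \ref{mod-sup}. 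The inductive hypothesis applied to $B$ (at the exponent $m$, assuming $m$ exceeds the stability index $s_B$ of $(B, IB)$; see the obstacle remark below) bounds $\{\reg L^{(IB)^m}(\Omega^r_B(M/xM))\}_r$ uniformly. Combining with the fixed constant $\reg L^{(IB)^m}(B)$ and the identification $L^{I^m}_A = L^{(IB)^m}_B$ on $B$-modules, this gives a uniform bound on $\{\reg L^{I^m}(\Omega^r_A(M)/x\Omega^r_A(M))\}_r$. Applying Lemma \ref{reg1} with $I$ replaced by $I^m$ (valid since $x \in I^m$) then produces $\reg^{\geq 1} L^{I^m}(\Omega^r_A(M)) \leq \reg L^{I^m}(\Omega^r_A(M)/x\Omega^r_A(M))$, uniformly bounded in $r$.

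The $H^0$ bound requires no induction. By \ref{prop-v}(4) and Theorem \ref{supp} (or directly via \ref{red-mcm}), $r^I(\Omega^r(M)) = -\infty$ for every $r$, so Theorem \ref{equi}(v) yields $\Tor^A_i(\Omega^r(M), A/I^n) = 0$ for all $i \geq 1$ and $n \geq s$. Tensoring $0 \to \Omega^{r+1}(M) \to F_r \to \Omega^r(M) \to 0$ with $A/I^n$ gives the uniform Artin--Rees identity $I^n F_{r-1} \cap \Omega^r(M) = I^n\,\Omega^r(M)$ for $r \geq 1$ and $n \geq s$. Unwinding the definition of Ratliff--Rush closure and using freeness of $F_{r-1}$ to pull the saturation into an $A$-level computation, one obtains
\[
\wt{(I^m)^k\,\Omega^r(M)} \;=\; \bigl(\wt{(I^m)^k}\bigr)\, F_{r-1} \cap \Omega^r(M) \;=\; (I^m)^k\,\Omega^r(M)
\]
whenever $mk$ exceeds both $s$ and the Ratliff--Rush stabilization index of $A$ with respect to $I^m$; both thresholds are independent of $r$. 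Hence $\e H^0(L^{I^m}(\Omega^r(M)))$ is uniformly bounded for $r \geq 1$, and the $r = 0$ case contributes only a single finite constant. The \emph{main obstacle} is synchronizing stability indices across the induction: the index $s_B$ for $(B, IB)$ need not be bounded by $m$, so to apply the inductive hypothesis at exponent $m$ one may first need to work at a higher Veronese exponent $m\ell \geq s_B$ and descend carefully via \ref{Veronese}, or else verify that a sufficiently generic choice of $x$ forces $s_B \leq m$.
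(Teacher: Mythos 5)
Your proof has a genuine unresolved gap, and it is exactly the one you flag at the end: the dimension induction does not close. To bound $\reg^{\geq 1} L^{I^m}(\Omega^r_A(M))$ via Lemma \ref{reg1} you must invoke the statement of the theorem for $(B, IB)$ at the \emph{same} exponent $m$, but the hypothesis of the theorem downstairs is $m \geq s_B$, where $s_B$ is the index from which $\V_B((IB)^n)$ stabilizes, and nothing relates $s_B$ to $s$ or to $m$. Neither of your suggested repairs works as stated: a generic choice of $x$ is not known to force $s_B \leq m$, and the Veronese descent of \ref{Veronese} from $L^{(IB)^{m\ell}}$ back to $L^{(IB)^{m}}$ only controls $H^i$ in the arithmetic progression of degrees that the $\ell$-th Veronese sees; for $i \geq 1$ there is no analogue of the surjections exploited in Lemma \ref{h0} to interpolate to all large degrees, so boundedness of $\reg L^{(IB)^{m\ell}}$ does not yield boundedness of $\reg L^{(IB)^{m}}$. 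This is precisely why the paper does \emph{not} prove Theorem \ref{first} by induction on dimension. It inducts on $\cx_A M$ instead: the cases $\cx M \leq 1$ are trivial (only finitely many syzygies up to isomorphism), and for $\cx M \geq 2$ the complexity-reduction sequences $0 \rt K_i \rt M_{i+2} \rt M_i \rt 0$ of \ref{red-cx-thm}, combined with the Tor-vanishing $\Tor^A_1(M_i, A/J^n) = 0$ for \emph{all} $n \geq 1$ (available because $J = I^m$ with $m \geq s$, by \ref{equi}(v)), remain exact after applying $L^J(-)$, whence $\reg L^J(M_{i+2}) \leq \max\{\reg L^J(K_i), \reg L^J(M_i)\}$, and one iterates using the inductive bound on $\reg L^J(\Omega^r(K_{i_0}))$. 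The dimension induction you attempt is reserved for Theorem \ref{special}, where it is run at the level of $I$ itself (so no exponent needs synchronizing) and where Theorem \ref{first} enters as an already-proved input.

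On the positive side, your $H^0$ computation is correct and is genuinely different from (and arguably cleaner than) the paper's route: since $\Tor^A_1(\Omega^{r-1}(M), A/J^k) = (J^kF_{r-1}\cap \Omega^r(M))/J^k\Omega^r(M)$ vanishes for all $k \geq 1$, and since any $u \in \wt{J^k\Omega^r(M)}$ satisfies $J^{ki}u \subseteq J^{k(i+1)}F_{r-1}$ and hence lies in $(J^{k(i+1)} \colon_A J^{ki})F_{r-1} \subseteq \wt{J^k}F_{r-1} = J^kF_{r-1}$ once $k$ exceeds the Ratliff--Rush stabilization index of $J$ in $A$, one gets $\wt{J^k\Omega^r(M)} = J^k\Omega^r(M)$ uniformly in $r \geq 1$. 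The paper instead deduces the $H^0$ bound (in the proof of Theorem \ref{special}) by combining Theorem \ref{first} with Lemma \ref{h0}. But a correct bound on $H^0$ alone does not rescue the proposal, since the higher local cohomology is exactly where your induction breaks down.
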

\begin{proof}
Set $J = I^m$.
We induct on complexity $\cx M$ of $M$. If $\cx M = 0$ then $M$ is free and so we have nothing to show. If $\cx M = 1$ then $M$ has periodic resolution, So again we have nothing to show.
So assume that $\cx M  = t \geq 2$ and we have proved result when complexity of the module is $t -1$. Set $M_i = \Om^{i}(M)$ for $i \geq 0$. We have by \ref{red-cx-thm} there exists an exact sequence
$$ 0 \rt K_i \rt M_{i+1} \rt M_i \rt 0, \quad \text{for} \ i \geq  i_0.$$
We also have $K_{i+1} = \Om(K_i)$ for all $ i \geq i_0$ and $\cx_A K_{i_0} = \cx M - 1 = t -1$.
We note that for all $n$ we have $\Tor^A_i(M, A/I^n) = 0$ for $i \geq 1$, see \ref{equi}. We have for all $j \geq i_0 + 1$
an exact sequence
\[
0 \rt L^J(K_i) \rt L^J(M_{i+2}) \rt L^J(M_i) \rt 0
\]
and $\Tor^A_i(K, A/I^n) = 0$. It follows that $r^I(K) = - \infty$. So by induction hypotheses $\{ \reg L^{J}(\Omega^r(K_{i_0})) \}_{r \geq 0}$ is bounded (say by $\alpha$).
We have $\reg L^J(M_{j_0 + 2}) \leq \max \{ \alpha, \reg L^J(M_{j_0}) \}.$
We also have
$$ \reg L^J(M_{j_0 + 4}) \leq \max \{ \alpha, \reg L^J(M_{j_0 + 2}) \} \leq \max \{ \alpha, \reg L^J(M_{j_0}) \}.$$
Iterating we obtain
$$ \reg L^J(M_{j_0 + 2n})  \leq \max \{ \alpha, \reg L^J(M_{j_0}) \} \quad \text{for all } \ n \geq 1.$$
Similarly we obtain
$$ \reg L^J(M_{j_0 + 2n+1})  \leq \max \{ \alpha, \reg L^J(M_{j_0 + 1}) \} \quad \text{for all } \ n \geq 1.$$
The result follows.
\end{proof}

We now give
\begin{proof}[Proof of Theorem \ref{special}]
By \ref{gono} we may assume that $A$ is complete
and that the residue field of $A$ is uncountable and algebraically closed. We prove the result by induction on $\dim A$. By \ref{compare} it suffices to show $\{ \reg L^{I}(\Omega^r(M) \}_{r \geq 0}$ is bounded.
When $d = 1$ the result holds in general, see \ref{dim1}.

Assume $d \geq 2$ and the result holds when $\dim A = d -1$. Let $x$ be $A\oplus \Omega^i(M)$-superficial for $i \geq 0$. Set $B = A/(x)$, $J = I/(x)$ and $N = M/xM$.
By induction hypotheses there exists $\alpha$ such that $\reg L^J(\Omega^i(N)) \leq \alpha$ for all $i \geq 0$.
It follows that $\reg^{\geq 1} L^I(\Omega^i(M)) \leq \alpha$ for all $i \geq 0$, see \ref{reg1}.  Assume $\V_A(I^t) = \V_A(I^s)$ for all $t \geq s$. Fix $m > \max \{ \alpha, s \}$. By \ref{first}
there exists $\beta$ such that $$\e H^0(L^{I^m}(\Omega^i(M))) \leq \beta \ \text{for all}\ i \geq 0. $$
It follows from \ref{h0}   that
 $$\e H^0(L^{I}(\Omega^i(M))) \leq m(\beta + 2) \ \text{for all}\ i \geq 0. $$
So we have
 $$\reg L^I(\Omega^i(M)) \leq  m(\beta + 2) \quad \text{ for all $i \geq 0$}.$$
 The result follows.
\end{proof}

\section{Proof of Theorem \ref{special-cx1}}

We first prove the following:
\begin{theorem}
\label{first-cx1}Let $(A,\m)$ be a complete intersection of dimension $d \geq 1$ and codimension $c \geq 1$. Let $I$ be an $\m$-primary ideal and let $M$ be an MCM $A$-module with
 $\dim \V_A^\infty(I) \cap \V_A(M) \leq 1$
Let $\V(I^m) = \V(I^{s})$ for $m \geq s$. Fix $m \geq s$. Then the set
$\{ \reg L^{I^m}(\Omega^r(M) \}_{r \geq 0}$  is bounded.
\end{theorem}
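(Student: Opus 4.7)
The plan is to mirror the proof of Theorem \ref{first} by induction on $c = \cx_A M$, using the relaxed hypothesis $\dim \V_A(M) \cap \V_A^\infty(I) \leq 1$ in place of $r^I(M) = -\infty$. First I would base change via \ref{gono} (the dimension hypothesis being preserved by \ref{bc}) so that $A$ is complete with uncountable algebraically closed residue field. The cases $c = 0$ (free module) and $c = 1$ (eventually periodic resolution) are immediate as in Theorem \ref{first}, so one may assume $c \geq 2$. Moreover, if $\dim \V_A(M) \cap \V_A^\infty(I) = 0$ then $r^I(M) = -\infty$ by Theorem \ref{equi} and the conclusion is Theorem \ref{first} directly, so the essential new case is $\dim = 1$.

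For the inductive step, observe that $\V_A(M) \cap \V_A^\infty(I)$ is a $1$-dimensional cone, hence a finite union of lines $L_1, \ldots, L_s$ through the origin. Each $L_j$ is a proper linear subvariety of $\wt{k}^c$, so Theorem \ref{red-cx-thm} produces, for $i \geq i_0$, short exact sequences $0 \to K_i \to M_{i+2} \to M_i \to 0$ with $\cx_A K_{i_0} = c - 1$, $K_{i+1} = \Om(K_i)$, $\V_A(K_i) \subseteq \V_A(M)$, and $\V_A(K_i) \cap L_j$ a proper linear subvariety of $L_j$. Since $L_j$ is a line, the only such subvariety is $\{0\}$, giving $\V_A(K_i) \cap \V_A^\infty(I) = \{0\}$, i.e.\ $r^I(K_i) = -\infty$ by Theorem \ref{equi}. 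Theorem \ref{first} then supplies a uniform bound $\reg L^{I^m}(K_i) \leq \alpha$ for all $i \geq i_0$.

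The main obstacle, compared with Theorem \ref{first}, is the absence of Tor-vanishing for $M$ itself, so the sequences obtained by tensoring are not \emph{a priori} left exact. I plan to sidestep this via a periodicity argument. Set $N = A/I^{mn+m}$; by \ref{equi}(v) applied to $K_{i_0}$ we have $\Tor^A_j(K_i, N) = 0$ for all $j \geq 1$ and all $n \geq 0$. The Tor long exact sequence derived from $0 \to K_i \to M_{i+2} \to M_i \to 0$ therefore yields $\Tor^A_j(M_{i+2}, N) \cong \Tor^A_j(M_i, N)$ for all $j \geq 2$, which, via the standard identification $\Tor^A_j(\Om^i M, N) = \Tor^A_{j+i}(M, N)$, translates into the $2$-periodicity $\Tor^A_p(M, N) \cong \Tor^A_{p+2}(M, N)$ for $p \geq i_0 + 2$. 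Consequently $\Tor^A_1(M_{i+2}, N) \cong \Tor^A_1(M_i, N)$ abstractly for $i \geq i_0 + 1$, and the injection $\Tor^A_1(M_{i+2}, N) \hookrightarrow \Tor^A_1(M_i, N)$ arising from the long exact sequence is an injection between two finite-length modules of equal length, hence an isomorphism. Its cokernel vanishes, so
\[
0 \to K_i \otimes_A N \to M_{i+2} \otimes_A N \to M_i \otimes_A N \to 0
\]
is exact for all $n \geq 0$.

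Assembling over $n$ produces an exact sequence of graded $\R(I^m)$-modules $0 \to L^{I^m}(K_i) \to L^{I^m}(M_{i+2}) \to L^{I^m}(M_i) \to 0$, whence $\reg L^{I^m}(M_{i+2}) \leq \max\{\alpha, \reg L^{I^m}(M_i)\}$ for $i \geq i_0 + 1$. Iterating along even and odd indices exactly as in the conclusion of the proof of Theorem \ref{first} yields a uniform bound on $\reg L^{I^m}(\Om^r M)$ depending only on $\alpha$ and the finitely many initial regularities $\reg L^{I^m}(\Om^r M)$ for $r \leq i_0 + 2$, completing the induction.
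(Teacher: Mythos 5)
Your proposal is correct and follows essentially the same route as the paper: one application of Theorem \ref{red-cx-thm}, with the finitely many lines of $\V_A(M)\cap\V_A^\infty(I)$ as the prescribed linear subvarieties, produces $K_i$ with $\V_A(K_i)\cap\V_A^\infty(I)=\{0\}$, and the Tor-periodicity plus length-count argument forcing exactness of the $L^{I^m}$-sequences is exactly the paper's device before iterating as in Theorem \ref{first}. (Your ``induction on complexity'' framing is superfluous, since Theorem \ref{first} is invoked directly for $K_{i_0}$, but this is harmless.)
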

\begin{proof}
Set $M_i = \Om^i(M)$. Set $J = I^m$.
By \ref{bc} we may assume that $A$ is complete and the residue field of $A$ is uncountable and algebraically closed.

Notice $\V_A^\infty(I) \cap \V_A(M)$ is a finite union of lines passing through the origin.
By \ref{red-cx-thm}  we may choose a section $0 \rt D_i \rt M_{i+2} \rt M_i \rt 0$ for $i \geq t$ with $\V_A(J) \cap V_A(D_{t}) = \{ 0 \}$. Recall
$D_{i+1} = \Om^1(D_i)$ for all $i \geq t$.
So by \ref{equi}, $\Tor^A_j(D, A/J^n) = 0$ for $j > 1$ and for $n \geq 1$. It follows that for $j \geq t + 2$ we have $\ell(\Tor^A_j(M_{t + 2}, A/J^n )) = \ell(\Tor^A_j(M_t, A/J^n))$ for $n \geq 1$.
So for $j \geq  t +2$ we have a short exact sequence of $\R(J)$-modules
$$ 0 \rt L^J(D_j) \rt L^J(M_{j + 2}) \rt L^J(M_j) \rt  0. $$
By \ref{first} we have $\reg L^J(D_j)$  for $j \geq t +2$ is bounded. By an argument similar to the proof of \ref{first} it follows that $\reg L^J(M_j)$ is bounded.
\end{proof}

The following result is needed in the proof of Theorem \ref{special-cx1}.
\begin{lemma}\label{mod-cx1}
(with hypotheses as in \ref{first-cx1}). Further assume that $A$ is complete and  the residue field of  $A$ is uncountable and algebraically closed. Set $M_i = \Om^i_A(M)$. Then there exists $x$ which is $M_i \oplus A$-superficial \wrt \ $I$  for all $i$ such that
$$\dim \V_{A/(x)}^\infty(I/(x)) \cap \V_{A/(x)}(M/xM) \leq 1.$$
\end{lemma}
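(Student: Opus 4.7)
The plan is to pick $x$ by a generic-position argument and then reduce the desired dimension bound to the given hypothesis by proving two inclusions inside a common ambient $k^c$: $\V_B(M/xM) = \V_A(M)$ and $\V_B^\infty(J) \subseteq \V_A^\infty(I)$, where $B = A/(x)$ and $J = IB$.

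Since $k$ is uncountable and algebraically closed, I will choose $x \in I \cap (\m \setminus \m^2)$ that is $(A \oplus M_i)$-superficial with respect to $I$ for every $i \geq 0$: each such superficiality condition is Zariski-open and dense on $(I + \m^2)/\m^2$, so uncountability lets us intersect countably many such conditions with the non-empty open condition $x \notin \m^2$ (cf.\ \cite[2.2]{Pci} and the remark after Proposition~\ref{mod-sup}). Since $M$ is MCM, $x$ is $M$-regular, and Proposition~\ref{var-mod-x} then yields $\V_B(M/xM) = \V_A(M)$.

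For the second inclusion, $I$-superficiality gives $(I^n \colon x) = I^{n-1}$ for $n \gg 0$, producing the exact sequence of $A$-modules
$$
0 \rt A/I^{n-1} \xrightarrow{\cdot x} A/I^n \rt B/J^n \rt 0,
$$
whose cokernel is $A/(I^n + (x)) = B/J^n$. Property~(7) of \ref{prop-v} then gives $\V_A(B/J^n) \subseteq \V_A(A/I^{n-1}) \cup \V_A(A/I^n) = \V_A^\infty(I)$ for $n \gg 0$. Next, for an arbitrary $B$-module $N$ I will compare $\V_A(N)$ with $\V_B(N)$ in the common $k^c$ supplied by the Cohen presentations $A = Q/(\bff)$ and $B = (Q/(\tilde x))/(\ov{\bff})$ (available precisely because $x \notin \m^2$, so $\tilde x$ is a regular parameter of $Q$): a minimal $A$-resolution of $N$ is obtained from a minimal $B$-resolution of $N$ by replacing each $B^r$ in it by the two-term complex $A^r \xrightarrow{\cdot x} A^r$ and totalizing the resulting bicomplex, producing $\Ext^*_A(N,k) \cong \Ext^*_B(N,k) \oplus \Ext^{*-1}_B(N,k)$ as graded $k[t_1, \ldots, t_c]$-modules with the Eisenbud operators from $\bff$ and $\ov{\bff}$ acting compatibly, so $\V_A(N) = \V_B(N)$. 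Applying this with $N = B/J^n$ for $n \gg 0$ gives $\V_B(B/J^n) \subseteq \V_A^\infty(I)$, whence $\V_B^\infty(J) \subseteq \V_A^\infty(I)$.

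Combining, $\V_B^\infty(J) \cap \V_B(M/xM) \subseteq \V_A^\infty(I) \cap \V_A(M)$, which has dimension $\leq 1$ by hypothesis. The hard step is the change-of-rings identification $\V_A(N) = \V_B(N)$ for $B$-modules $N$: while the Betti-number doubling $\beta^A_n(N) = \beta^B_n(N) + \beta^B_{n-1}(N)$ is a routine mapping-cone calculation, verifying that the Eisenbud operators on the totalized $A$-resolution are induced compatibly by those on the minimal $B$-resolution---so that the annihilators in $k[t_1, \ldots, t_c]$ genuinely coincide---requires careful tracking of the lifts to $Q$ and of the higher homotopy corrections making the totalization into a genuine $A$-complex. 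The hypothesis $x \notin \m^2$ is essential here, since otherwise the codimensions of $A$ and $B$ would differ and the two support varieties would live in different ambient spaces, making the comparison meaningless.
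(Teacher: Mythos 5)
Your approach is genuinely different from the paper's, but it has a fatal gap at its foundation: you require $x$ to lie in $I$ (it must, since $x$ is to be superficial with respect to $I$ and the conclusion concerns $I/(x)$) \emph{and} to satisfy $x \notin \m^2$, and you yourself flag the latter as "essential" because your whole strategy is to compare $\V_A(-)$ and $\V_{A/(x)}(-)$ inside a common ambient $k^c$ via codimension-preserving Cohen presentations. These two requirements are incompatible whenever $I \subseteq \m^2$ (e.g.\ $I = \m^2$, or $I = \ov{\n}J$ as in the examples of section fifteen): then every element of $I$ lies in $\m^2$, $B = A/(x)$ is a complete intersection of codimension $c+1$, the support varieties of $A$-modules and $B$-modules live in ambient spaces of different dimensions, and the Betti-number relation $\beta^A_n(N) = \beta^B_n(N) + \beta^B_{n-1}(N)$ you invoke is false (it would force $\cx_A k \geq \cx_B k = c+1$). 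Since the lemma must hold for an arbitrary $\m$-primary $I$, the argument does not go through. A secondary issue: even when $x \notin \m^2$ is available, the identification $\V_A(N) = \V_B(N)$ for an arbitrary $B$-module $N$ (which you need for $N = B/J^n$) is not Proposition~\ref{var-mod-x} --- that proposition treats $M/xM$ for an $A$-module $M$ on which $x$ is regular --- and the compatibility of Eisenbud operators under your totalization is only asserted, not proved.

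The paper's proof avoids the ambient-space comparison entirely and never needs $x \notin \m^2$. It chooses $x \in I$ generically so that, in addition to being $M_i \oplus A$-superficial for all $i$, the element $xv \in \R_1$ is filter-regular on the finitely generated bigraded $\R[t_1,\ldots,t_c]$-module $\bigoplus_{i,n} \Ext^i_A(M, A/I^n)$. This yields, for $i, n \gg 0$, short exact sequences
$$0 \rt \Ext^i_A(M, A/I^n) \rt \Ext^i_A(M, A/I^{n+1}) \rt \Ext^i_B(N, B/J^{n+1}) \rt 0,$$
so $\ell(\Ext^j_B(N, B/J^{n+1}))$ is a difference of two quantities that are each eventually $2$-periodic in $j$ (by the hypothesis $\dim \V_A(M) \cap \V_A(I^{n}) \leq 1$ and \ref{prop-v}(6)); hence $\Ext^*_B(N, B/J^{n+1}) \otimes k$ has bounded Hilbert function, and the numerical criterion \ref{prop-v}(6), applied over $B$'s \emph{own} ring of cohomology operators, gives $\dim \V_B(N, J^{n+1}) \leq 1$ for $n \gg 0$. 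This is presentation-independent and insensitive to the codimension of $B$. If you want to repair your argument you would need to replace the change-of-rings identification by a comparison of this numerical type.
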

\begin{proof}
Fix $n \geq 0$. Set $M_n = \Omega^n_A(M)$. Let $V = I/\m I$. We note that there is proper Zariski closed linear subspace $U_n$ of $V$ such that if the image of $x$ is in $V \setminus U$ then $x$ is $M_n \oplus A$-superficial \wrt \ $I$.

Let $\R = A[Iv]$ be the Rees algebra of $I$.
Fix $i \geq 1$. Set $$E_i = \bigoplus_{n \geq 0}\Ext^{i}_A(M, A/I^n).$$ We note that by \ref{red-rees} that $E_i$ is a finitely generated $\R$-module. So there exists a proper closed linear subspace $Z_i$ of $V$ such that if $x \in V \setminus Z_i$ then $xv$ is $E_i$-filter-regular.

As $k$ is uncountable we have
$$W =  V \setminus \left( (\bigcup_{n \geq 0}U_n )\bigcup (\bigcup_{i \geq 1} Z_i)\right) \neq \emptyset.$$
So if the image of $x$ is in $W$ then $x$ is $M_n \oplus A$-superficial \wrt \ $I$ for all $n \geq 0$ and $xv$ is $E_i$-filter regular for all $i \geq 1$.

We note that $D = \bigoplus_{i \geq 1}E_i$ is a finitely generated bigraded $ S = \R[t_1, \ldots, t_c]$ module. So $T = (0 \colon_D xv)$ is finitely generated $S$-module. Given $i$ there exists $n(i)$ such that for $n \geq n(i)$ we get $T_{in} = 0$. As $T$ is finitely generated there exists $i_0$ and $n_0$ such that $T_{in} = 0$ for all $i \geq i_0$ and $n \geq n_0$.

Let $x \in W$.
Set $A = A/(x)$ and $J = I/(x)$. Also set $N = M/xM$.
As $x$ is $A$-superficial \wrt \ $I$ we have $(I^{n+1} \colon x) = I^n$ for all $n \gg 0$; say from $n \geq m$.
For $n \geq m$ there is an exact sequence
$$ 0 \rt A/I^n \xrightarrow{\alpha_n} A/I^{n+1} \rt B/J^{n+1} \rt 0, $$
where $\alpha_n(a + I^n) = xa + I^{n+1}$.
It follows that for $i \geq i_0$ and $n \geq \max\{ m, n_0\} + 1$ there exists exact sequence
\[
0 \rt \Ext^i_A(M, A/I^n) \rt \Ext^i_A(M, A/I^{n+1}) \rt \Ext^i_A(N, B/J^{n+1}) \rt 0.
\]
So we have for $i \geq i_0$ and $n \geq \max\{ m, n_0\} + 1$;
$$ \ell(\Ext^i_A(N, B/J^{n+1})) = \ell(\Ext^i_A(M, A/I^{n+1})) - \ell(\Ext^i_A(M, A/I^{n}) ).$$

Say $\V^\infty_A(I) = \V_A(I^n)$ for $n \geq s$. Fix $n \geq \max\{s, m, n_0 \} + 1$. Then as $\V_A(M)\cap \V_A(I^n)$ has dimension $\leq 1$ it follows that there exists an isomorphism
$$\Ext^j_A(M, A/I^n) \cong \Ext^{j +2}_A(M, A/I^n)  \quad \text{for all} \ j \geq j(n).$$
 Consider $j \geq \max\{i_0 + 2, j(n), j(n+1) \}$. We have
$$ \ell(\Ext^j_A(N, B/J^{n+1})) =  \ell(\Ext^{j + 2}_A(N, B/J^{n+1})).$$
It follows that the  $\Ext^*_B(N, B/J^n)\otimes k $ has bounded Hilbert function. It follows that $\V_B(J^{n+1}) \cap \V_B(N) = \V_B(N, J^{n+1})$
has dimension $\leq 1$ over an appropriate ring of cohomological operators. The result follows.
\end{proof}
We now give
\begin{proof}[Proof of Theorem \ref{special-cx1}]
By \ref{bc} we may assume that the residue field of $A$ is uncountable and algebraically closed. The proof now follows on the same lines of proof of Theorem \ref{special} by inducting on dimension of $\dim A$ and using Theorem \ref{first-cx1}. We have to use \ref{mod-cx1} which states that there exists $x \in I$ which is $\Omega_A^i(M) \oplus A$-superficial for all $i$ \wrt \ $I$ such that $$\dim \V_{A/(x)}^\infty(I/(x)) \cap \V_{A/(x)}(M/xM) \leq 1.$$
\end{proof}

\section{Proof of Theorem \ref{cx-mcm}}
\begin{proof}[Proof of Theorem \ref{cx-mcm}]
For $ i \leq 0$ we have short exact sequences
\[
0 \rt \Om^i(M) \rt H_i \rt \Om^{i-1}(M) \rt 0 \quad \text{where $H_i$ is free $A$-module.}
\]
We note that $\V_A(\Om^i(M)) \cap \V_A^{tot}(I) = \{ 0 \}$ for all $i \leq 0$. As $\Om^{i-1}(M)$ is MCM $A$-module we get that $\Tor^A_1(\Om^{i-1}(M), A/I^n) = 0$ for all $n \geq 1$.
So we have an exact ssequence
$$ 0 \rt L^I(\Om^i(M)) \rt L^I(H_i) \rt L^I(\Om^{i-1}(M)) \rt 0.$$
Set $b_i(-) = b_i^I(-)$.
So we have $b_0(\Om^i(M)) \leq b_0(A)$ for all $i \leq 0$. Taking local cohomology we have
$$ b_1(\Om^i(M)) \leq \max \{b_1(A), b_0(\Om^{i-1}(A) \} \leq \max \{b_1(A), b_0(A) \}. $$
Again we have
$$b_2(\Om^i(M)) \leq \max \{b_2(A), b_1(\Om^{i-1}(A) \} \leq \max \{b_2(A), b_1(A), b_0(A) \}.$$
Iterating we have
$$b_j(\Om^i(M)) \leq \max \{b_j(A), \cdots, b_1(A), b_0(A) \} \quad \text{for $j = 0, \ldots, d-1$.}$$
It follows that
$$\reg L^I(\Om^i(M)) \leq d-1 + \max\{b_{d-1}(A), \cdots, b_1(A), b_0(A) \}. $$
The result follows from \ref{compare} by taking $i = 0$.
\end{proof}

\section{examples}
In this section we give bountiful examples to illustrate our results.

\s\label{av-ext} We will need the following result from \cite[10.3.8]{A-Inf}. Let $I$ be an $\m$-primary ideal in a complete intersection $R$ of codimension $t$ and dimension $d \geq 1$. Then $\m I$ is an $\m$-primary ideal of complexity $t$.

\s\label{set-ex} \emph{Setup:} Let $A$ be a complete, complete intersection of codimension $c$ and dimension $d \geq 1$. We also assume the residue field of $A$ is algebraically closed. Assume $A$ has a Noether normalization $R$. This is automatic if $A$ is equi-characteristic. In mixed characteristic the result holds when $A$ is a domain. Then $(R,\n)$ is a regular local ring of dimension $d$.

\s \label{basic} (with hypotheses as in \ref{set-ex}). Assume $d \geq r + 1$ where $r \geq 1$. Let $\bff = f_1, \ldots, f_r \in \n^2$ be a $R$-regular sequence. Then $\bff$ is an $A$-regular sequence
Set $S = R/(\bff)$ and $B = A/(\bff)$. Then $B$ is a finite flat extension of $S$. Let $J$ be an $\ov{\n}$-primary ideal in $S$. Set $I = \ov{\n}J$. Then note by \ref{av-ext} the complexity of $I^n$ is $r$ for all $n \geq 1$. Fix $n$. Let $\F$ be a minimal resolution of $I^n$ as $S$-modules. Then $\F \otimes B$ is a minimal resolution of $I^nB$. Thus $\cx_B I^nB = r$.

\s \label{mon} Let $T = k[[X_1,\ldots, X_d, Z_1, \ldots, Z_c]]/(Z_1^{a_1}, \ldots, Z_c^{a_c})$ where $a_1, \ldots, a_c \geq 2$ and $d \geq 1$. Then for any $i$ with $0 \leq i \leq c$ there exists $\m$-primary  ideals  $I_i$ in $T$ with complexity $I_i^n$ is equal to $i$ for all $n \geq 1$.

Let $\m$ be the maximal ideal of $T$. Then $\cx_T \m^n = c$ for all $n \geq 1$, see \ref{av-ext}.

We have $R = k[[X_1, \ldots, x_d]]$ is a Noether normalization of $T$ with maximal ideal $\n = (X_1, \ldots, X_d)$. Let $I$ be any $\n$-primary ideal of $R$. Then note that as $R$ is regular local the projective dimension of $I^n$ is $d$ for all $n \geq 1$.  As $T$ is flat $R$-module it follows that ideals $I^nT = (IT)^n$ in $T$ has finite projective dimension for all $n \geq 1$.

Now let $1\leq i \leq c -1$. Then $W_i = k[[X_1, \ldots, X_d, Z_1, \ldots, Z_i]]$ is a Noether normalization of $k[[X_1,\ldots, X_d, Z_1, \ldots, Z_c]]/(Z_{i+1}^{a_1}, \ldots, Z_c^{a_c})$.
It follows that $T$ is a flat extension of $S_i = W_i/(Z_1^{a_1}, \ldots, Z_{i}^{a_i})$. By an argument similar to \ref{basic} there exists ideals $I$ in $T$ such that  $\cx_T I^n = i$ in $T$ for all $n \geq 1$.
\begin{remark}
  The constructions \ref{basic} and \ref{mon} can be combined to construct bountiful examples of complete intersections $B$ having ideals $I^n$ such that $\cx_B I^n < \codim B$ for all $n$.
  However it is not known that if $A$ is a complete intersection of codimension $c$ then for any $i$ with $1 \leq i \leq c -1$ there exists ideals $I_i$ such that $\cx_A I_i^n = i$ for all $n \geq 1$.
\end{remark}

In the next examples we take support varieties in $\Pb^{c-1}(k)$, see \ref{proj}.
\begin{example}\label{e1}
(with setup as in \ref{set-ex}). Suppose $I$ be an $\m$-primary ideal  in $A$ such that $\cx I^n = r$ for all $n \geq 1$. Note $X = \Zc_A^\infty(I)$  projective set corresponding to $ \V^\infty_A(I)$ is a $r - 1$-dimensional variety in $\Pb^{c-1}$.
We note that for each algebraic set $Y$ in $\Pb^{c-1}$ there exists a MCM $A$-module $M_Y$ such that $\Zc_A(M_Y) = Y$, see \cite[2.3]{B}.
 The following examples yield  examples of MCM modules $M$ with $\Zc^\infty_A(I) \cap \Zc_A(M) = \emptyset$ (equivalently $\V_A^\infty(I)\cap V_A(M) = \{ 0 \}$).
\begin{enumerate}
  \item If $c > r$ then there are $|k|$ points in $\Pb^{c-1} \setminus X$. For each of these points $a$ there exists an MCM $A$-module $M_a$ of with support variety $\{a \}$. Note $\cx M_a = 1$

  \item If $c > r + 1$ then there are $|k|$ lines in $\Pb^{c-1} \setminus X$, see \ref{ag-pt}. For each of these lines $\ell$ there exists an MCM $A$-module $M_\ell$ of with support variety $\ell$. Note $\cx M_\ell = 2$.

  \item If $c > r + 2$ then there are $|k|$ planes in $\Pb^{c-1} \setminus X$, see \ref{ag-plane}. For each of these planes $K$ there exists an MCM $A$-module $M_K$ of with support variety $K$. Note $\cx M_K = 3$.
  \item If $r = 1$ then a general hyperplane $H$ of $\Pb^{c-1}$ will not contain any of the points in $\V^\infty_A(I)$. Let $M_H$ be the MCM module with support variety $H$.
\end{enumerate}
 The following examples yield  examples of MCM modules $M$ with $\dim \Zc^\infty_A(I) \cap \Zc_A(M) = 0$ (equivalently $\dim \V_A^\infty(I)\cap V_A(M) = 1$).
\begin{enumerate}
  \item If $c > r$ then there are $|k|$ points in $\Pb^{c-1} \setminus X$. Fix a point $p \in X$. For each of the points $a \in \Pb^{c-1} \setminus X$ let $\ell_a$ be a line joining $a$ to point
$p$. Let $M_{\ell_a}$ be the MCM $A$-module with support variety $\ell_a$. Then note $\Zc^\infty_A(I) \cap \Zc_A(M_{\ell_a})$ is a finite set of points.
\item If $r = 2$ then a general hyperplane $H$ will intersect $X$ in only finitely many points. Let $M_H$ be the MCM module with support variety $H$.
\end{enumerate}
\end{example}
\section{Proof of Theorem \ref{sunif}}
In this section we give
\begin{proof}[Proof of Theorem \ref{sunif}]
We may assume that the residue field of $A$ is infinite. Set $M_i = \Omega^i_A(M)$ for $i \geq 0$. For $i \geq 1$ consider the filtration $\Hc(i) = \{ I^n\F_{i-1} \cap M_i \}_{n \geq 0}$. We note that $\Hc(i)$ is an $I$-stable filtration on $M_i$ and we have an exact sequence of $G_I(A)$-modules:
\[
0 \rt G_{\Hc(i)}(M_i) \rt G_I(F_{i-1}) \rt G_I(M_{i-1}) \rt 0.
\]
It follows that there exists $h$ such that $\reg G_{\Hc(i)}(M_i) \leq h$ for all $i \geq 1$.
We show that for all $n \geq h + 1$ we have $I^{n+1}\F_{i-1} \cap M_i = I(I^n\F_i \cap M_i)$. This will prove the result.

Fix $i \geq 1$. Set $N = M_i$ and $\Hc = \Hc(i)$ and $G = G_I(A)$. Also set $N_n = \Hc_n = I^n\F_{i-1} \cap N$ for $n \geq 0$. Let $x_1^*, \ldots, x_d^* \in  G_1$ be $G_\Hc(N)$-filter regular. By \cite[18.3.11]{BS}   we get that
\[
\reg G_\Hc(N)/\mathbf{x}^* G_\Hc(N) \leq h.
\]
So for $n \geq h+1$ we have $N_{n} = IN_{n-1} + N_{n+1}$.
But $N_{n+1} = IN_{n} + N_{n+2}$. So $N_n = IN_{n-1} + N_{n+2}$. Iterating we get that for all $r \geq n + 1$ we have $N_n = IN_{n-1} + N_r$. But for large $r$ we have $N_r = IN_{r-1}$. So we get
$N_n = IN_n$. The result follows.
\end{proof}
\section{Appendix}
In this section we prove some results in elementary algebraic geometry that we need. We believe these results are already known. However we do not have a reference. So we give proofs.
Throughout $k$ is an algebraically closed field, $X$ is a closed subvariety of $\Pb^n(k)$. We assume $\dim X = r$.
\begin{theorem}
\label{ag-pt}(with hypotheses as above)  If $n \geq r + 2$ then there exists $|k|$-distinct lines $L$ in $\Pb^n(k)$ such that  $L \cap X = \emptyset$.
\end{theorem}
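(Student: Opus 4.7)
The plan is to reduce to a dimension count via projection from a general point. Since $\dim X = r < n$, the subvariety $X$ is proper in $\Pb^n(k)$, so we can pick a point $p_0 \in \Pb^n(k) \setminus X$. Consider the linear projection from $p_0$, that is, the morphism $\pi \colon \Pb^n(k) \setminus \{p_0\} \rt \Pb^{n-1}(k)$. Because $p_0 \notin X$, the restriction $\pi|_X$ is a well-defined morphism of varieties, and hence the image $\pi(X)$ is a constructible set whose closure $Y \subseteq \Pb^{n-1}(k)$ satisfies $\dim Y \leq \dim X = r$. Since we assume $n \geq r + 2$, we have $\dim Y \leq r \leq n - 2 < n - 1$, so $Y$ is a proper closed subvariety of $\Pb^{n-1}(k)$, and hence $U := \Pb^{n-1}(k) \setminus Y$ is a non-empty Zariski-open subset of the irreducible variety $\Pb^{n-1}(k)$.

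For each $q \in U$, let $L_q$ denote the unique projective line in $\Pb^n(k)$ through $p_0$ whose fibre $\pi^{-1}(q)$ equals $L_q \setminus \{p_0\}$; equivalently, $L_q$ is the line joining $p_0$ to any lift of $q$. The assignment $q \mapsto L_q$ is injective because distinct fibres of $\pi$ lie on distinct lines through $p_0$. Moreover $L_q \cap X = \emptyset$: indeed, if some $x \in L_q \cap X$ existed, then $x \neq p_0$ (as $p_0 \notin X$), and so $\pi(x) = q$ would lie in $\pi(X) \subseteq Y$, contradicting $q \in U$.

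It remains to verify that $|U| = |k|$. Since $k$ is algebraically closed (hence infinite) and $U$ is a non-empty open subset of the irreducible variety $\Pb^{n-1}(k)$ of positive dimension $n - 1 \geq 1$, a standard cardinality fact for open subsets of irreducible varieties over infinite algebraically closed fields yields $|U| = |k|$. One can justify this concretely by intersecting $U$ with a suitable affine chart $\mathbb{A}^{n-1}(k) \subseteq \Pb^{n-1}(k)$ and restricting to a line in $\mathbb{A}^{n-1}(k)$ that meets $U$ (such a line exists because $U$ is open and non-empty); on this line the complement of $U$ is a proper Zariski-closed subset and hence finite, while the line itself has $|k|$ points.

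The only mildly subtle step is this final cardinality count; the geometric heart of the proof is the dimension inequality $\dim \pi(X) \leq r \leq n - 2$, which forces the set of lines through $p_0$ meeting $X$ to form a proper closed subset of the $(n-1)$-dimensional family of lines through $p_0$.
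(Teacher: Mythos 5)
Your proposal is correct and follows essentially the same route as the paper: project from a point $p_0\notin X$ to $\Pb^{n-1}$, use $\dim\pi(X)\le r\le n-2$ to see the image is a proper closed subset, and take the lines over the complement. Your extra care with the constructibility of $\pi(X)$, the fact that $p_0\notin X$ makes the whole line (not just the fibre) disjoint from $X$, and the cardinality count are welcome but do not change the argument.
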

\begin{proof}
Let $p \in \Pb^n \setminus X$. Consider the projection $\pi \colon \Pb^n \setminus \{p\} \rt \Pb^{n-1}$. We have $\pi$ is a morphism of quasi-projective varieties. It is elementary to note that $\dim \pi(X) \leq \dim X$ (use the fact that if $V$ is an irreducible component of $X$ then  $\pi_V \colon V \rt \pi(V)$ is a dominant map).
As $n-1 \geq r +1$ the set $\pi(X)$ is a proper subvariety of $\Pb^{n-1}$. So $\Pb^{n-1} \setminus \pi(X)$ has cardinality $|k|$. The map $\pi$ is surjective and the inverse image of each point in $\Pb^{n-1} \setminus \pi(X)$ is a line in $\Pb^n$ not intersecting $X$.
\end{proof}
Next we show
\begin{theorem}
\label{ag-plane}(with hypotheses as above)  If $n \geq r + 3$ then there exists $|k|$-distinct planes $L$ in $\Pb^n(k)$ such that  $L \cap X = \emptyset$.
\end{theorem}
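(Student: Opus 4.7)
The plan is to mirror the proof of Theorem~\ref{ag-pt} by reducing one dimension via projection from a point, then invoking Theorem~\ref{ag-pt} itself to produce the required lines in $\Pb^{n-1}$ which lift to planes in $\Pb^n$.

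More precisely, since $\dim X = r < n$, I would first choose $p \in \Pb^n \setminus X$ and consider the linear projection $\pi \colon \Pb^n \setminus \{p\} \rt \Pb^{n-1}$. As observed in the proof of Theorem~\ref{ag-pt}, we have $\dim \overline{\pi(X)} \leq \dim X = r$. Because $n - 1 \geq r + 2$, the hypothesis of Theorem~\ref{ag-pt} is satisfied for the closed subvariety $\overline{\pi(X)} \subseteq \Pb^{n-1}$, so there exist $|k|$-distinct lines $L' \subseteq \Pb^{n-1}$ with $L' \cap \overline{\pi(X)} = \emptyset$.

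For each such line $L'$, I would set $L := \pi^{-1}(L') \cup \{p\} \subseteq \Pb^n$. Since $\pi$ is linear projection from $p$, the preimage of a linear $\Pb^1$ together with $p$ is a linear $\Pb^2$ (the cone over $L'$ with vertex $p$); thus $L$ is a plane. Distinct lines $L'$ yield distinct planes $L$, because $L$ determines $L'$ via $L' = \pi(L \setminus \{p\})$. To check disjointness from $X$: $p \notin X$ by choice, and if $q \in (L \setminus \{p\}) \cap X$ then $\pi(q) \in L' \cap \pi(X) \subseteq L' \cap \overline{\pi(X)} = \emptyset$, a contradiction. Hence $L \cap X = \emptyset$ for each of the $|k|$ lines $L'$, producing $|k|$-distinct planes as required.

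The main step is recognising that $\pi^{-1}(L') \cup \{p\}$ is a genuine linear $2$-plane and that the assignment $L' \mapsto L$ is injective; once this is in place, the theorem reduces immediately to Theorem~\ref{ag-pt} in one lower dimension. There is no serious obstacle beyond the bookkeeping about linear subspaces under projection, which is standard.
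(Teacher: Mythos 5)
Your argument is correct, and it reaches the result by a genuinely different (though closely related) decomposition than the paper's. You project from a single point $p \notin X$ onto $\Pb^{n-1}$ and then invoke Theorem \ref{ag-pt} \emph{in the target} to find $|k|$ lines avoiding $\overline{\pi(X)}$; each plane is then the cone $\Sp(\{p\} \cup L')$ with vertex $p$, so all of your planes share the common point $p$. The paper instead invokes Theorem \ref{ag-pt} \emph{first}, in $\Pb^n$ itself, to produce one line $\ell$ disjoint from $X$, then projects from the center $\ell$ onto a complementary $\Pb^{n-2}$ and takes the planes $\Sp(\ell \cup \{a\})$ for $a$ outside $\pi(X)$; all of the paper's planes share the common line $\ell$. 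The two routes are mirror images of one another (a $2$-plane is built either as point-join-line or as line-join-point), and both rest on the same two facts: projection from a disjoint linear center does not raise the dimension of $X$, and the complement of a proper subvariety of a projective space has cardinality $|k|$. Your version has the advantage of being a clean one-step induction on the dimension of the linear subspace sought, reusing Theorem \ref{ag-pt} as a black box; the paper's version needs the slightly less standard projection from a positive-dimensional center but produces the planes in a single pass. The points you flag as requiring verification --- that $\pi^{-1}(L') \cup \{p\}$ is a genuine linear $2$-plane and that $L' \mapsto L$ is injective --- are exactly the right ones, and both are standard; note also that since $X$ is projective, $\pi(X)$ is already closed, so the closure in your argument is harmless.
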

\begin{proof}
By \ref{ag-pt} we can choose a line $\ell$ in $\Pb^n$ not intersecting $X$. We may choose co-coordinates such that $\ell$ is given by $x_2 = x_3 = \cdots = x_n = 0$. Consider  $M = \Pb^{n-2}$ defined by $x_0 = x_1 = 0$. Consider the projection map $\pi \colon \Pb^n \setminus \ell \rt M = \Pb^{n-2}$.
 Recall $\pi(q) = \Sp(\ell \cup \{q\}) \cap M$.  Note $\pi$ is a morphism of quasi-projective varieties, see \cite[exercise M, p. 103.]{M}.
By an argument similar to that in \ref{ag-pt} we get that $\dim \pi(X) \leq r$. As $n-2 \geq r +1$ it follows that $\pi(X) $ is a proper subvariety in $M$. So $\Pb^{n-1} \setminus \pi(X)$ has cardinality $|k|$. The map $\pi$ is surjective. Let  $$a \in \left(\Pb^{n-1} \setminus \pi(X)  \right).$$
 Then  $E_a = \Sp(\ell \cup \{a \})$ is a plane in $\Pb^n$. It is easily verified that  $E_a \cap X = \emptyset$. It is evident that for distinct $a$ the planes $E_a$ are distinct. The result follows.
\end{proof}


\begin{thebibliography} {99}

\bibitem{AHS}
I.~M.~Aberbach, A.~Hosry and J.~Striuli,
\emph{Uniform Artin-Rees bounds for syzygies},
Adv. Math. 285 (2015), 478--496.



\bibitem{A-Inf}
\bysame,
 \emph{Infinite free resolutions},
  Six lectures on commutative algebra (Bellaterra, 1996), 1–118, Progr. Math., 166, Birkhäuser, Basel, 1998.





\bibitem{AB}
L.~L.~Avramov and R-O.~Buchweitz,
\emph{Support varieties and cohomology over complete intersections},
Invent. Math.   142 (2000), no. 2, 285--318.

\bibitem{B}
P.~A.~Bergh,
 \emph{On support varieties for modules over complete intersections},
  Proc. Amer. Math. Soc. 135 (12), 3795--3803 (2007)

\bibitem{BJM}
P.~A.~Bergh, D.~A.~Jorgensen and  W.~F.~Moore,
\emph{Totally acyclic approximations},
Appl. Categ. Structures   29 (2021), no. 4, 729--745.


\bibitem{BourIX}
N.~Bourbaki, \emph{Algebr\'{e} commutative. ix: Anneaux locaux r\'{e}guliers
  complets}, Masson, Paris, 1983.

\bibitem{BS}
M.~P.~Brodmann and R.~Y.~Sharp,
\emph{Local cohomology},
An algebraic introduction with geometric applications. Second edition,
Cambridge Stud. Adv. Math., 136
Cambridge University Press, Cambridge, 2013.

\bibitem {BH}  W. Bruns and J. Herzog,
\emph{Cohen-Macaulay Rings}, revised edition,
Cambridge Studies in Advanced Mathematics, 39.
Cambridge University Press, 1998.


\bibitem{Buchw}
          R.-O. Buchweitz,
		\emph{Maximal {C}ohen-{M}acaulay modules and {T}ate cohomology over {G}orenstein rings},
Mathematical Surveys and Monographs, 262. American Mathematical Society, Providence, RI, 2021.

\bibitem{CI}
J.~F.~Carlson and S.~B.~Iyengar,
\emph{Thick subcategories of the bounded derived category of a finite group},
Trans. Amer. Math. Soc.   367 (2015), no. 4, 2703--2717.


\bibitem{Eis}
D.~Eisenbud, \emph{Homological algebra on a complete intersection, with an
  application to group representations}, Trans. Amer. Math. Soc. \textbf{260}
  (1980), 35--64.




\bibitem{EHu}
D.~Eisenbud and C.~Huneke,
\emph{A finiteness property of infinite resolutions},
J. Pure Appl. Algebra 201 (2005),  284–-294.

\bibitem{Gull}
T.~H. Gulliksen, \emph{A change of ring theorem with applications to
  {P}oincar\'e series and intersection multiplicity}, Math. Scand. \textbf{34}
  (1974), 167--183.


\bibitem{K}
K.~Kato,
\emph{Syzygies of modules with positive codimension},
J. Algebra   318 (2007), no. 1, 25--36.

\bibitem{M}
R.~Miranda,
\emph{Algebraic curves and Riemann surfaces},
Grad. Stud. Math., 5
American Mathematical Society, Providence, RI, 1995.

\bibitem{Pu5}
T.~J.~Puthenpurakal,
\emph{Ratliff-{R}ush filtration, regularity and depth of higher associated graded modules. {I}},
   J. Pure Appl. Algebra \textbf{208} (2007), no.~1, 159--176.


\bibitem{P}
 \bysame,
\emph{On the finite generation of a family of Ext modules},
Pacific J. Math.   266 (2013), no. 2, 367--389.

\bibitem{Pci}
 \bysame,
\emph{Growth of Hilbert coefficients of syzygy modules},
J. Algebra 482 (2017), 131--158.

\bibitem{PZ}
T.~J.~Puthenpurakal and F.~Zulfeqarr,
\emph{The dual Hilbert-Samuel function of a maximal Cohen-Macaulay module},
Comm. Algebra   43 (2015), no. 7, 2763--2784.

\bibitem{S}
J.~Striuli,
\emph{A uniform Artin–Rees property for syzygies in rings of dimension one and two},
 J. Pure Appl. Algebra 210 (2007) 577--588,


\bibitem{Theo}
E.~Theodorescu,
\emph{Derived functors and Hilbert polynomials},
Math. Proc. Camb. Phil. Soc. 132 (2002), 75--88.

\bibitem{W}
E.~West,
\emph{Primes associated to multigraded modules},
J. Algebra 271:2 (2004), 427--453.




\end{thebibliography}
\end{document}